\begin{document}
\setlength{\parindent}{1.5em}
\theoremstyle{plain}
\newtheorem{theorem}{Theorem}[section]
\newtheorem{definition}{Definition}[section]
\newtheorem{lemma}{Lemma}[section]
\newtheorem{Example}{Example}[section]
\newtheorem{cor}{Corollary}[section]
\newtheorem{remark}{Remark}[section]
\newtheorem{property}{Property}[section]
\newtheorem{proposition}{Proposition}[section]


\title{Formation of singularities in one-dimensional Chaplygin gas}
\maketitle
 \centerline{De-Xing Kong$^{a} $\quad Changhua Wei$^{b,*}$ \quad Qiang Zhang $^{c}$}
\begin{center}
$^*\!${\it {\small Corresponding author }}\\
$^a\!${\it {\small  Department of Mathematics, Zhejiang University, Hangzhou 310027, China}}\\
{\it  {\small E-mail address: dkong@zju.edu.cn}}\\
$^{b}\!${\it {\small Department of Mathematics, Zhejiang University,
Hangzhou 310027, China}}\\
{\it  {\small E-mail address: wch\_19861125@163.com}}\\
$^c\!${\it {\small  Department of Mathematics, City University of Hong Kong, China}}\\
{\it  {\small E-mail address: mazq@cityu.edu.hk}}\\

\end{center}

        \begin{abstract}
        In this paper we investigate the formation and propagation of singularities for the system for one-dimensional Chaplygin
        gas. In particular, under suitable assumptions we construct a physical solution with a new type of singularities
        called ``Delta-like" solution for this kind of quasilinear hyperbolic system with linearly degenerate characteristics.
        By careful analysis, we study the behavior of the solution in a neighborhood of a blowup point. The formation of
        this new kind of singularities is due to the envelope of the different families of characteristics instead of the
        same family of characteristics in the traditional situation. This shows that the blowup phenomenon of solution
        for the system with linearly degenerate characteristics is quite different from the situation of shock formation
        for the system with genuinely nonlinear characteristics. Different initial data can
        lead to kinds of different Delta-like singularities: the Delta-like singularity with point-shape and Delta-like singularity with line-shape.

        \vskip 6mm
        \noindent{\bf Key words and phrases:}  System for Chaplygin gas, linearly degenerate characteristic,
        blowup, singularity, Delta-like singularity.

        \vskip 3mm

        \noindent{\bf 2000 Mathematics Subject Classification}: 35L45, 35L67, 76N15.
        \end{abstract}

\baselineskip=7mm

  \section{Introduction}
As we know, smooth solutions of nonlinear hyperbolic
systems generally exist in finite time even if initial data is
sufficiently smooth and small. After this time, only weak solutions
can be defined. Therefore, the following questions arise naturally:\\
(I) When and where do the solutions blow up?\\
(II) What quantities blow up? how do they blow up?\\
(III) What kinds of singularities appear? How do the
singularities propagate?

\noindent  These questions are very important in mathematics and
physics. For questions (I) and (II), some methods were established
and many results were obtained (see \cite{6}, \cite{9}, \cite{13},
\cite{14}). As for question (III), since this kind of nonlinear
phenomena is too complicated, up to now, only a few results on
shock formation are known. For a single conservation law, these
questions can be solved well by the usual characteristic method (see
\cite{13}). For the $p$-system, Lebaud \cite{12} investigates the
problem of shock formation from the simple waves, namely, under the
hypothesis that one Riemann invariant keeps constant. Kong \cite{7} studies the formation and propagation of
singularities (in particular, the shock formation) for $2\times 2$
quasilinear hyperbolic systems with genuinely nonlinear
characteristics. For more complete introduction of the blowup for nonlinear hyperbolic equations, one can refer to Alinhac \cite{A}. Recently, Christodoulou \cite{Ch1} considers the relativistic Euler equations for a
perfect fluid with an arbitrary equation of state and under certain smallness assumptions on the
size of initial data, he obtains a remarkable and complete picture of the formation of shock waves in three dimensions
(one can also refer to Christodoulou and Miao \cite{Ch2}).

Hyperbolic systems with linearly degenerate characteristics play an
important role in mathematics and physics. For example, many
important equations arising from geometry and physics can be reduced to
this class of PDEs. The typical examples include the equations for
extremal time-like surfaces in Minkowski space, the Born-Infeld
equation in nonlinear theory of the electromagnetic field, the
system for Chaplygin gas (see \cite{1}, \cite{10} and
\cite{11}). However, up to now, for hyperbolic systems with linearly
degenerate characteristics, most of results are on the global
existence of solutions. Only a few results on formation of
singularities are known. Recently, Eggers and Hoppe \cite{3}
investigates the Born-Infeld equation, derives self-similar string solutions in a graph representation near the point of singularity formation and investigates the formation of a swallowtail
singularity. In this paper, we study the formation and propagation of
singularities in one-dimensional Chaplygin gas. In particular,
we investigate the cusp-type singularities of solutions. By the same method, similar
results hold for the equations for extremal time-like surfaces in
Minkowski space and for the Born-Infeld equation.

In this paper, we present a systematic analysis of the formation of cusp-type singularities (see \cite{A}) arising from some special smooth initial data; in particular, we provide a complete description of the solution close to the blowup point. Furthermore, based on this, we introduce the concept of ``Delta-like'' solution and construct several ``Delta-like'' solutions with applications in practice. In fact, the ``Delta-like'' solution is a weak solution which satisfies the definition of weak solution in classical sense (see \cite{14}), however, at the blowup point, the density function of the gas is infinite, this phenomenon is due to the concentration of mass of the gas in finite interval.

The rest of the paper is organized as follows. In Section 2, we give some
preliminaries on quasilinear hyperbolic system with linearly degenerate
characteristics. In Section 3, using the method of characteristic coordinates and
the singularity theory of smooth mappings, we give a
detailed analysis on the formation of singularities. In Section 4,
we present a complete description of the solution in the
neighborhood of the blowup point. In Section 5, we construct a
physical solution containing a new kind of singularity called
``Delta-like" singularity after the blowup time. In Section 6, under different
assumptions on initial data, we construct several different weak
solutions with ``Delta-like'' singularities which are named the
``Delta-like singularity with point-shape'' and ``Delta-like singularity with line-shape'', respectively. Section 7 is for conclusion.

\section{Preliminaries}
In this section, we consider one-dimensional system of isentropic gas in Eulerian representation
$$
\left\{\begin{array}{l}
{\displaystyle \partial_t\rho+\partial_x(\rho u)=0 },\vspace{2mm}\\
{\displaystyle \partial_t (\rho u)+\partial_x(\rho u^2+p)=0}, \vspace{2mm}\\
\end{array}\right.
\eqno(2.1)
$$
where $t\in\mathbb R^{+}$ and $x\in\mathbb R$ stand for the time variable and spatial variable, respectively, while $ \rho=\rho(t, x)$ and
  $u=u(t, x)$ denote the density and the velocity, respectively, and
$ p(t, x)$ is the pressure which is a function of $ \rho$ given by
{$$ p=p_0-\frac{\mu^2}{\rho}.\eqno(2.2) $$\vspace{2mm}
Here $ p_0 $ and $ \mu $ are two positive constants. The system (2.1) with (2.2) describes the motion of
a perfect fluid characterized by the pressure-density relation (known as the Chaplygin or
von K\'{a}rm\'{a}n-Tsien pressure law). This endows the system a highly symmetric structure. This is evident if we adopt the local sound speed
$ c=c(t,x)=[p'(\rho)]^{1/2}$ and the usual mean velocity of the fluid $ u $ as dependent variables. In this case, the system reads
$$
\partial_{t}U+A(U)\partial_{x}U=0,
\eqno(2.3)
$$
where
\begin{displaymath}
U=\left(\begin{array}{c}c\\u
\end{array}
\right)\quad \rm{and}\quad
A(U)=\left(\begin{array}{cc}u&-c\\-c&u
\end{array}
\right).
\end{displaymath}
Obviously, the eigenvalues of $A(U)$ read
$$
 \lambda_{-}=u-c,\quad\lambda_{+}=u+c.
 \eqno(2.4)
$$
Moreover, it is easy to verify that $\lambda_{\pm}=\lambda_{\pm}(t,x)$ are Riemann invariants.
Under the Riemann invariants, the system (2.1) can be reduced to
 $$\left\{\begin{array}{l}
{\displaystyle \partial_t\lambda_{-}+\lambda_{+}\partial_x\lambda_{-}=0 },\vspace{2mm}\\
{\displaystyle \partial_t\lambda_{+}+\lambda_{-}\partial_x\lambda_{+}=0}. \vspace{2mm}\\
\end{array}\right.
\eqno(2.5)
$$

Consider the Cauchy problem for the system (2.1) with the following initial data
$$
t=0: \rho=\rho_{0}(x),\quad u=u_{0}(x),
\eqno(2.6)
$$
where $ \rho_{0}(x)$  and $ u_{0}(x)$  are two suitably smooth functions with bounded $ C^{2} $ norm.
For consistency, let
$$
\lambda_{\pm}(0,x)=\Lambda_{\pm}(x)\triangleq u_{0}(x)\pm\frac{\mu}{\rho_{0}(x)}.
\eqno(2.7)
$$
Thus, studying the system (2.1) with initial data (2.6) is equivalent to studying the system (2.5) with initial data (2.7) in the existence domain of classical solutions.

In the existence domain of the classical solution of (2.5), (2.7), we recall the definition of characteristics and denote two characteristics starting from $(0,\alpha)$ by
$$
x=x^{+}(t,\alpha),\quad x=x^{-}(t,\alpha),
$$
respectively, which satisfy
$$\left\{\begin{array}{l}
{\displaystyle \frac{dx^{+}(t,\alpha)}{dt}=\lambda_{-}(t,x^{+}(t,\alpha)) },\vspace{2mm}\\
{\displaystyle t=0:x^{+}(0,\alpha)=\alpha} \vspace{2mm}\\
\end{array}\right.
\eqno(2.8)
$$
and
$$\left\{\begin{array}{l}
{\displaystyle \frac{dx^{-}(t,\alpha)}{dt}=\lambda_{+}(t,x^{-}(t,\alpha))},\vspace{2mm}\\
{\displaystyle t=0:x^{-}(0,\alpha)=\alpha}, \vspace{2mm}\\
\end{array}\right.
\eqno(2.9)
$$ respectively.
Let $(\alpha, \beta)$ be the characteristic parameters defined as follows.
For any $(t,x)$ in the maximal domain of definition of a smooth solution , we define $\beta(t,x)$ by $\beta(t,x)=x^{+}(0;t,x)$ where $x^{+}(0;t,x)$ is the unique solution of the ODE
$$
\frac{df(s;t,x)}{ds}=\lambda_{-}(s,f(s;t,x)),
$$
with initial condition $f(t)=x$. $\alpha(t,x)$ is defined similarly. The geometric meaning of $\alpha(t,x)$ and $\beta(t,x)$ are shown in Figure 1.
\begin{lemma}By (2.5),
$\lambda_{+}(t,x)$ is constant along the curve $x=x^{+}(t,\alpha)$, while  $\lambda_{-}(t,x)$ is constant along the curve $x=x^{-}(t,\alpha)$.
\end{lemma}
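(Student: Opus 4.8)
The plan is to reduce the statement to a direct application of the chain rule along each characteristic, invoking the evolution equations (2.8)--(2.9) for the curves together with the reduced system (2.5). First I would fix the parameter $\alpha$ and consider, for the curve $x=x^{+}(t,\alpha)$, the single-variable function $t\mapsto\lambda_{+}(t,x^{+}(t,\alpha))$, which is well defined and differentiable throughout the maximal domain of the classical solution. Differentiating this composite function gives
\[
\frac{d}{dt}\lambda_{+}\bigl(t,x^{+}(t,\alpha)\bigr)=\partial_{t}\lambda_{+}+\frac{dx^{+}(t,\alpha)}{dt}\,\partial_{x}\lambda_{+},
\]
where the right-hand side is understood to be evaluated at the point $(t,x^{+}(t,\alpha))$.

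Next I would substitute the defining ODE (2.8), namely $dx^{+}/dt=\lambda_{-}$, so that the right-hand side becomes $\partial_{t}\lambda_{+}+\lambda_{-}\partial_{x}\lambda_{+}$. By the second equation of the reduced system (2.5) this quantity vanishes identically; hence $\lambda_{+}(t,x^{+}(t,\alpha))$ is independent of $t$, i.e. $\lambda_{+}$ is constant along $x=x^{+}(t,\alpha)$. The assertion for $\lambda_{-}$ is obtained by the same argument with the roles of the two families interchanged: differentiating $t\mapsto\lambda_{-}(t,x^{-}(t,\alpha))$, inserting $dx^{-}/dt=\lambda_{+}$ from (2.9), and applying the first equation of (2.5) shows that the corresponding total derivative is zero.

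I do not expect any genuine obstacle here, since the result is the standard statement that each Riemann invariant is transported along the characteristic associated with the opposite eigenvalue. The only points requiring a little care are bookkeeping ones: one must stay within the existence domain of the smooth solution, so that $\lambda_{\pm}$ are $C^{1}$ and the chain rule is legitimate, and one must respect the (slightly counterintuitive) naming convention in (2.8)--(2.9), whereby the curve labelled $x^{+}$ propagates with speed $\lambda_{-}$ and therefore carries the invariant $\lambda_{+}$. Once these are noted, the computation above completes the proof.
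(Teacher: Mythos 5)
Your proposal is correct and is precisely the argument the paper intends: the lemma is stated with no written proof beyond the phrase ``By (2.5)'', and your chain-rule computation along the characteristics defined in (2.8)--(2.9), combined with the two equations of (2.5), is exactly that one-line justification spelled out. You also correctly handle the naming convention under which the curve $x=x^{+}(t,\alpha)$ has speed $\lambda_{-}$ and hence carries the invariant $\lambda_{+}$.
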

The following lemma can be found in Kong-Zhang \cite{11}
\begin{lemma}
In terms of characteristic parameters $(\alpha,\beta)$ introduced above, it holds that
$$
t(\alpha,\beta)=\int_\alpha^\beta\frac{1}{\Lambda_{+}(\zeta)-\Lambda_{-}(\zeta)}d\zeta,
\eqno(2.10)
$$
$$
x(\alpha,\beta)=\frac{1}{2}\left\{{\alpha+\beta+\int_\alpha^\beta\frac{\Lambda_{+}(\zeta)+\Lambda_{-}(\zeta)}{\Lambda_{+}(\zeta)-\Lambda_{-}(\zeta)}}d\zeta\right\}.
\eqno(2.11)
$$
\end{lemma}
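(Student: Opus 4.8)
The plan is to treat $t$ and $x$ as $C^{2}$ functions of the characteristic coordinates $(\alpha,\beta)$ on the domain of the smooth solution and to derive a closed pair of first-order relations for them, which can then be integrated explicitly. First I would use Lemma 2.1: since the level set $\{\beta=\mathrm{const}\}$ is exactly the $\lambda_{-}$-characteristic $x^{+}(\cdot,\beta)$ and the level set $\{\alpha=\mathrm{const}\}$ is the $\lambda_{+}$-characteristic $x^{-}(\cdot,\alpha)$, the Riemann invariants are frozen to their initial values along these curves, giving $\lambda_{-}(t,x)=\Lambda_{-}(\alpha)$ and $\lambda_{+}(t,x)=\Lambda_{+}(\beta)$ at the point with coordinates $(\alpha,\beta)$. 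Moving along a coordinate line is moving along a characteristic, so $dx=\lambda_{-}\,dt$ when $\beta$ is held fixed and $dx=\lambda_{+}\,dt$ when $\alpha$ is held fixed; this yields the two identities $\partial_{\alpha}x=\Lambda_{-}(\alpha)\,\partial_{\alpha}t$ and $\partial_{\beta}x=\Lambda_{+}(\beta)\,\partial_{\beta}t$.

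These two identities do not by themselves pin down $t$ and $x$ (they leave a pair of arbitrary one-variable functions), so the crux is to extract one more relation. I would differentiate the first identity in $\beta$ and the second in $\alpha$ and subtract; equality of the mixed partials of $x$ collapses everything to $(\Lambda_{+}(\beta)-\Lambda_{-}(\alpha))\,\partial_{\alpha}\partial_{\beta}t=0$. Strict hyperbolicity makes the prefactor nonzero, since $\Lambda_{+}(\beta)=\lambda_{+}(t,x)>\lambda_{-}(t,x)=\Lambda_{-}(\alpha)$ because $c=\mu/\rho>0$, and hence $\partial_{\alpha}\partial_{\beta}t=0$, i.e. $\partial_{\beta}t$ depends on $\beta$ alone. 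I expect this to be the main obstacle to phrase cleanly, as it is precisely the step that turns the underdetermined pair of slope relations into something integrable, and it silently requires both the $C^{2}$ regularity (for commuting derivatives) and the non-degeneracy $\Lambda_{+}(\beta)\neq\Lambda_{-}(\alpha)$.

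The remaining input is the boundary behaviour on the diagonal $\alpha=\beta$, where $t=0$ and $x=\alpha$. Differentiating these along the diagonal gives $\partial_{\alpha}t+\partial_{\beta}t=0$ and $\Lambda_{-}\partial_{\alpha}t+\Lambda_{+}\partial_{\beta}t=1$, whence $\partial_{\beta}t=1/(\Lambda_{+}-\Lambda_{-})$ on the diagonal. Since $\partial_{\beta}t$ is a function of $\beta$ only and the diagonal sweeps out every value of $\beta$, this identifies $\partial_{\beta}t=1/(\Lambda_{+}(\beta)-\Lambda_{-}(\beta))$ everywhere, and symmetrically $\partial_{\alpha}t=-1/(\Lambda_{+}(\alpha)-\Lambda_{-}(\alpha))$. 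Integrating outward from the diagonal point $(\alpha,\alpha)$, where $t=0$, then gives $(2.10)$.

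Finally, for $x$ I would substitute the now-known $\partial_{\alpha}t,\partial_{\beta}t$ back into the two slope identities to get $\partial_{\beta}x=\Lambda_{+}(\beta)/(\Lambda_{+}(\beta)-\Lambda_{-}(\beta))$ and $\partial_{\alpha}x=-\Lambda_{-}(\alpha)/(\Lambda_{+}(\alpha)-\Lambda_{-}(\alpha))$, each again a one-variable function. Integrating in $\beta$ from the diagonal, where $x=\alpha$, yields $x=\alpha+\int_{\alpha}^{\beta}\Lambda_{+}(\zeta)/(\Lambda_{+}(\zeta)-\Lambda_{-}(\zeta))\,d\zeta$, and the elementary rearrangements $\tfrac12(\alpha+\beta)=\alpha+\tfrac12\int_{\alpha}^{\beta}d\zeta$ together with $2\Lambda_{+}=(\Lambda_{+}-\Lambda_{-})+(\Lambda_{+}+\Lambda_{-})$ recast this as the symmetric form $(2.11)$. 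The only points demanding real care are the justification that the coordinate lines genuinely coincide with the two characteristic families, so that the slope relations hold, and the non-degeneracy used in the commutation step.
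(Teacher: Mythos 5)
Your proof is correct and complete. Note that the paper itself offers no proof of this lemma --- it is quoted from Kong--Zhang \cite{11} --- but your derivation is the standard one behind that formula: the slope relations $x_{\alpha}=\Lambda_{-}(\alpha)\,t_{\alpha}$ and $x_{\beta}=\Lambda_{+}(\beta)\,t_{\beta}$ coming from Lemma 2.1, the cross-differentiation that yields $(\Lambda_{+}(\beta)-\Lambda_{-}(\alpha))\,t_{\alpha\beta}=0$ and hence $t_{\alpha\beta}=0$ by strict hyperbolicity ($c=\mu/\rho>0$), the evaluation of $t_{\alpha},t_{\beta}$ on the diagonal $\alpha=\beta$ where $t=0$ and $x=\alpha$, and the final symmetrization $2\Lambda_{+}=(\Lambda_{+}-\Lambda_{-})+(\Lambda_{+}+\Lambda_{-})$ all check out, and you correctly flag the two genuine hypotheses being used (enough regularity to commute mixed partials, and $\Lambda_{+}(\beta)\neq\Lambda_{-}(\alpha)$).
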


\begin{figure}[h]
\centering
\includegraphics[]{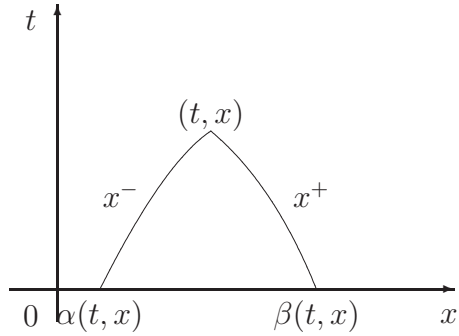}
\caption{The geometric meaning of characteristic coordinates $\alpha$ and $\beta$.}
\end{figure}
 \section{ formation of singularity}
 This section is devoted to the formation of the cusp-type singularity under suitable assumptions on initial data. The following lemma, which can be found in Kong \cite{10}, plays an important role in our discussion.
 \begin{lemma}
  Adopt the notations in Section 2. If there exists $\alpha$ such that $\Lambda_{-}(\alpha)\neq\lambda_{+}(t,x^{-}(t,\alpha))$ for $t\geq0$, then it holds that
 $$
 \frac{\partial x^{-}(t,\alpha)}{\partial\alpha}=\frac{\lambda_{+}(t,x^{-}(t,\alpha))-\Lambda_{-}(\alpha)}{\Lambda_{+}(\alpha)-\Lambda_{-}(\alpha)}
 \eqno(3.1)
 $$and
 $$
 \frac{\partial\lambda_{-}(t,x)}{\partial x}\bigg|_{x=x^{-}(t,\alpha)}=\Lambda^{'}_{-}(\alpha)\frac{\Lambda_{+}(\alpha)-\Lambda_{-}(\alpha)}{\lambda_{+}(t,x^{-}(t,\alpha))-\Lambda_{-}(\alpha)}.
 \eqno(3.2)
 $$
 Similar result holds for $x=x^{+}(t,\beta)$ and $\lambda_{+}(t,x^{+}(t,\beta))$.
 \end{lemma}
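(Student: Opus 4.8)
The plan is to exploit the fact, recorded in Lemma 2.1, that $\lambda_{-}$ is constant along each $x^{-}$-characteristic, combined with the linear variational equation satisfied by $\partial x^{-}/\partial\alpha$ along that same characteristic. Throughout I fix $\alpha$ and work in the region where the classical solution of (2.5), (2.7) exists, so that all mixed partials commute and differentiation under the flow is legitimate. I will establish (3.1) first, then deduce (3.2) as an easy corollary; the statement for $x^{+}(t,\beta)$ follows by the symmetric interchange of the $\pm$ labels.

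First I would set $w(t)=\partial x^{-}(t,\alpha)/\partial\alpha$ and differentiate the defining characteristic ODE (2.9) with respect to $\alpha$. Since $dx^{-}/dt=\lambda_{+}(t,x^{-}(t,\alpha))$, the chain rule yields the scalar linear variational equation
$$\frac{dw}{dt}=\left.\frac{\partial\lambda_{+}}{\partial x}\right|_{x=x^{-}(t,\alpha)}\,w,\qquad w(0)=1,$$
the initial condition coming from $x^{-}(0,\alpha)=\alpha$. Next I would introduce $G(t)=\lambda_{+}(t,x^{-}(t,\alpha))$, the value of $\lambda_{+}$ carried along the $x^{-}$-characteristic, and compute $dG/dt$ using the chain rule together with the second equation of (2.5) in the form $\partial_t\lambda_{+}=-\lambda_{-}\partial_x\lambda_{+}$. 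Because $dx^{-}/dt=\lambda_{+}$, the two $\partial_x\lambda_{+}$ contributions combine, and invoking Lemma 2.1 to replace $\lambda_{-}|_{x^{-}}$ by its constant value $\Lambda_{-}(\alpha)$ gives
$$\frac{dG}{dt}=\big(\lambda_{+}-\lambda_{-}\big)\Big|_{x^{-}}\left.\frac{\partial\lambda_{+}}{\partial x}\right|_{x=x^{-}}=\big(G-\Lambda_{-}(\alpha)\big)\left.\frac{\partial\lambda_{+}}{\partial x}\right|_{x=x^{-}}.$$

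The crux of the argument is the observation that $w$ and $G-\Lambda_{-}(\alpha)$ satisfy \emph{the same} scalar linear ODE, with common coefficient $\partial_x\lambda_{+}|_{x^{-}}$. Consequently their ratio is constant in $t$ along the characteristic; the hypothesis $\Lambda_{-}(\alpha)\neq\lambda_{+}(t,x^{-}(t,\alpha))$ guarantees that $G-\Lambda_{-}(\alpha)$ never vanishes, so the ratio is well defined for all admissible $t$. Evaluating at $t=0$, where $w(0)=1$ and $G(0)=\lambda_{+}(0,\alpha)=\Lambda_{+}(\alpha)$, pins the constant at $1/(\Lambda_{+}(\alpha)-\Lambda_{-}(\alpha))$, which rearranges to exactly (3.1).

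Finally, for (3.2) I would differentiate the identity $\lambda_{-}(t,x^{-}(t,\alpha))=\Lambda_{-}(\alpha)$ (again Lemma 2.1) with respect to $\alpha$, obtaining
$$\left.\frac{\partial\lambda_{-}}{\partial x}\right|_{x=x^{-}(t,\alpha)}\,\frac{\partial x^{-}(t,\alpha)}{\partial\alpha}=\Lambda_{-}'(\alpha),$$
and then substitute the expression for $\partial x^{-}/\partial\alpha$ just derived in (3.1); solving for $\partial_x\lambda_{-}|_{x^{-}}$ produces (3.2) at once. I expect the only genuine obstacle to be recognizing that the two auxiliary quantities $w$ and $G-\Lambda_{-}(\alpha)$ obey identical linear ODEs—once that is seen, the result is a matter of reading off initial data and dividing, with the nonvanishing hypothesis doing exactly the work needed to keep every quotient meaningful.
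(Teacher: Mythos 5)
Your proof is correct. The paper does not reproduce a proof of Lemma 3.1 (it simply cites Kong--Tsuji [10]), but your argument --- the variational equation $\dot w=\partial_x\lambda_{+}\big|_{x^{-}}\,w$ with $w(0)=1$, the observation that $G-\Lambda_{-}(\alpha)$ with $G(t)=\lambda_{+}(t,x^{-}(t,\alpha))$ satisfies the same scalar linear ODE so that the ratio $w/(G-\Lambda_{-}(\alpha))$ is constant and equals its value $1/(\Lambda_{+}(\alpha)-\Lambda_{-}(\alpha))$ at $t=0$, followed by differentiating the identity $\lambda_{-}(t,x^{-}(t,\alpha))=\Lambda_{-}(\alpha)$ in $\alpha$ to obtain (3.2) --- is exactly the standard derivation of this result, and you use the nonvanishing hypothesis correctly (in particular, at $t=0$ it gives $\Lambda_{+}(\alpha)\neq\Lambda_{-}(\alpha)$, which legitimizes the denominator).
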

\begin{remark}
It follows from $(3.2)$ that if there exists time $t_{0}>0$ which satisfies  $$\lambda_{+}(t_{0},x^{-}(t_{0},\alpha))=\Lambda_{-}(\alpha),\:\: \Lambda_{-}(\alpha)\neq\Lambda_{+}(\alpha)\:\: \text{and} \:\:\Lambda^{'}_{-}(\alpha)\neq0,\;\text{for some}\, \alpha,$$then the solution of the Cauchy problem (2.5), (2.7) must blow up at the time $t_{0}$. By the theory of characteristic method, we observe that $\lambda_{-}(t,x)$ and $\lambda_{+}(t,x)$ are bounded, while $(\lambda_{-})_{x}$ and $(\lambda_{+})_{x}$ tend to the infinity as $t$ goes to $t_{0}$.

 It is well known that the formation of traditional blowup, e.g., the formation of ``shock wave'' is due to the envelope of the same family of characteristics (see \cite{A,9}). However, in this paper, we shall investigate a new phenomenon on the formation of singularities which is based on the envelope of different families of characteristics (see Figure 2).
\end{remark}
\begin{figure}[h]
\centering
\includegraphics[]{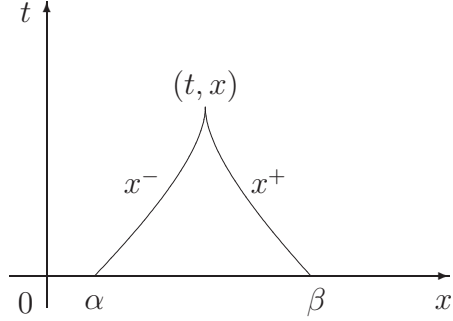}
\caption{The envelope of different families of characteristics}
\end{figure}
To do so, we suppose that the initial data $\Lambda_{-}(x)$ and $\Lambda_{+}(x)$ are suitably smooth
 functions and satisfy the following assumptions:

Assumption (H1):
$$
\Lambda_{-}(x)<\Lambda_{+}(x),\quad \forall\:  x\in \mathbb{R}.
\eqno(3.3)
$$

Assumption (H2):
$$
\Lambda^{'}_{-}(x)<0\quad \text{and}\quad \Lambda^{'}_{+}(x)<0,\qquad\forall\:x\in \mathbb{R}.
\eqno(3.4)
$$
Define
 $$\Sigma=\{(\alpha,\beta)|\alpha<\beta\;\text{and}\;\Lambda_{-}(\alpha)=\Lambda_{+}(\beta)\}.
 \eqno(3.5)$$
  In order to avoid confusion, here and hereafter, we denote the variable of $\Lambda_{-}(x)$ by $\alpha$ and the variable of $\Lambda_{+}(x)$ by $\beta$.

 By (3.4) and (3.5), for $\forall\,(\alpha,\beta)\in\Sigma$, it holds that $\beta(\alpha)=\Lambda_{+}^{-1}\Lambda_{-}(\alpha)$. Define
 $$
 f(\alpha)\triangleq\frac{\Lambda^{'}_{-}(\alpha)}{\Lambda_{+}(\beta(\alpha))-\Lambda_{-}(\beta(\alpha))}
-\frac{\Lambda^{'}_{+}(\beta(\alpha))}{\Lambda_{+}(\alpha)-\Lambda_{-}(\alpha)},
\eqno(3.6)
 $$
 where $$\Lambda^{'}_{+}(\beta(\alpha))=\frac{d\Lambda_{+}(\beta)}{d\beta}\bigg|_{\beta=\beta(\alpha)}.$$
 We furthermore assume that there exists $(\alpha_{0},\beta_{0})\in\Sigma$ such that

 Assumption (H3):
$$\Lambda_{-}(\alpha_{0})=\Lambda_{+}(\beta_{0}).
\eqno(3.7)
$$

Assumption (H4):
$$
f(\alpha_{0})=0.
\eqno(3.8)
$$

Assumption (H5):
$$
f^{'}(\alpha_{0})<0.
\eqno(3.9)
$$

For simplicity, without loss of generality, we may suppose that
$$
\Lambda_{-}(\alpha_{0})=\Lambda_{+}(\beta_{0})=0.
\eqno(3.10)
$$
This can be achieved by making a simple translation transform.
\begin{lemma}
Initial data set $\{(\Lambda_{+}(x),\Lambda_{-}(x))\}$ satisfying assumptions (H1)-(H5) is not empty.
\end{lemma}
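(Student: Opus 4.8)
The plan is to prove the lemma constructively: exhibit one explicit pair $(\Lambda_-,\Lambda_+)$ and verify (H1)--(H5). A preliminary observation shapes the search. Affine profiles cannot work, since two strictly decreasing lines with a common slope make $f\equiv 0$ (so (H5) fails), while lines of distinct slopes make $\Lambda_+-\Lambda_-$ a nonconstant linear function that changes sign on $\mathbb{R}$ (so (H1) fails). I would therefore use genuinely nonlinear, bounded, strictly decreasing profiles, and lock $\Lambda_+$ to $\Lambda_-$ by a rigid horizontal shift $\Lambda_+(x)=\Lambda_-(x-d)$ with $d>0$. This single device makes (H1) and (H2) automatic: monotonicity of $\Lambda_-$ gives $\Lambda_+(x)=\Lambda_-(x-d)>\Lambda_-(x)$ and $\Lambda_+'(x)=\Lambda_-'(x-d)<0$. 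Writing $L:=\Lambda_-$, it also collapses the characteristic map to $\beta(\alpha)=\Lambda_+^{-1}\Lambda_-(\alpha)=\alpha+d$, trivialising every composition in (3.6).

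With this ansatz the surviving hypotheses become conditions on the single function $L$. Substituting $\beta(\alpha)=\alpha+d$ into (3.6) gives
\[
f(\alpha)=L'(\alpha)\left[\frac{1}{L(\alpha)-L(\alpha+d)}-\frac{1}{L(\alpha-d)-L(\alpha)}\right],
\]
so, since $L'<0$, (H4) (i.e. $f(\alpha_0)=0$) is equivalent to the vanishing of the symmetric second difference $L(\alpha_0+d)-2L(\alpha_0)+L(\alpha_0-d)=0$, while (H3) reduces to $L(\alpha_0)=0$. I would meet both at once by taking $L$ odd about the origin and setting $\alpha_0=0$, $\beta_0=d$: then $L(0)=0$ and $L(d)+L(-d)=0$ force (H3) and (H4) for every $d>0$. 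A concrete admissible choice is $L(x)=\Lambda_-(x)=-\arctan x$ with $\Lambda_+(x)=-\arctan(x-d)$, which is odd, smooth, strictly decreasing, and bounded, with range matching that of $\Lambda_+$ so that $\beta(\alpha)=\alpha+d$ is globally defined.

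The crux is (H5). Writing $g(\alpha)$ for the bracketed factor above, (H4) says exactly $g(0)=0$, so the product rule gives $f'(0)=L'(0)\,g'(0)$ and the entire question reduces to the sign of $g'(0)$. A short computation, using that oddness of $L$ makes $L'$ even, yields $g'(0)=\dfrac{2\,(L'(d)-L'(0))}{\big(L(-d)\big)^{2}}$; hence $f'(0)<0$ is equivalent to the single inequality $L'(0)<L'(d)$, i.e. $L$ is steepest at the origin. For $L=-\arctan$ one has $L'(0)=-1<-\tfrac{1}{1+d^{2}}=L'(d)$, so (H5) holds for every $d>0$, which completes the verification and proves the set nonempty.

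I expect the genuine obstacle to be precisely this simultaneous handling of (H4) as an equality and (H5) as a strict inequality at the same point, all while preserving the global ordering (H1). The shift-plus-oddness construction dissolves the tension by rendering (H1)--(H4) structural — independent of the detailed shape of $L$ — and concentrating the whole force of (H5) into the elementary requirement $L'(0)<L'(d)$. Should a candidate profile give the opposite sign of $f'(\alpha_0)$, replacing $L(x)$ by $-L(-x)$ reflects it and flips that sign, so (H5) can always be arranged; the profile $-\arctan$ already has the correct orientation.
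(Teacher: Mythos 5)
Your construction is correct, and every step checks out: with $\Lambda_+(x)=L(x-d)$, $L=-\arctan$, $d>0$, one indeed gets $\beta(\alpha)=\alpha+d$, whence $f(\alpha)=L'(\alpha)\bigl[\tfrac{1}{L(\alpha)-L(\alpha+d)}-\tfrac{1}{L(\alpha-d)-L(\alpha)}\bigr]$; oddness of $L$ gives $f(0)=0$, and the computation $f'(0)=L'(0)\,g'(0)$ with $g'(0)=2\bigl(L'(d)-L'(0)\bigr)/L(-d)^{2}>0$ yields $f'(0)<0$, so (H1)--(H5) all hold at $(\alpha_0,\beta_0)=(0,d)$. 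This is, however, a genuinely different route from the paper's. The paper does not exhibit an explicit pair: it fixes an arbitrary decreasing $\Lambda_-$ with $\Lambda_-(\alpha_0)=0$, then prescribes the $2$-jet of $\Lambda_+$ at $\beta_0$ --- the value $0$, a negative slope forced by (H4), and a second derivative large enough to make the inequality extracted from (3.11) hold --- and asserts that a smooth $\Lambda_+$ interpolating these jet data while keeping (H1)--(H2) globally ``obviously'' exists. That argument emphasizes that the admissible set is large (the constraints are open conditions on finitely many derivatives at two points), but it leaves the global interpolation step unverified. Your translation-plus-oddness ansatz trades that generality for complete verifiability: (H1)--(H4) become structural identities independent of the shape of $L$, and (H5) collapses to the single elementary inequality $L'(0)<L'(d)$, checked in closed form. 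One minor remark: for your example $\Lambda_+(x)-\Lambda_-(x)=\arctan x-\arctan(x-d)\to 0$ as $|x|\to\infty$, so the corresponding $\rho_0=2\mu/(\Lambda_+-\Lambda_-)$ is unbounded at infinity; this does not affect the lemma, which concerns only (H1)--(H5), but it is worth noting that your data do not have bounded $C^2$ norm in the sense of the paper's standing hypotheses on $(\rho_0,u_0)$, whereas the paper's freer construction leaves room to arrange that as well.
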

\begin{proof}
We prove the lemma by construction.

Firstly, choose $\Lambda_{-}(x)$ such that it satisfies (3.4) and (3.10). Then, at the point $\alpha_{0}$ it holds that
$$
\Lambda_{-}(\alpha_{0})=0\quad\rm{and}\quad \Lambda^{'}_{-}(\alpha_{0})<0.
$$

Secondly, fix $\Lambda_{-}(x)$ and choose $\Lambda_{+}(x)>\Lambda_{-}(x)$ for all $x\in\mathbb R$. Moreover, at the point $\beta_{0}$ it satisfies (3.8) and $\Lambda_{+}(\beta_{0})=0$.

By (3.8), $\Lambda_{+}(x)$ satisfies
$$
\frac{\Lambda^{'}_{-}(\alpha_{0})}{\Lambda_{+}(\beta_{0})-\Lambda_{-}(\beta_{0})}=\frac{\Lambda^{'}_{+}(\beta_{0})}
{\Lambda_{+}(\alpha_{0})-\Lambda_{-}(\alpha_{0})},
$$
By assumption (H1), it holds that
$$
\Lambda_{+}(\beta_{0})-\Lambda_{-}(\beta_{0})>0\quad \rm{and}\quad \Lambda_{+}(\alpha_{0})-\Lambda_{-}(\alpha_{0})>0,
$$
and then
$$
\Lambda_{+}^{'}(\beta_{0})<0.
$$
Thus, we can choose $\Lambda_{+}(x)$ satisfing assumptions (H2)-(H4).

Finally, we prove that $\Lambda_{+}(x)$, constructed in the way mentioned above, satisfies the assumption (H5) for fixed $\Lambda_{-}(x)$.

In fact, by (3.8), if we fix the value of $\Lambda_{+}^{'}(\beta_{0})<0$, then $\Lambda_{+}(\alpha_{0})$ satisfies
$$
\Lambda_{+}(\alpha_{0})=-\frac{\Lambda^{'}_{+}(\beta_{0})\Lambda_{-}(\beta_{0})}{\Lambda^{'}_{-}(\alpha_{0})}>0.
$$
By (3.5) and (3.9), it must hold that
\begin{align*}
\begin{split}
f^{'}(\alpha_{0})&=-\frac{\Lambda^{''}_{-}(\alpha_{0})\Lambda_{-}(\beta_{0})+\Lambda^{'}_{-}(\alpha_{0})\left(\Lambda^{'}_{+}(\beta_{0})-
\Lambda^{'}_{-}(\beta_{0})\right)\frac{\Lambda^{'}_{-}(\alpha_{0})}{\Lambda^{'}_{+}(\beta_{0})}}{\Lambda^{2}_{-}(\beta_{0})}\\
&\quad -\frac{\Lambda^{''}_{+}(\beta_{0})\frac{\Lambda^{'}_{-}(\alpha_{0})}{\Lambda^{'}_{+}(\beta_{0})}\Lambda_{+}(\alpha_{0})
-\Lambda^{'}_{+}(\beta_{0})\left(\Lambda^{'}_{+}(\alpha_{0})-\Lambda^{'}_{-}(\alpha_{0})\right)}{\Lambda^{2}_{+}(\alpha_{0})}\\
&=\frac{\Lambda^{''}_{-}(\alpha_{0})\Lambda^{2}_{+}(\alpha_{0})-\Lambda^{2}_{-}(\beta_{0})\Lambda^{''}_{+}(\beta_{0})}
{-\Lambda_{-}(\beta_{0})\Lambda^{2}_{+}(\alpha_{0})} +\frac{
\Lambda^{'}_{+}(\beta_{0})\Lambda_{-}(\beta_{0})\left[\Lambda^{'}_{+}(\beta_{0})-\Lambda^{'}_{-}(\beta_{0})-\left(\Lambda^{'}_{+}(\alpha_{0})-\Lambda^{'}_{-}(\alpha_{0})\right)\right]}
{-\Lambda_{-}(\beta_{0})\Lambda^{2}_{+}(\alpha_{0})}\\
&<0,
\end{split}\tag{3.11}
\end{align*}
where $\beta_{0}=\beta(\alpha_{0})$. Since $\Lambda_{-}^{'}(x)<0$, by (3.4) and (3.5), we have
$$
\Lambda_{-}(\beta_{0})<0.
$$
Then, at $\beta_{0}=\beta(\alpha_{0})$, $\Lambda_{+}(x)$ should satisfy at $\alpha_{0}$ and $\beta_{0}$
$$
\Lambda^{2}_{-}(\beta_{0})\Lambda^{''}_{+}(\beta_{0})>\Lambda^{''}_{-}(\alpha_{0})\Lambda^{2}_{+}(\alpha_{0})+
\Lambda^{'}_{+}(\beta_{0})\Lambda_{-}(\beta_{0})\left[\Lambda^{'}_{+}(\beta_{0})-\Lambda^{'}_{-}(\beta_{0})-\left(\Lambda^{'}_{+}(\alpha_{0})-\Lambda^{'}_{-}(\alpha_{0})\right)\right].
$$

Obviously, there exists such a $\Lambda_{+}(x)$ such that the above inequality holds at the point $\alpha_{0}$ and $\beta_{0}=\beta(\alpha_{0})$. Therefore, it is easy to construct a smooth curve $\Lambda_{+}(x)$ to satisfy assumptions (H1)-(H5) once the information has been known at the points $\alpha_{0}$ and $\beta_{0}$.
\end{proof}
\begin{remark}
Assumptions (H3)-(H5) are restrictions to the initial data $\Lambda_{\pm}(x)$ at the points $\alpha_{0}$ and $\beta_{0}$, so we can change the shape of the curves $\Lambda_{\pm}(x)$ to make sure that they satisfy assumptions (H1)-(H5) once their properties at $P_{1}=(\alpha_{0},0)$ and $P_{2}=(\beta_{0},0)$ have been known. In fact, the geometric meaning of assumption (H4) is $$|BA|=|CD|$$
 as shown in Figure 3, where $|\cdot|$ denotes the distance in Euclidean space, $P_{1}B$ (resp. $P_{2}D$) stands for the tangential line of the curve $\Lambda_{-}(x)$ (resp. $\Lambda_{+}(x)$) at the point $P_{1}$ (resp. $P_{2}$).
\end{remark}
\begin{figure}[h]
\centering
\includegraphics[]{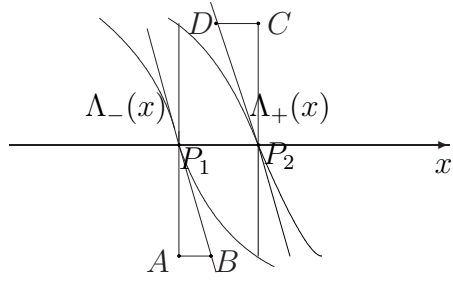}
\caption{ The geometric meaning of the assumption (H4). }
\end{figure}

By the existence and uniqueness theorem of a $C^{1}$ solution of the Cauchy problem for a quasilinear hyperbolic systems (see \cite{5}), under the assumptions (H1)-(H5), the Cauchy problem (2.5)-(2.7) has a unique $C^{1}$ solution $(\lambda_{-}(t,x),\lambda_{+}(t,x))$ in the domain $D(t_{0})\triangleq\{(t,x)|0\leq t<t_{0},-\infty<x<\infty\}$, where $t_{0}$ is just the blowup time, i.e., the life span of the $C^{1}$ solution of the Cauchy problem (2.5)-(2.7). Throughout the paper, we refer $D(t_{0})$ as the existence domain of the classical solution.

The next lemma comes from Kong \cite{10}.
\begin{lemma}
If there exist two points $\alpha_{0}$ and $\beta_{0}$ satisfying (3.7), then the characteristic $x=x^{-}(t,\alpha_{0})$ must intersect $x=x^{+}(t,\beta_{0})$ in finite time, where we assume that the classical solution exists.
\end{lemma}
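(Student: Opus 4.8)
The plan is to recast both characteristics as coordinate lines in the characteristic coordinate system $(\alpha,\beta)$ of Lemma 2.2, so that their intersection becomes a single, explicitly computable point. Recall from the definition of $(\alpha,\beta)$ that $\alpha(t,x)$ is obtained by tracing the $\lambda_{+}$-characteristic (i.e. the curve $x=x^{-}(t,\cdot)$) back to $t=0$, while $\beta(t,x)$ is obtained by tracing the $\lambda_{-}$-characteristic (i.e. $x=x^{+}(t,\cdot)$) back to $t=0$. Consequently $\alpha$ is constant along $x=x^{-}(t,\alpha_{0})$ and $\beta$ is constant along $x=x^{+}(t,\beta_{0})$, so that the first curve is exactly the coordinate line $\{\alpha=\alpha_{0}\}$ and the second is the coordinate line $\{\beta=\beta_{0}\}$. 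These two lines meet at the unique characteristic-coordinate point $(\alpha_{0},\beta_{0})$, and by (2.10) this point sits at the physical time
$$
t(\alpha_{0},\beta_{0})=\int_{\alpha_{0}}^{\beta_{0}}\frac{1}{\Lambda_{+}(\zeta)-\Lambda_{-}(\zeta)}\,d\zeta .
$$
Thus, once I show that this quantity is finite and strictly positive, the two characteristics intersect at time $t(\alpha_{0},\beta_{0})<\infty$ and the lemma follows.

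The next step is to verify $\alpha_{0}<\beta_{0}$, which both makes the integral positive and places the intersection strictly in the upper half plane. Applying (H1) at $\beta_{0}$ and then (3.7) gives $\Lambda_{-}(\beta_{0})<\Lambda_{+}(\beta_{0})=\Lambda_{-}(\alpha_{0})$; since $\Lambda_{-}$ is strictly decreasing by (H2), this forces $\beta_{0}>\alpha_{0}$. Finiteness then follows because the integrand is continuous and, by (H1), strictly positive on the compact interval $[\alpha_{0},\beta_{0}]$, hence bounded below by some $m>0$, giving $0<t(\alpha_{0},\beta_{0})\le(\beta_{0}-\alpha_{0})/m<\infty$. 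This settles the claim modulo the identification of the characteristics with coordinate lines.

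The main obstacle I anticipate is justifying that this coordinate-line intersection is a genuine crossing inside the existence domain of the classical solution, rather than a formal artifact of the map $(\alpha,\beta)\mapsto(t,x)$. Since the hypothesis explicitly assumes the classical solution exists, the characteristic coordinates provide a valid one-to-one parametrization on $D(t_{0})$, so the image of $(\alpha_{0},\beta_{0})$ is an honest point of the $(t,x)$-plane lying on both curves; one need only note that $t(\alpha_{0},\beta_{0})$ does not exceed the life span. A purely direct alternative would be to monitor the gap $P(t)=x^{+}(t,\beta_{0})-x^{-}(t,\alpha_{0})$, which starts at $P(0)=\beta_{0}-\alpha_{0}>0$ with derivative $P'(t)=\lambda_{-}(t,x^{+}(t,\beta_{0}))-\lambda_{+}(t,x^{-}(t,\alpha_{0}))$; geometrically $P$ must reach zero because the $x^{-}$-characteristic travels at the larger speed $\lambda_{+}=u+c>u-c=\lambda_{-}$ yet starts behind. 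However, as $\lambda_{-}$ is not constant along $x=x^{+}(t,\beta_{0})$ and $\lambda_{+}$ is not constant along $x=x^{-}(t,\alpha_{0})$, bounding $P'(t)$ away from zero directly is awkward, which is precisely why routing the proof through the closed-form expression (2.10) is the cleaner path.
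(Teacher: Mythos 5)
The paper does not actually prove this lemma---it is quoted verbatim from Kong--Tsuji \cite{10} with no argument supplied---so there is nothing internal to compare your proof against line by line. Judged on its own, your argument is correct and is probably the cleanest route available given that Lemma 2.2 is already on the table: identifying $x=x^{-}(t,\alpha_{0})$ with the coordinate line $\{\alpha=\alpha_{0}\}$ and $x=x^{+}(t,\beta_{0})$ with $\{\beta=\beta_{0}\}$ is consistent with the paper's (admittedly confusing) sign conventions in (2.8)--(2.9), the intersection is then forced to be the image of $(\alpha_{0},\beta_{0})$ under $\Pi$, and (2.10) gives its time explicitly. Your deduction of $\alpha_{0}<\beta_{0}$ from (H1), (H2) and (3.7) is valid; note only that the lemma as stated assumes just (3.7), so you are implicitly importing the standing hypotheses of Section 3 (which is legitimate here, since $\alpha_{0}<\beta_{0}$ is in any case built into the definition of $\Sigma$ in (3.5), and (H1)--(H2) are in force throughout). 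Your handling of the existence caveat is also the right reading: under (H1)--(H5) the intersection time $t(\alpha_{0},\beta_{0})$ is exactly the life span $t_{0}$, and the characteristics extend continuously to that boundary time because $\lambda_{\pm}$ stay bounded.

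Two small points of wording rather than substance. First, in the finiteness step you say the \emph{integrand} is bounded below by $m>0$ and conclude $t\le(\beta_{0}-\alpha_{0})/m$; what you mean is that the \emph{denominator} $\Lambda_{+}-\Lambda_{-}$ is bounded below by $m$ on the compact interval, so the integrand is bounded \emph{above} by $1/m$. The conclusion is right, the sentence is not. Second, your closing remark correctly diagnoses why the direct ODE comparison of the gap $P(t)=x^{+}(t,\beta_{0})-x^{-}(t,\alpha_{0})$ is awkward here: by Lemma 2.1 the quantity that is conserved along each curve is the ``other'' Riemann invariant, so neither term in $P'(t)$ is constant, and one would need a uniform positive lower bound on $\lambda_{+}-\lambda_{-}$ between the two curves to close that argument. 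Routing everything through the closed-form (2.10) sidesteps this entirely, which is presumably why Kong--Tsuji set up those formulas in the first place.
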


In what follows, under the assumptions (H1)-(H5), we consider the Cauchy problem given by (2.5), (2.7).

Let us fix $(\alpha_{0},\beta_{0})$ satisfying the assumptions (H1)-(H5), we introduce
$$
t_{0}=\int_{\alpha_{0}}^{\beta_{0}}\frac{1}{\Lambda_{+}(\zeta)-\Lambda_{-}(\zeta)}d\zeta
\eqno(3.12)
$$
and
$$
x_{0}=\frac{1}{2}\left\{\alpha_{0}+\beta_{0}+\int_{\alpha_{0}}^{\beta_{0}}\frac{\Lambda_{+}(\zeta)+\Lambda_{-}(\zeta)}{\Lambda_{+}(\zeta)-\Lambda_{-}(\zeta)}d\zeta\right\}.
\eqno(3.13)
$$
%
%
%

\begin{lemma}
There exists a positive constant $\epsilon$ such that $\alpha_{0}$ is the unique zero point of $f(\alpha)$, i.e., $$f(\alpha_{0})=0\:\: \rm{but}\:\:f(\alpha)\neq0,\:\:\quad\rm{for}\:\:\alpha\in(\alpha_{0}-\epsilon,\alpha_{0}+\epsilon).$$
\end{lemma}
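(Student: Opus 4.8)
The plan is to recognize the statement as the standard fact that a simple (non-degenerate) zero of a differentiable function is isolated, and to supply only the modest regularity needed to apply it.

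First I would verify that $f$ is well-defined and differentiable in a neighborhood of $\alpha_{0}$. By assumption (H2) the map $\Lambda_{+}$ is strictly monotone, so its inverse $\Lambda_{+}^{-1}$ exists and, since $\Lambda_{+}'(\beta_{0})<0\neq0$, is differentiable near $\Lambda_{-}(\alpha_{0})$ by the inverse function theorem; consequently $\beta(\alpha)=\Lambda_{+}^{-1}(\Lambda_{-}(\alpha))$ is differentiable near $\alpha_{0}$. By (H1) the two denominators $\Lambda_{+}(\beta(\alpha))-\Lambda_{-}(\beta(\alpha))$ and $\Lambda_{+}(\alpha)-\Lambda_{-}(\alpha)$ are strictly positive, hence bounded away from zero on a small interval; together with $\Lambda_{\pm}\in C^{2}$ this makes $f$ in $(3.6)$ differentiable (indeed $C^{1}$) near $\alpha_{0}$.

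Then I would combine (H4) and (H5). Since $f(\alpha_{0})=0$ and $f$ is differentiable at $\alpha_{0}$, the difference quotient $f(\alpha)/(\alpha-\alpha_{0})$ tends to $f'(\alpha_{0})<0$ as $\alpha\to\alpha_{0}$. Hence there is $\epsilon>0$ such that for every $\alpha$ with $0<|\alpha-\alpha_{0}|<\epsilon$ one has $f(\alpha)/(\alpha-\alpha_{0})<\tfrac{1}{2}f'(\alpha_{0})<0$; in particular $f(\alpha)\neq0$. This shows that $\alpha_{0}$ is the unique zero of $f$ in $(\alpha_{0}-\epsilon,\alpha_{0}+\epsilon)$. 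Equivalently, invoking continuity of $f'$ (from $f\in C^{1}$), one may shrink $\epsilon$ so that $f'<0$ throughout the interval, whence $f$ is strictly decreasing and therefore injective there.

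There is essentially no analytic obstacle here; the only point demanding care is the first step, namely confirming that $f$ is genuinely differentiable at $\alpha_{0}$ — that $\beta(\cdot)$ is a well-defined differentiable function and that the denominators do not vanish — since the sign condition (H5) would be meaningless otherwise. Once that regularity is secured, the conclusion is immediate from the very definition of the derivative.
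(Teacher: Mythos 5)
Your proposal is correct and follows essentially the same route as the paper, which simply states that the claim ``comes from (3.8) and (3.9) directly''; you have merely filled in the routine details (differentiability of $\beta(\alpha)=\Lambda_{+}^{-1}(\Lambda_{-}(\alpha))$ and hence of $f$, then the difference-quotient argument showing a simple zero is isolated). No gap.
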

\begin{proof}
The result comes from (3.8) and (3.9) directly.
\end{proof}
It is obvious that (2.10)-(2.11) define a mapping from the region $U\triangleq\{(\alpha,\beta)\mid\alpha\leq\beta\}$ to the domain $\{(t,x)\mid t\geq0,x\in \mathbb{R}\}$. Denote it by $\Pi:$
$$
\Pi(\alpha,\beta)=(t(\alpha,\beta),x(\alpha,\beta)).
\eqno(3.14)
$$
We introduce the Jacobian matrix of $\Pi$
$$
\mathbf{\bigtriangleup(\alpha,\beta)}=
\left(\begin{array} {cc}
t_{\alpha}&t_{\beta}\\
x_{\alpha}&x_{\beta} \end{array}
\right)
\eqno(3.15)
$$\\
and its Jacobian
$$
J(\alpha,\beta) = t_{\alpha}x_{\beta} - t_{\beta}x_{\alpha}.
\eqno(3.16)
$$
\begin{definition}
A point \textbf{p} in $U$ is called a regular point of the mapping $\Pi$
if the rank $\vartriangle$ is 2 at \textbf{p}. Otherwise, \textbf{p} is called a singular point of $\Pi$.
\end{definition}
It is easy to verify that $\textbf{p}$ is a singular point is equivalent to $\Lambda_{-}(\alpha)=\Lambda_{+}(\beta)$, which can form a smooth curve defined by an explicit function $\beta=\beta(\alpha)$, since $\Lambda_{+}^{'}(\beta)<0$.
\begin{definition}
Let \textbf{p} be a singular point of $\Pi$ and $\Upsilon(\alpha) = (\alpha,\beta(\alpha))$ be the parametric equation with $\Upsilon(\alpha_{0})=\textbf{p}$ for $J(\alpha,\beta)=0$. \textbf{p} is called a fold point of $\Pi$, if $\frac{d}{d\alpha} (\Pi\circ\Upsilon)(\alpha_{0}) \neq(0,0)$, and \textbf{p} is called a cusp point of $\Pi$, if $\frac{d}{d\alpha} (\Pi\circ\Upsilon)(\alpha_{0}) = (0,0)$ but $\frac{d^{2}}{d\alpha^{2}}(\Pi\circ\Upsilon)(\alpha_{0}) \neq (0,0).$
\end{definition}
 \begin{lemma}
 (A) The curve $\beta=\beta(\alpha)$ is strictly increasing as a function of $\alpha$;
 (B) the singular points $(\alpha,\beta)\neq(\alpha_{0},\beta_{0})$ are fold points, while $(\alpha_{0},\beta_{0})$ is a cusp point.
 \end{lemma}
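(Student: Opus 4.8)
The plan is to reduce both parts to explicit computations with the formulas (2.10)--(2.11) for $t(\alpha,\beta)$ and $x(\alpha,\beta)$. First I would record the four partial derivatives obtained by differentiating under the integral sign:
$$t_\alpha = -\frac{1}{\Lambda_+(\alpha)-\Lambda_-(\alpha)}, \quad t_\beta = \frac{1}{\Lambda_+(\beta)-\Lambda_-(\beta)},$$
$$x_\alpha = -\frac{\Lambda_-(\alpha)}{\Lambda_+(\alpha)-\Lambda_-(\alpha)}, \quad x_\beta = \frac{\Lambda_+(\beta)}{\Lambda_+(\beta)-\Lambda_-(\beta)}.$$
A direct computation then gives $J = t_\alpha x_\beta - t_\beta x_\alpha = \dfrac{\Lambda_-(\alpha)-\Lambda_+(\beta)}{(\Lambda_+(\alpha)-\Lambda_-(\alpha))(\Lambda_+(\beta)-\Lambda_-(\beta))}$, confirming once more that the singular set is exactly $\Sigma$. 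For part (A) I would differentiate the defining relation $\Lambda_-(\alpha)=\Lambda_+(\beta(\alpha))$ implicitly to obtain $\beta'(\alpha) = \Lambda_-'(\alpha)/\Lambda_+'(\beta(\alpha))$; since both $\Lambda_-'$ and $\Lambda_+'$ are negative by (H2), this quotient is positive, so $\beta(\alpha)$ is strictly increasing.

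For part (B) the main idea is to evaluate the tangent vector $\frac{d}{d\alpha}(\Pi\circ\Upsilon)(\alpha) = (t_\alpha + t_\beta\beta',\, x_\alpha + x_\beta\beta')$ along the singular curve and show it collapses to a scalar multiple of a fixed direction. Substituting $\beta' = \Lambda_-'/\Lambda_+'$ and multiplying the $t$-component by $\Lambda_+'(\beta)$, I expect the bracket to reproduce precisely the function $f(\alpha)$ of (3.6), that is $\frac{d}{d\alpha}t(\alpha,\beta(\alpha)) = f(\alpha)/\Lambda_+'(\beta(\alpha))$. For the $x$-component I would use the singular condition $\Lambda_-(\alpha)=\Lambda_+(\beta)$, which forces $x_\alpha = \Lambda_-(\alpha)\,t_\alpha$ and $x_\beta = \Lambda_-(\alpha)\,t_\beta$, hence $\frac{d}{d\alpha}x(\alpha,\beta(\alpha)) = \Lambda_-(\alpha)\,\frac{d}{d\alpha}t(\alpha,\beta(\alpha))$. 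Therefore
$$\frac{d}{d\alpha}(\Pi\circ\Upsilon)(\alpha) = \frac{f(\alpha)}{\Lambda_+'(\beta(\alpha))}\bigl(1,\ \Lambda_-(\alpha)\bigr),$$
which vanishes if and only if $f(\alpha)=0$.

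The fold claim then follows immediately: by Lemma 3.4, $f(\alpha)\neq0$ for every singular point with $\alpha\neq\alpha_0$ in the relevant neighborhood, so the tangent vector above is nonzero and such points are fold points. For the cusp at $(\alpha_0,\beta_0)$ we have $f(\alpha_0)=0$ by (H4), so the first derivative vanishes and I must examine the second derivative. Since $\frac{d}{d\alpha}t(\alpha,\beta(\alpha)) = f(\alpha)/\Lambda_+'(\beta(\alpha))$ holds identically in $\alpha$ along $\Upsilon$, differentiating once more and using $f(\alpha_0)=0$ leaves only $\frac{d^2}{d\alpha^2}t(\alpha,\beta(\alpha))\big|_{\alpha_0} = f'(\alpha_0)/\Lambda_+'(\beta_0)$. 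By (H5) $f'(\alpha_0)<0$ and by (H2) $\Lambda_+'(\beta_0)<0$, so this quantity is strictly positive, in particular nonzero; hence $\frac{d^2}{d\alpha^2}(\Pi\circ\Upsilon)(\alpha_0)\neq(0,0)$ and $(\alpha_0,\beta_0)$ is a cusp point.

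The only genuinely delicate point I anticipate is the bookkeeping in the second paragraph: correctly identifying the $t$-derivative along $\Upsilon$ with $f(\alpha)/\Lambda_+'(\beta(\alpha))$ and recognizing the proportionality $\frac{d}{d\alpha}x = \Lambda_-(\alpha)\frac{d}{d\alpha}t$ on $\Sigma$, which is what makes the tangent vector collapse onto the single direction $(1,\Lambda_-(\alpha))$. Once this reduction is in place, the fold/cusp dichotomy is read off directly from the sign hypotheses (H2), (H4), (H5) together with Lemma 3.4, with no further analytic difficulty.
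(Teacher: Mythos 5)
Your proposal is correct and follows essentially the same route as the paper: part (A) by implicit differentiation of $\Lambda_-(\alpha)=\Lambda_+(\beta)$, and part (B) by computing $\frac{d}{d\alpha}(\Pi\circ\Upsilon)=\frac{f(\alpha)}{\Lambda_+'(\beta(\alpha))}\bigl(1,\Lambda_-(\alpha)\bigr)$ along the singular curve and invoking (H4), (H5) and Lemma 3.4 for the fold/cusp dichotomy. Your version is marginally tidier in that it checks only the $t$-component of the second derivative at $\alpha_0$, which suffices since $f'(\alpha_0)/\Lambda_+'(\beta_0)\neq0$, whereas the paper writes out both components.
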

\begin{proof} Differenting $\Lambda_{-}(\alpha)=\Lambda_{+}(\beta)$ with respect to $\alpha$ gives
 $$
 \Lambda^{'}_{-}(\alpha)=\Lambda^{'}_{+}(\beta)\beta_{\alpha},
 \eqno(3.18)
 $$
then
 $$
 \beta_{\alpha}=\frac{\Lambda^{'}_{-}(\alpha)}{\Lambda^{'}_{+}(\beta)}>0.
 \eqno(3.19)
 $$
Equation (3.19) implies that the curve $\beta=\beta(\alpha)$ is strictly increasing as a function of $\alpha$. This proves Part (A).

We next prove Part (B).

To do so, we notice that along $\beta=\beta(\alpha)$
 \begin{eqnarray}
 & &\frac{d}{d\alpha}(t(\alpha,\beta),x(\alpha,\beta))\nonumber\\
 &=&\frac{d}{d\alpha}\left(\int_{\alpha}^{\beta}\frac{1}{\Lambda_{+}(\zeta)-\Lambda_{-}(\zeta)}d\zeta , \frac{1}{2}\left\{\alpha+\beta+\int_{\alpha}^{\beta}\frac{\Lambda_{+}(\zeta)+\Lambda_{-}(\zeta)}{\Lambda_{+}(\zeta)-\Lambda_{-}(\zeta)}d\zeta\right\}\right)
 \nonumber\\
 &=&\left(\frac{\beta_{\alpha}}{\Lambda_{+}(\beta)-\Lambda_{-}(\beta)}-\frac{1}{\Lambda_{+}(\alpha)-\Lambda_{-}(\alpha)} , \frac{\Lambda_{+}(\beta)\beta_{\alpha}}
 {\Lambda_{+}(\beta)-\Lambda_{-}(\beta)}-\frac{\Lambda_{-}(\alpha)}{\Lambda_{+}(\alpha)-\Lambda_{-}(\alpha)}\right)\nonumber\\
 &=&\left(\frac{\Lambda^{'}_{-}(\alpha)}{(\Lambda_{+}(\beta)-\Lambda_{-}(\beta))\Lambda^{'}_{+}(\beta)}-\frac{1}{\Lambda_{+}(\alpha)-\Lambda_{-}(\alpha)}
  , \frac{\Lambda_{+}(\beta)\Lambda^{'}_{-}(\alpha)}{(\Lambda_{+}(\beta)-\Lambda_{-}(\beta))\Lambda^{'}_{+}(\beta)}-
 \frac{\Lambda_{+}(\beta)}{\Lambda_{+}(\alpha)-\Lambda_{-}(\alpha)}\right)\nonumber\\
  &=&\frac{1}{\Lambda^{'}_{+}(\beta)}\left(\frac{\Lambda^{'}_{-}(\alpha)}{\Lambda_{+}(\beta)-\Lambda_{-}(\beta)}-\frac{\Lambda^{'}_{+}(\beta)}{\Lambda_{+}(\alpha)-\Lambda_{-}(\alpha)},\Lambda_{+}(\beta)\left\{\frac{\Lambda^{'}_{-}(\alpha)}{\Lambda_{+}(\beta)-\Lambda_{-}(\beta)}-\frac{\Lambda^{'}_{+}(\beta)}{\Lambda_{+}(\alpha)-\Lambda_{-}(\alpha)}\right\}\right)\nonumber.
 \end{eqnarray}
The assumptions (H4)-(H5) yield
$$\frac{d}{d\alpha}(t(\alpha,\beta) , x(\alpha,\beta))\big|_{(\alpha,\beta)\neq(\alpha_{0},\beta_{0})} \neq 0.$$\\
On the other hand, along the curve $\beta=\beta(\alpha)$
 \begin{eqnarray}
 & &\frac{d^{2}}{d\alpha^{2}}(t(\alpha,\beta),x(\alpha,\beta))\nonumber\\
 &=&\frac{d}{d\alpha}\left(\frac{1}{\Lambda^{'}_{+}(\beta)}f(\alpha),\frac{\Lambda_{+}(\beta)}{\Lambda^{'}_{+}(\beta)}f(\alpha)\right)\nonumber\\
 &=&\left(-\frac{\Lambda^{''}_{+}(\beta)\beta_{\alpha}}{(\Lambda^{'}_{+}(\beta))^{2}}f(\alpha)+\frac{1}{\Lambda^{'}_{+}(\beta)}f^{'}(\alpha),
 \frac{[(\Lambda_{+}^{'}(\beta))^{2}-\Lambda_{+}(\beta)\Lambda^{''}_{+}(\beta)]\beta_{\alpha}}{(\Lambda^{'}_{+}(\beta))^{2}}f(\alpha)+\frac{\Lambda_{+}(\beta)}{\Lambda^{'}_{+}(\beta)}f^{'}(\alpha)\right).\nonumber
  \end{eqnarray}
The assumptions (H4)-(H5) again give
$$\frac{d}{d\alpha}(t(\alpha,\beta),x(\alpha,\beta))\big|_{(\alpha,\beta)=(\alpha_{0},\beta_{0})}=(0,0),\:\text{but}\:\frac{d^{2}}{d\alpha^{2}}(t(\alpha,\beta),x(\alpha,\beta))\big|_{(\alpha,\beta)=(\alpha_{0},\beta_{0})}\neq(0,0).$$
Thus, the singular points $(\alpha,\beta)\neq(\alpha_{0},\beta_{0})$ are fold points, while $(\alpha_{0},\beta_{0})$ is a cusp point.
\end{proof}
\begin{lemma}
 Under the assumptions (H1)-(H5), $t_{0}$ is the unique minimum point on the interval $(\alpha_{0}-\epsilon,\alpha_{0}+\epsilon)$, where $\epsilon$ is given in Lemma 3.4.
\end{lemma}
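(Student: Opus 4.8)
The plan is to regard the blowup time along the singular curve as a function of a single variable and to show it is unimodal with its minimum at the cusp. Concretely, I would set
$$
T(\alpha):=t(\alpha,\beta(\alpha))=\int_{\alpha}^{\beta(\alpha)}\frac{1}{\Lambda_{+}(\zeta)-\Lambda_{-}(\zeta)}\,d\zeta,
$$
where $\beta(\alpha)=\Lambda_{+}^{-1}\Lambda_{-}(\alpha)$ parametrizes the singular locus $J=0$. By (3.12) we have $T(\alpha_{0})=t_{0}$, so it suffices to prove that $T$ attains its unique minimum on $(\alpha_{0}-\epsilon,\alpha_{0}+\epsilon)$ precisely at $\alpha=\alpha_{0}$, with the minimum value equal to $t_{0}$. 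This is exactly the statement that the blowup first occurs at the cusp point among all nearby points of the envelope.

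The key computation is already available from the proof of Lemma 3.5: taking the first component of $\frac{d}{d\alpha}(t(\alpha,\beta(\alpha)),x(\alpha,\beta(\alpha)))$ gives
$$
T'(\alpha)=\frac{1}{\Lambda^{'}_{+}(\beta(\alpha))}\,f(\alpha),
$$
with $f$ as in (3.6). Thus the critical points of $T$ on the interval are exactly the zeros of $f$, and by (H4) we have $T'(\alpha_{0})=0$. To identify the nature of this critical point, I would analyze the sign of $T'$ on each side of $\alpha_{0}$. By (H2), $\Lambda^{'}_{+}(\beta)<0$ throughout, so the prefactor $1/\Lambda^{'}_{+}(\beta(\alpha))$ is strictly negative. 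For the factor $f$: since $f(\alpha_{0})=0$ and $f'(\alpha_{0})<0$ by (H5), $f$ is positive immediately to the left of $\alpha_{0}$ and negative immediately to the right; by Lemma 3.4, $f$ has no other zero in $(\alpha_{0}-\epsilon,\alpha_{0}+\epsilon)$, so by continuity and the intermediate value theorem these signs persist on the whole punctured interval, i.e. $f>0$ on $(\alpha_{0}-\epsilon,\alpha_{0})$ and $f<0$ on $(\alpha_{0},\alpha_{0}+\epsilon)$. Multiplying the two factors yields $T'(\alpha)<0$ for $\alpha\in(\alpha_{0}-\epsilon,\alpha_{0})$ and $T'(\alpha)>0$ for $\alpha\in(\alpha_{0},\alpha_{0}+\epsilon)$.

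Hence $T$ is strictly decreasing to the left of $\alpha_{0}$ and strictly increasing to the right, so $\alpha_{0}$ is the unique minimizer on the interval and $t_{0}=T(\alpha_{0})$ is the unique minimum value. The only point requiring care is the propagation of the sign of $f$ across the full punctured interval rather than merely in an infinitesimal neighborhood of $\alpha_{0}$, which is precisely where Lemma 3.4 (absence of spurious zeros of $f$) is invoked; once that is in place, everything reduces to a direct sign count, so I do not expect a genuine obstacle here.
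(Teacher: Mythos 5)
Your proof is correct and follows essentially the same route as the paper: both restrict $t$ to the singular curve $\beta=\beta(\alpha)$ and use the formula $dt/d\alpha=f(\alpha)/\Lambda'_{+}(\beta(\alpha))$ from the proof of Lemma 3.5 together with (H2), (H4), (H5) and Lemma 3.4. The only difference is that the paper invokes the second-derivative test at $\alpha_{0}$ ($dt/d\alpha|_{\alpha_{0}}=0$, $d^{2}t/d\alpha^{2}|_{\alpha_{0}}>0$) while you run a first-derivative sign analysis over the whole punctured interval; your version is, if anything, the more complete one, since it makes explicit how the absence of other zeros of $f$ (Lemma 3.4) upgrades the local minimum to the unique minimum on all of $(\alpha_{0}-\epsilon,\alpha_{0}+\epsilon)$.
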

\begin{proof} From Lemma 3.4 and by a straightforward calculation we have
$$\frac{dt}{d\alpha}\bigg|_{\alpha=\alpha_{0}}=0,\quad \frac{d^{2}t}{d\alpha^{2}}\bigg|_{\alpha=\alpha_{0}}>0.
\eqno(3.20)$$
This proves the lemma.
\end{proof}
We next discuss the position and property of $\Pi(\alpha,\beta(\alpha))$ in the $(t,x)$-plane.

Introduce $\Upsilon_{l}$ as the graph of the curve $\beta=\beta(\alpha)$ with domain $(\alpha_{0}-\epsilon,\alpha_{0})$ and $\Upsilon_{r}$ as the graph of the curve $\beta=\beta(\alpha)$ with domain $(\alpha_{0},\alpha_{0}+\epsilon)$. Then we define $\Gamma_{l}=\Pi(\Upsilon_{l})\;\text{and}\;\Gamma_{r}=\Pi(\Upsilon_{r})$.
We have the following lemma.
\begin{lemma}Under the assumptions (H1)-(H5), $\Gamma_{l}$ and $\Gamma_{r}$ form a smooth curve in (t,x)-plane which can be defined by an explicit function $t=t(x)$, moreover, $\Gamma_{l}$ is increasing and concave with respect to $x$, $\Gamma_{r}$ is decreasing and concave with respect to $x$.
\end{lemma}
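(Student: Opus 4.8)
The plan is to push everything through the single parametrization $\alpha\mapsto \Pi(\alpha,\beta(\alpha))$ of the singular curve and to extract all four assertions from the signs of $f(\alpha)$, $\Lambda_{+}(\beta(\alpha))$ and $\Lambda'_{\pm}$. First I would recall from the computation in the proof of Lemma 3.5 that along $\beta=\beta(\alpha)$ one has
$$
\frac{dt}{d\alpha}=\frac{f(\alpha)}{\Lambda'_{+}(\beta)},\qquad \frac{dx}{d\alpha}=\frac{\Lambda_{+}(\beta)f(\alpha)}{\Lambda'_{+}(\beta)},
$$
so that, wherever $f(\alpha)\neq 0$,
$$
\frac{dt}{dx}=\frac{1}{\Lambda_{+}(\beta(\alpha))}.
$$
These two identities are the engine of the whole argument.

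Next I would fix the signs. By the normalization (3.10) we have $\Lambda_{+}(\beta_{0})=0$; since $\Lambda'_{+}<0$ by (H2) and $\beta(\alpha)$ is strictly increasing with $\beta(\alpha_{0})=\beta_{0}$ by Lemma 3.5(A), it follows that $\Lambda_{+}(\beta(\alpha))>0$ for $\alpha\in(\alpha_{0}-\epsilon,\alpha_{0})$ and $\Lambda_{+}(\beta(\alpha))<0$ for $\alpha\in(\alpha_{0},\alpha_{0}+\epsilon)$. Likewise, Lemma 3.4 gives $f(\alpha_{0})=0$ and $f(\alpha)\neq0$ on the punctured interval, while $f'(\alpha_{0})<0$ by (H5); hence $f(\alpha)>0$ on $(\alpha_{0}-\epsilon,\alpha_{0})$ and $f(\alpha)<0$ on $(\alpha_{0},\alpha_{0}+\epsilon)$. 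Substituting into the formula for $dx/d\alpha$ shows $dx/d\alpha<0$ on both punctured subintervals and $dx/d\alpha=0$ only at $\alpha_{0}$, so $x=x(\alpha)$ is strictly decreasing on $(\alpha_{0}-\epsilon,\alpha_{0}+\epsilon)$. This makes $\alpha\mapsto x$ a bijection onto an interval containing $x_{0}$ and therefore defines $t=t(x)$ as a single-valued graph, which is the first assertion. Reading off $dt/dx=1/\Lambda_{+}(\beta)$ then gives $dt/dx>0$ on $\Gamma_{l}$ (so $t$ is increasing in $x$, where $x>x_{0}$) and $dt/dx<0$ on $\Gamma_{r}$ (so $t$ is decreasing in $x$, where $x<x_{0}$).

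For concavity I would differentiate once more. Using $\beta_{\alpha}=\Lambda'_{-}(\alpha)/\Lambda'_{+}(\beta)$ from (3.19) together with the chain rule $\frac{d^{2}t}{dx^{2}}=(dx/d\alpha)^{-1}\frac{d}{d\alpha}\!\left(\frac{1}{\Lambda_{+}(\beta)}\right)$, a short computation yields
$$
\frac{d^{2}t}{dx^{2}}=-\,\frac{\Lambda'_{-}(\alpha)\,\Lambda'_{+}(\beta)}{\big(\Lambda_{+}(\beta)\big)^{3}\,f(\alpha)}.
$$
Since $\Lambda'_{-}\Lambda'_{+}>0$ by (H2), the factor $-\Lambda'_{-}\Lambda'_{+}$ is negative; on $\Gamma_{l}$ the denominator $(\Lambda_{+})^{3}f$ is positive (a positive cube times a positive $f$), and on $\Gamma_{r}$ it is again positive (a negative cube times a negative $f$). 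Hence $d^{2}t/dx^{2}<0$ on each branch, i.e.\ both $\Gamma_{l}$ and $\Gamma_{r}$ are concave.

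Finally, for smoothness: on each open branch $dx/d\alpha\neq0$, so $x$ is a smooth local coordinate and $t=t(x)$ is $C^{\infty}$ there; the two branches share the single endpoint $\Pi(\alpha_{0},\beta_{0})=(t_{0},x_{0})$ by continuity, and by Lemma 3.6 one has $t\geq t_{0}$ with equality only at $x_{0}$, so $\Gamma_{l}\cup\Gamma_{r}$ is one connected graph meeting $x=x_{0}$ from the two sides. The step I expect to be the main obstacle is not any single computation but the careful bookkeeping of signs across the junction $\alpha_{0}$, together with the correct reading of ``smooth curve'': because $dt/dx=1/\Lambda_{+}(\beta)\to\pm\infty$ as $\alpha\to\alpha_{0}^{\mp}$, the graph $t=t(x)$ has a vertical tangent at $(t_{0},x_{0})$, so smoothness must be asserted branchwise while the two arcs join only continuously there; checking that this degeneration is precisely the cusp already identified in Lemma 3.5(B) is what ties the statement together.
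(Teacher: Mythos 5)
Your proof is correct and follows essentially the same route as the paper: the parametric derivatives (3.21), the sign analysis of $f(\alpha)$ and $\Lambda_{+}(\beta(\alpha))$ on either side of $\alpha_{0}$, and the second-derivative computation (your expression $-\Lambda'_{-}(\alpha)\Lambda'_{+}(\beta)/\bigl((\Lambda_{+}(\beta))^{3}f(\alpha)\bigr)$ coincides with the paper's $-\Lambda'_{-}(\alpha)\Lambda'_{+}(\beta)/\bigl(\Lambda_{-}^{2}(\alpha)\Lambda_{+}(\beta)f(\alpha)\bigr)$ because $\Lambda_{-}(\alpha)=\Lambda_{+}(\beta)$ on the singular curve). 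Your additional points --- deriving the signs explicitly from (H2), (H5) and Lemma 3.4, and observing that $dt/dx\to\pm\infty$ at $(t_{0},x_{0})$ so that smoothness holds branchwise with a vertical tangent at the cusp --- are correct refinements of details the paper leaves implicit.
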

\begin{proof} By Lemma 3.5, we have
$$\frac{dt}{d\alpha}=\frac{f(\alpha)}{\Lambda^{'}_{+}(\beta)},\quad \frac{dx}{d\alpha}=\frac{\Lambda_{+}(\beta)}{\Lambda^{'}_{+}(\beta)}f(\alpha).
\eqno(3.21)
$$
According to (3.8) and (3.9),
$$
f(\alpha)>0,\quad\Lambda_{+}(\beta)>0,\quad\quad\forall\: \alpha\in(\alpha_{0}-\epsilon,\alpha_{0})
$$
and
$$
f(\alpha)<0,\quad\Lambda_{+}(\beta)<0,\quad\quad\forall\: \alpha\in(\alpha_{0},\alpha_{0}+\epsilon).
$$
So by (3.21) and (3.4),
$$
\frac{dt}{d\alpha}=\frac{f(\alpha)}{\Lambda^{'}_{+}(\beta)}<0,\quad \frac{dx}{d\alpha}=\frac{\Lambda_{+}(\beta)}{\Lambda^{'}_{+}(\beta)}f(\alpha)<0,\quad\forall\:
\alpha\in(\alpha_{0}-\epsilon,\alpha_{0}).
$$
By the implicit function theorem $\Gamma_{l}$ form a smooth curve $t=t(x)$. Moreover
$$\frac{dt}{dx}>0,
$$ so $\Gamma_{l}$ is increasing with respect to $x$.

On the other hand, for $\alpha\in(\alpha_{0},\alpha_{0}+\epsilon)$ it holds that
$$
\frac{dt}{d\alpha}=\frac{f(\alpha)}{\Lambda^{'}_{+}(\beta)}>0,\quad \frac{dx}{d\alpha}=\frac{\Lambda_{+}(\beta)}{\Lambda^{'}_{+}(\beta)}f(\alpha)<0.
$$
This gives
$$\frac{dt}{dx}<0.
$$ Thus, $\Gamma_{r}$ is decreasing with respect to $x$.

Moreover, since
$$
\frac{d^{2}t}{dx^{2}}=\frac{d}{dx}(\frac{dt}{dx})=\frac{d}{dx}(\frac{1}{\Lambda_{-}(\alpha)})=\frac{d}{d\alpha}(\frac{1}{\Lambda_{-}(\alpha)})\frac{d\alpha}{dx}
=-\frac{\Lambda_{-}^{'}(\alpha)\Lambda_{+}^{'}(\beta)}{\Lambda_{-}^{2}(\alpha)\Lambda_{+}(\beta)f(\alpha)}<0,
$$
we have
$$
\frac{d^{2}t}{dx^{2}}<0,\quad \forall\: \alpha\in(\alpha_{0}-\epsilon,\alpha_{0}).
$$
This implies that $\Gamma_{l}$ is concave with respect to $x$. Similarly, we have

$$
\frac{d^{2}t}{dx^{2}}<0,\quad \forall\:\alpha\in(\alpha_{0},\alpha_{0}+\epsilon),
$$
namely, $\Gamma_{r}$ is concave with respect to $x$.
\end{proof}
Based on the properties derived in Lemmas 3.5-3.7, we can sketch the map from $(\alpha,\beta)$ to $(t,x)$ (see Figure 4).
\begin{figure}[h]
\centering
\includegraphics[trim=0 0 0 0,scale=0.80]{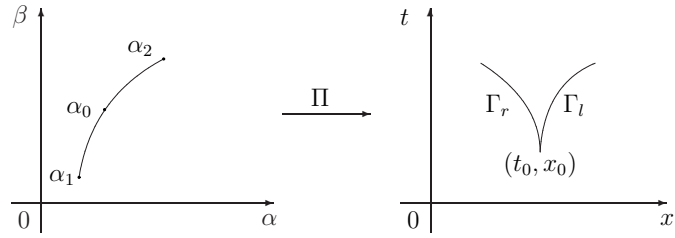}
\caption{ The mapping $\Pi$ under the assumptions (H1)-(H5). }
\end{figure}

\begin{remark}
From $(t_{0},x_{0})$, there exist only two characteristics which intersect the $x$-axis at $\alpha_{0}$ and $\beta_{0}$, respectively. Then, at $(t_{0},x_{0})$ it holds that $$\frac{dx}{dt}=\Lambda_{-}(\alpha_{0})=\Lambda_{+}(\beta_{0})=0,$$ namely, the two characteristics are tangent at $(t_{0},x_{0})$.
\end{remark}
Lemmas 2.1, 3.1, 3.5 and 3.6 lead to the following main result.
\begin{theorem}
Under the assumptions (H1)-(H5), the smooth solution of Cauchy problem (2.5) and (2.7) blows up at $(t_{0},x_{0})$, which is defined by (3.12)-(3.13), and $t_{0}$ is the blowup time. Furthermore, the blowup is geometric blowup.
\end{theorem}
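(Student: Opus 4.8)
The plan is to work entirely in the characteristic coordinates $(\alpha,\beta)$ supplied by Lemma 2.2 and to detect the blowup through the degeneration of the map $\Pi$. First I would record, via Lemma 2.1, that along characteristics the Riemann invariants are frozen, namely $\lambda_{-}(t,x)=\Lambda_{-}(\alpha(t,x))$ and $\lambda_{+}(t,x)=\Lambda_{+}(\beta(t,x))$. Since $\Lambda_{\pm}$ are the prescribed $C^{2}$ initial profiles, both $\lambda_{-}$ and $\lambda_{+}$ stay uniformly bounded on the closure of $D(t_{0})$; this is the half of the geometric-blowup statement asserting that the solution itself does not become infinite. It remains to show that the first derivatives blow up, and that this happens exactly at $(t_{0},x_{0})$ at time $t_{0}$.

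For the location of the singularity I would invoke formula $(3.2)$ of Lemma 3.1 along the characteristic $x=x^{-}(t,\alpha_{0})$:
$$
\frac{\partial\lambda_{-}(t,x)}{\partial x}\bigg|_{x=x^{-}(t,\alpha_{0})}=\Lambda^{'}_{-}(\alpha_{0})\,\frac{\Lambda_{+}(\alpha_{0})-\Lambda_{-}(\alpha_{0})}{\lambda_{+}(t,x^{-}(t,\alpha_{0}))-\Lambda_{-}(\alpha_{0})}.
$$
By Lemma 2.1 the quantity $\lambda_{+}(t,x^{-}(t,\alpha_{0}))$ equals $\Lambda_{+}(\beta)$ at the corresponding characteristic coordinate, and at $(t_{0},x_{0})=\Pi(\alpha_{0},\beta_{0})$ it takes the value $\Lambda_{+}(\beta_{0})=0=\Lambda_{-}(\alpha_{0})$ by (H3) and the normalization $(3.10)$. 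Hence the denominator tends to $0$, while by (H1) and (H2) the numerator $\Lambda^{'}_{-}(\alpha_{0})(\Lambda_{+}(\alpha_{0})-\Lambda_{-}(\alpha_{0}))$ is a nonzero constant; therefore $\partial_{x}\lambda_{-}\to\infty$ as $(t,x)\to(t_{0},x_{0})$, and the symmetric statement in Lemma 3.1 gives $\partial_{x}\lambda_{+}\to\infty$ as well. Equivalently, a direct computation of the Jacobian yields
$$
J(\alpha,\beta)=\frac{\Lambda_{-}(\alpha)-\Lambda_{+}(\beta)}{(\Lambda_{+}(\alpha)-\Lambda_{-}(\alpha))(\Lambda_{+}(\beta)-\Lambda_{-}(\beta))},
$$
so that $J=0$ precisely on $\Sigma$; since $\alpha_{x}=-t_{\beta}/J$ with $t_{\beta}=1/(\Lambda_{+}(\beta)-\Lambda_{-}(\beta))>0$, the relation $\partial_{x}\lambda_{-}=\Lambda^{'}_{-}(\alpha)\alpha_{x}$ shows the gradient blows up exactly where $\Pi$ degenerates. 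This is the geometric character of the blowup: it is produced by the vanishing of $J$, i.e. by the fold/cusp degeneration of the characteristic map analyzed in Lemma 3.5, rather than by the solution leaving a bounded set.

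To see that $t_{0}$ is the blowup time, and not merely a time at which a singularity happens to occur, I would use Lemmas 3.5 and 3.6. Lemma 3.5 identifies the singular set $J=0$ with the graph $\beta=\beta(\alpha)$ and shows that $(\alpha_{0},\beta_{0})$ is the cusp point, while Lemma 3.6 shows that $t(\alpha,\beta(\alpha))$ attains its unique minimum $t_{0}$ at $\alpha=\alpha_{0}$ on $(\alpha_{0}-\epsilon,\alpha_{0}+\epsilon)$. Thus, near the cusp, every singular point is mapped by $\Pi$ to a time $\geq t_{0}$, so no degeneration of $\Pi$ reaches the strip $\{t<t_{0}\}$; combined with the existence-uniqueness theorem quoted before the statement, which guarantees a $C^{1}$ solution on $D(t_{0})$, this pins down $t_{0}$ as the life span. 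Collecting the three facts, $\lambda_{\pm}$ bounded, $\partial_{x}\lambda_{\pm}\to\infty$ at $(t_{0},x_{0})$, and $(t_{0},x_{0})$ being the earliest image of the singular curve, yields the theorem.

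The step I expect to be the main obstacle is the minimality claim, i.e. upgrading the local behaviour near the cusp (Lemmas 3.5--3.6) to the assertion that $t_{0}$ is genuinely the first blowup time. Lemma 3.6 controls $t(\alpha,\beta(\alpha))$ only on the small interval $(\alpha_{0}-\epsilon,\alpha_{0}+\epsilon)$, so one has to argue, using (H1)--(H2) and the monotonicity $\beta_{\alpha}>0$ from Lemma 3.5, that no other portion of the singular curve $\Sigma$ produces a smaller time; the care needed here is in ruling out earlier degenerations coming from distant parts of the initial data while keeping the solution $C^{1}$ on all of $D(t_{0})$. The remaining computations, the Jacobian identity and the limit in $(3.2)$, are routine once the characteristic representation of Lemma 2.2 is in hand.
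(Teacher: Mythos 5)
Your argument is correct and follows essentially the same route as the paper, whose entire proof is the single line ``Lemmas 2.1, 3.1, 3.5 and 3.6 lead to the following main result''; you assemble exactly those ingredients (boundedness of $\lambda_{\pm}$ from Lemma 2.1, derivative blowup from (3.2), the fold/cusp and minimum structure from Lemmas 3.5--3.6), and your explicit Jacobian identity is a correct supplement consistent with the paper's remark that singular points of $\Pi$ are characterized by $\Lambda_{-}(\alpha)=\Lambda_{+}(\beta)$. The global-minimality gap you flag --- Lemma 3.6 controls $t(\alpha,\beta(\alpha))$ only on $(\alpha_{0}-\epsilon,\alpha_{0}+\epsilon)$, so earlier degenerations arising from distant portions of $\Sigma$ are not ruled out --- is genuine, but it is equally unaddressed in the paper, which simply declares $t_{0}$ to be the life span.
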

\begin{remark}
The geometric blowup comes from Alinhac \cite{A}. Roughly speaking, the solution itself keeps bounded, however, the derivatives of first order go to infinity when $(t,x)$ tends to the blowup point.
\end{remark}
\begin{remark}
In the domain bounded by $\Gamma_{l}$ and $\Gamma_{r}$, characteristics of the same family must intersect, see Figure 5.
\end{remark}
\begin{figure}[h]
\centering
\includegraphics[trim=0 0 0 0,scale=0.80]{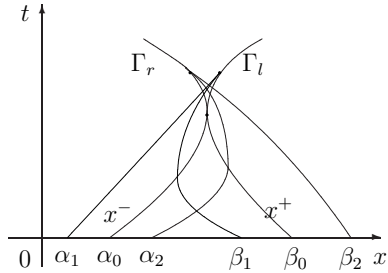}
 \caption{ The characteristics }
\end{figure}

\section{Estimates of singularities}
In this section we shall establish some estimates for the solution near the blowup point, these estimates describe the behavior of singularities near the blowup point. In what follows, we focus on the domain
$$
O_{\epsilon}=\{(t,x)\mid(t-t_{0})^{2}+(x-x_{0})^{2}<\epsilon^{2}\}.
$$
Let
$$\widetilde{t}= t-t_{0},\quad \widetilde{x}= x-x_{0},\quad \widetilde{\alpha}=\alpha-\alpha_{0},\quad \widetilde{\beta}=\beta-\beta_{0}.$$
\begin{remark}
Throughout the paper, without special notations, the above symbols are adopted to denote the differences of the vector components between regular points and the blowup point.
\end{remark}

We have the following theorem
\begin{theorem}
Under the assumptions (H1)-(H5), in the neighborhood of the blowup point, it holds that for any $ (t,x)\in O_{\epsilon}\setminus(t_{0},x_{0}) $

if $\widetilde{x}=o(|\widetilde{t}|^{\frac{3}{2}}),$
\begin{displaymath}
\left\{\begin{array}{ll}
|u(t,x)-u(t_{0},x_{0})|\leq F_{1}|\widetilde{t}|+F_{2}|\frac{\widetilde{x}}{\widetilde{t}}|, \vspace{2mm}\\
 |\rho|\leq F_{3}|\widetilde{t}|^{-1},\vspace{2mm}\\
|u_{x}|\leq  F_{4}|\widetilde{t}|^{-1},\vspace{2mm}\\
|u_{t}|\leq F_{5}+F_{6}\frac{|\widetilde{x}|}{|\widetilde{t}|^{2}},\vspace{2mm}\\
|\rho_{x}|\leq F_{7}|\widetilde{t}|^{-3},\vspace{2mm}\\
|\rho_{t}|\leq\frac{F_{8}}{|\widetilde{t}|^{2}}+F_{9}\frac{|\widetilde{x}|}{|\widetilde{t}|^{3}},\vspace{2mm}\\
\end{array}
\right.
\end{displaymath}

if $|\widetilde{t}|^{\frac{3}{2}}=o(\widetilde{x}),$
\begin{displaymath}
\left\{\begin{array}{ll}
|u(t,x)-u(t_{0},x_{0})|\leq F_{10}|\widetilde{x}|^{\frac{1}{3}},\vspace{2mm}\\
|\rho|\leq F_{11}|\widetilde{x}|^{-\frac{2}{3}},\vspace{2mm}\\
|u_{x}|\leq F_{12}|\widetilde{x}|^{-\frac{2}{3}},\vspace{2mm}\\
|u_{t}|\leq F_{13}|\widetilde{x}|^{-\frac{1}{3}},\vspace{2mm}\\
|\rho_{x}|\leq F_{14}|\widetilde{x}|^{-2},\vspace{2mm}\\
|\rho_{t}|\leq F_{15}|\widetilde{x}|^{-\frac{5}{3}},\vspace{2mm}\\
\end{array}
\right.
\end{displaymath}

if $\widetilde{x}=O(1)|\widetilde{t}|^{\frac{3}{2}},$
\begin{displaymath}
\left\{\begin{array}{ll}
|u(t,x)-u(t_{0},x_{0})|\leq F_{16}|\widetilde{x}|^{\frac{1}{3}},\vspace{2mm}\\
|\rho|\leq F_{17}|\widetilde{t}|^{-1},\vspace{2mm}\\
|u_{x}|\leq F_{18}|\widetilde{t}|^{-1},\vspace{2mm}\\
|u_{t}|\leq F_{19}|\widetilde{t}|^{-\frac{1}{2}},\vspace{2mm}\\
|\rho_{x}|\leq F_{20}|\widetilde{t}|^{-3},\vspace{2mm}\\
|\rho_{t}|\leq F_{21}|\widetilde{t}|^{-\frac{5}{2}},
\end{array}
\right.
\end{displaymath}
for sufficiently small $\widetilde{t}$ and $\widetilde{x}$, where $F_{i}$ $(i=1,2,\cdots,21)$ are positive constants depend only on the initial data at $(\alpha_{0},\beta_{0})$ and the symbol $O(1)$ denote a quantity whose absolute value is bounded depending on the relationship between $\widetilde{x}$ and $|\widetilde{t}|^{\frac{3}{2}}$ when $\widetilde{x}$ is sufficiently small. \end{theorem}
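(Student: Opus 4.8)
The plan is to make everything explicit in the characteristic coordinates $(\alpha,\beta)$ and then read off the behaviour near the blowup point from the local normal form of the map $\Pi$. By Lemma 2.1 the Riemann invariants are frozen along the conjugate characteristics, so $\lambda_-(t,x)=\Lambda_-(\alpha)$ and $\lambda_+(t,x)=\Lambda_+(\beta)$; hence
$$u=\tfrac12\big(\Lambda_+(\beta)+\Lambda_-(\alpha)\big),\qquad \rho=\frac{2\mu}{\Lambda_+(\beta)-\Lambda_-(\alpha)}.$$
Inverting the Jacobian matrix (3.15) gives $\partial_x\lambda_-=-\Lambda_-'(\alpha)/\big(D(\beta)J\big)$ and $\partial_x\lambda_+=-\Lambda_+'(\beta)/\big(D(\alpha)J\big)$, with $D(\zeta)=\Lambda_+(\zeta)-\Lambda_-(\zeta)>0$, while (2.5) converts the time derivatives into $\partial_t\lambda_\pm=-\lambda_\mp\,\partial_x\lambda_\pm$. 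Thus all six quantities $u,\rho,u_x,u_t,\rho_x,\rho_t$ become explicit rational expressions in $\Lambda_\pm,\Lambda_\pm',\Lambda_\pm''$ evaluated at $(\alpha,\beta)$, divided by powers of $J$.

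Next I would record the local structure at the blowup point. Write $a_1=\Lambda_-'(\alpha_0)<0$, $b_1=\Lambda_+'(\beta_0)<0$, $D_0^\alpha=D(\alpha_0)>0$, $D_0^\beta=D(\beta_0)>0$, and recall from (3.10) that $\Lambda_-(\alpha_0)=\Lambda_+(\beta_0)=0$, so that assumption (H4) reads precisely $f(\alpha_0)=a_1/D_0^\beta-b_1/D_0^\alpha=0$. Introducing $\sigma:=a_1\widetilde\alpha-b_1\widetilde\beta$, one has $\Lambda_+(\beta)-\Lambda_-(\alpha)=-\sigma+O(2)$ and $J=\sigma/(D_0^\alpha D_0^\beta)+O(2)$, so $\sigma$ measures the distance to the singular set $J=0$. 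Consequently $|\rho|,|u_x|\sim|\sigma|^{-1}$. The quantity governing $\rho_x$ is $\partial_x\lambda_+-\partial_x\lambda_-=J^{-1}\big[-\Lambda_+'(\beta)/D(\alpha)+\Lambda_-'(\alpha)/D(\beta)\big]$, whose bracket equals $f(\alpha_0)=0$ at the cusp; this built-in cancellation costs an extra power of $|\sigma|^{-1}$ and is the source of $|\rho_x|\sim|\sigma|^{-3}$. Finally, because the characteristics are linearly degenerate the speeds $\lambda_\mp\to0$ at the blowup point, so $|u_t|\sim|\lambda_\mp|\,|\sigma|^{-1}$ is tempered by the small factor $|\lambda_\mp|\sim(|\widetilde\alpha|+|\widetilde\beta|)$.

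It then remains to convert $(\widetilde\alpha,\widetilde\beta)$ and $\sigma$ into $(\widetilde t,\widetilde x)$, and here the cusp enters. Taylor expanding $t$ and $x$, and using that $x_\alpha=x_\beta=0$ at the blowup point (since $\Lambda_-(\alpha_0)=\Lambda_+(\beta_0)=0$) together with the two cancellations forced by (H4), I obtain in the coordinates $p=\widetilde\alpha$, $q=\sigma$ the Whitney cusp normal form
$$\widetilde t=\lambda q+C_1p^2+O(3),\qquad \widetilde x=G\,pq+E\,p^3+(\text{lower order}),\qquad \lambda,C_1>0,\ E\neq0;$$
the image of the singular curve $q=0$ is $\widetilde x\sim|\widetilde t|^{3/2}$, which is exactly the threshold distinguishing the three regimes. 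Substituting $q=(\widetilde t-C_1p^2)/\lambda$ and balancing dominant terms gives $q\sim\widetilde t$, $|p|\lesssim|\widetilde x|/|\widetilde t|$ when $\widetilde x=o(|\widetilde t|^{3/2})$; $p\sim|\widetilde x|^{1/3}$, $q\sim|\widetilde x|^{2/3}$ when $|\widetilde t|^{3/2}=o(\widetilde x)$; and $p\sim|\widetilde t|^{1/2}\sim|\widetilde x|^{1/3}$, $q\sim\widetilde t$ in the borderline regime.

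Feeding these scalings into the formulas of the first step and collecting leading terms yields every stated bound: $u-u(t_0,x_0)$ is linear in $(\widetilde\alpha,\widetilde\beta)$ (giving $F_1|\widetilde t|+F_2|\widetilde x/\widetilde t|$ in the inner regime and $|\widetilde x|^{1/3}$ otherwise); $|\rho|,|u_x|\sim|\sigma|^{-1}$; $|\rho_x|\lesssim|\sigma|^{-3}$ after bounding the vanishing bracket crudely by a constant; and $|u_t|\sim|\lambda_\mp|\,|\sigma|^{-1}$. For $\rho_t=(\partial_t\lambda_+-\partial_t\lambda_-)/W^2$ with $W=\lambda_+-\lambda_-$, a direct second-order expansion in the inner regime produces the cross term $\widetilde\alpha\widetilde\beta\sim\widetilde x$ in the numerator, which generates the piece $F_9|\widetilde x|/|\widetilde t|^3$ beside the leading $F_8/|\widetilde t|^2$; in the outer and borderline regimes it is simplest to use the continuity equation $\rho_t=-u\rho_x-\rho u_x$ with the bounds already established, which gives the (non-sharp) exponents $|\widetilde x|^{-5/3}$ and $|\widetilde t|^{-5/2}$. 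I expect the main obstacle to be the bookkeeping in the two middle steps: one must check that exactly the right Taylor coefficients are annihilated by (H4), carry the expansions far enough to capture the $\widetilde\alpha\widetilde\beta$ cross term, and verify that each stated upper bound holds uniformly across its regime, including the sub-cases $\widetilde x\lesssim\widetilde t^2$ and $\widetilde x\gtrsim\widetilde t^2$ concealed inside the inner regime.
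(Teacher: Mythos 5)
Your overall strategy is the same as the paper's: freeze the Riemann invariants along characteristics so that $u=\tfrac12(\Lambda_+(\beta)+\Lambda_-(\alpha))$ and $\rho=2\mu/(\Lambda_+(\beta)-\Lambda_-(\alpha))$, obtain the spatial derivatives by inverting the Jacobian of $\Pi$ (equivalently the paper's citation of (3.2)), reduce the map $(\widetilde\alpha,\widetilde\beta)\mapsto(\widetilde t,\widetilde x)$ to the cubic normal form (the paper's $\widetilde x=B_3\widetilde\alpha^3+B_2\widetilde t\,\widetilde\alpha+B_1\widetilde t^{\,2}$ in (4.9), which it solves explicitly by Cardano in Lemmas 4.1--4.2 where you balance dominant terms), and substitute the three-regime scalings back. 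Your use of the continuity equation for $\rho_t$ in the outer and borderline regimes is a mild variation that works only because the normalization (3.10) forces $u(t_0,x_0)=0$; otherwise $|u\rho_x|$ would already exceed the claimed $|\widetilde x|^{-5/3}$ bound.

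The one place where the sketch has a genuine gap is the identification of $W:=\Lambda_+(\beta)-\Lambda_-(\alpha)$ with its linear part $-\sigma=-(a_1\widetilde\alpha-b_1\widetilde\beta)$, and the resulting claim $|\rho|,|u_x|\sim|\sigma|^{-1}$. After eliminating $\widetilde\beta$, assumption (H4) kills the coefficient of $\widetilde\alpha$ in $\sigma$, so $\sigma=b_1\Lambda_-(\beta_0)\widetilde t+O(\widetilde\alpha^2)$; in the outer regime $|\widetilde t|^{3/2}=o(\widetilde x)$ one has $\widetilde t=o(\widetilde\alpha^2)$, so the discarded corrections $\tfrac12(\Lambda_+''(\beta_0)\widetilde\beta^2-\Lambda_-''(\alpha_0)\widetilde\alpha^2)$ in $W=-\sigma+O(2)$ are of the \emph{same} order $\widetilde\alpha^2\sim|\widetilde x|^{2/3}$ as $\sigma$ itself. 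Since every estimate on $\rho,u_x,\rho_x,\rho_t$ is an upper bound, you need the lower bound $|W|\gtrsim|\widetilde x|^{2/3}$ there, i.e.\ you must assemble the full quadratic coefficient of $W$ in $\widetilde\alpha$ and show it is nonzero; this is exactly the paper's constant $M$ in Lemma 4.3, whose non-vanishing is derived from (H5) through (3.11) and (4.6). Your plan attributes all cancellations to (H4) and never invokes (H5) at this step, so as written it does not exclude $W=o(|\widetilde x|^{2/3})$, which would invalidate the outer-regime bounds. Two smaller slips that do not affect the upper bounds: the vanishing of the bracket in $\partial_x\lambda_+-\partial_x\lambda_-$ at the cusp makes $\rho_x$ \emph{smaller}, not larger (so it does not ``cost'' a power of $|\sigma|^{-1}$; the crude bound $|\rho_x|\lesssim|W|^{-3}$ simply ignores the gain); and in the inner regime $\widetilde\alpha\widetilde\beta\sim\widetilde t^{\,2}$, not $\widetilde x$ --- the $|\widetilde x|$-proportional piece of $\rho_t$ comes from the $\widetilde x/\widetilde t$ term in $\widetilde\alpha$ and is in any case dominated by $F_8|\widetilde t|^{-2}$.
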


 It follows from (2.10), (2.11), (3.12) and (3.13) that
 \begin{align*}
\begin{split}
\widetilde{x}&=\frac{1}{2}\left\{\alpha-\alpha_{0}+\beta-\beta_{0}+\int_{\alpha}^{\beta}\frac{\Lambda_{+}(\zeta)+\Lambda_{-}(\zeta)}
{\Lambda_{+}(\zeta)-\Lambda_{-}(\zeta)}d\zeta-\int_{\alpha_{0}}^{\beta_{0}}\frac{\Lambda_{+}(\zeta)+\Lambda_{-}(\zeta)}
{\Lambda_{+}(\zeta)-\Lambda_{-}(\zeta)}d\zeta\right\}\\
&=\int_{\alpha_{0}}^{\alpha}\frac{-\Lambda_{-}(\zeta)}{\Lambda_{+}(\zeta)-\Lambda_{-}(\zeta)}d\zeta
+\int_{\beta_{0}}^{\beta}\frac{\Lambda_{+}(\zeta)}{\Lambda_{+}(\zeta)-\Lambda_{-}(\zeta)}d\zeta\\
&=\frac{\Lambda_{+}(\beta_{0})}{\Lambda_{+}(\beta_{0})-\Lambda_{-}(\beta_{0})}\widetilde{\beta}-
\frac{\Lambda_{-}(\alpha_{0})\widetilde{\alpha}}{\Lambda_{+}(\alpha_{0})-\Lambda_{-}(\alpha_{0})}\\
&\quad+\frac{1}{2}\left(\frac{\Lambda^{'}_{+}(\beta_{0})}{\Lambda_{+}(\beta_{0})-\Lambda_{-}(\beta_{0})}-
\frac{\Lambda_{+}(\beta_{0})(\Lambda^{'}_{+}(\beta_{0})-\Lambda^{'}_{-}(\beta_{0}))}
{(\Lambda_{+}(\beta_{0})-\Lambda_{-}(\beta_{0}))^{2}}\right)\widetilde{\beta}^{2}\\
&\quad-\frac{1}{2}
\left(\frac{\Lambda^{'}_{-}(\alpha_{0})}{\Lambda_{+}(\alpha_{0})-\Lambda_{-}(\alpha_{0})}-\frac{(\Lambda^{'}_{+}(\alpha_{0})-\Lambda^{'}_{-}(\alpha_{0}))\Lambda_{-}(\alpha_{0})}
{(\Lambda_{+}(\alpha_{0})-\Lambda_{-}(\alpha_{0}))^{2}}\right)\widetilde{\alpha}^{2}\\
&\quad+\frac{1}{6}\left[\frac{\Lambda^{''}_{+}(\beta_{0})}{\Lambda_{+}(\beta_{0})-\Lambda_{-}(\beta_{0})}
-\frac{\Lambda^{'}_{+}(\beta_{0})\left(\Lambda^{'}_{+}(\beta_{0})-\Lambda^{'}_{-}(\beta_{0})\right)}{(\Lambda_{+}(\beta_{0})-\Lambda_{-}(\beta_{0}))^{2}}\right.\\
&\quad-\frac{\Lambda^{'}_{+}(\beta_{0})\left(\Lambda^{'}_{+}(\beta_{0})-\Lambda^{'}_{-}(\beta_{0})\right)
+\Lambda_{+}(\beta_{0})\left(\Lambda^{''}_{+}(\beta_{0})-\Lambda^{''}_{-}(\beta_{0})\right)}{(\Lambda_{+}(\beta_{0})-\Lambda_{-}(\beta_{0}))^{2}}
\\
&\quad\left.+\frac{2\Lambda_{+}(\beta_{0})\left(\Lambda^{'}_{+}(\beta_{0})-\Lambda^{'}_{-}(\beta_{0})\right)^{2}}{(\Lambda_{+}(\beta_{0})-\Lambda_{-}(\beta_{0}))^{3}}\right]\widetilde{\beta}^{3}
\\
&\quad-\frac{1}{6}\left[\frac{\Lambda^{''}_{-}(\alpha_{0})}{\Lambda_{+}(\alpha_{0})-\Lambda_{-}(\alpha_{0})}-\frac{2\Lambda^{'}_{-}(\alpha_{0})\left(\Lambda^{'}_{+}(\alpha_{0})-\Lambda^{'}_{-}(\alpha_{0})\right)}
{(\Lambda_{+}(\alpha_{0})-\Lambda_{-}(\alpha_{0}))^{2}}\right.\\
&\left.\quad-\frac{\Lambda_{+}(\alpha_{0})\left(\Lambda^{''}_{+}(\alpha_{0})-\Lambda^{''}_{-}(\alpha_{0})\right)}{(\Lambda_{+}(\alpha_{0})-\Lambda_{-}(\alpha_{0}))^{2}}
+\frac{2\Lambda_{-}(\alpha_{0})\left(\Lambda^{'}_{+}(\alpha_{0})-\Lambda'_{-}(\alpha_{0})\right)^{2}}{(\Lambda_{+}(\alpha_{0})-\Lambda_{-}(\alpha_{0}))^{3}}\right]\widetilde{\alpha}^{3}
\end{split} \tag{4.1}
\end{align*}
and
\begin{align*}
\begin{split}
\widetilde{t}&=\int_{\alpha}^{\beta}\frac{1}{\Lambda_{+}(\zeta)-\Lambda_{-}(\zeta)}d\zeta-\int_{\alpha_{0}}^{\beta_{0}}\frac{1}{\Lambda_{+}(\zeta)-\Lambda_{-}(\zeta)}d\zeta\\
&=\int_{\alpha}^{\alpha_{0}}\frac{1}{\Lambda_{+}(\zeta)-\Lambda_{-}(\zeta)}d\zeta+\int_{\beta_{0}}^{\beta}\frac{1}{\Lambda_{+}(\zeta)-\Lambda_{-}(\zeta)}d\zeta\\
&=\frac{\widetilde{\beta}}{\Lambda_{+}(\beta_{0})-\Lambda_{-}(\beta_{0})}-\frac{1}{2}\frac{\Lambda^{'}_{+}(\beta_{0})-\Lambda^{'}_{-}(\beta_{0})}
{(\Lambda_{+}(\beta_{0})-\Lambda_{-}(\beta_{0}))^{2}}\widetilde{\beta}^{2}\\
&\quad+ \frac{1}{6}\left(\frac{\Lambda^{''}_{-}(\beta_{0})
-\Lambda^{''}_{+}(\beta_{0})}{(\Lambda_{+}(\beta_{0})-\Lambda_{-}(\beta_{0}))^{2}}
+\frac{2(\Lambda^{'}_{+}(\beta_{0})-\Lambda^{'}_{-}(\beta_{0}))^{2}}
{(\Lambda_{+}(\beta_{0})-\Lambda_{-}(\beta_{0}))^{3}}\right)\widetilde{\beta}^{3}\\
&\quad- \left(\frac{\widetilde{\alpha}}{\Lambda_{+}(\alpha_{0})-\Lambda_{-}(\alpha_{0})}
-\frac{1}{2}\frac{\Lambda^{'}_{+}(\alpha_{0})-\Lambda^{'}_{-}(\alpha_{0})}
{(\Lambda_{+}(\alpha_{0})-\Lambda_{-}(\alpha_{0}))^{2}}\widetilde{\alpha}^{2}\right)\\
&\quad - \frac{1}{6}\left(\frac{\Lambda^{''}_{-}(\alpha_{0})-\Lambda^{''}_{+}(\alpha_{0})}{(\Lambda_{+}(\alpha_{0})-\Lambda_{-}(\alpha_{0}))^{2}}+
\frac{2(\Lambda^{'}_{+}(\alpha_{0})-\Lambda^{'}_{-}(\alpha_{0}))^{2}}{(\Lambda_{+}(\alpha_{0})-\Lambda_{-}(\alpha_{0}))^{3}}\right)\widetilde{\alpha}^{3}.
\end{split}  \tag{4.2}
\end{align*}

To prove Theorem 4.1, we need the following lemmas.
\begin{lemma}
Under the assumptions (H1)-(H5), it holds that
\begin{align*}
\begin{split}
\widetilde{\alpha}&=\left(-\frac{1}{2}(C_{2}\widetilde{t}^{2}+C_{3}\widetilde{x})+\left(\frac{1}{4}(C_{2}\widetilde{t}^{2}+C_{3}\widetilde{x})^{2}
+\frac{C_{1}^{3}\widetilde{t}^{3}}{27}\right)^{\frac{1}{2}}\right)^{\frac{1}{3}}\\
& \quad+\left(-\frac{1}{2}(C_{2}\widetilde{t}^{2}+C_{3}\widetilde{x})-\left(\frac{1}{4}(C_{2}\widetilde{t}^{2}+C_{3}\widetilde{x})^{2}
+\frac{C_{1}^{3}\widetilde{t}^{3}}{27}\right)^{\frac{1}{2}}\right)^{\frac{1}{3}},
\end{split}\tag{4.3}
\end{align*}
where $C_{i}$ ($i=1,2,3$) depend only on the values of initial data at $(\alpha_{0},\beta_{0})$.
\end{lemma}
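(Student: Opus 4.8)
The plan is to collapse the two third–order expansions (4.1)--(4.2) into a single depressed cubic equation in $\widetilde{\alpha}$ whose Cardano solution is precisely (4.3). The decisive structural observation is that the normalization (3.10), $\Lambda_{-}(\alpha_{0})=\Lambda_{+}(\beta_{0})=0$, annihilates the linear part of $\widetilde{x}$ in (4.1): the coefficients $\Lambda_{+}(\beta_{0})/(\Lambda_{+}(\beta_{0})-\Lambda_{-}(\beta_{0}))$ and $\Lambda_{-}(\alpha_{0})/(\Lambda_{+}(\alpha_{0})-\Lambda_{-}(\alpha_{0}))$ are both zero, so $\widetilde{x}$ begins at quadratic order, whereas in (4.2) the coefficient $1/(\Lambda_{+}(\beta_{0})-\Lambda_{-}(\beta_{0}))$ of the linear term $\widetilde{\beta}$ is nonzero. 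This mismatch of orders is exactly the analytic signature of the cusp established in Lemma 3.5, and it is what will force the leading dependence of $\widetilde{x}$ on $\widetilde{\alpha}$ to be cubic.

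First I would invert (4.2) to express $\widetilde{\beta}$ as a function of $(\widetilde{\alpha},\widetilde{t})$. Since $\partial\widetilde{t}/\partial\widetilde{\beta}=1/(\Lambda_{+}(\beta_{0})-\Lambda_{-}(\beta_{0}))\neq0$ at the origin, the implicit function theorem furnishes a Taylor expansion whose linear part reads
$$\widetilde{\beta}=(\Lambda_{+}(\beta_{0})-\Lambda_{-}(\beta_{0}))\,\widetilde{t}+\frac{\Lambda_{+}(\beta_{0})-\Lambda_{-}(\beta_{0})}{\Lambda_{+}(\alpha_{0})-\Lambda_{-}(\alpha_{0})}\,\widetilde{\alpha}+(\text{higher order}).$$
Substituting this series into (4.1) expresses $\widetilde{x}$ purely in terms of $(\widetilde{\alpha},\widetilde{t})$; note that the $\widetilde{\alpha}$ and $\widetilde{\beta}$ contributions in (4.1) are already decoupled (no cross term), which keeps the substitution tractable.

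The heart of the matter is the coefficient count after substitution. Collecting the quadratic terms, the coefficient of $\widetilde{\alpha}^{2}$ in $\widetilde{x}(\widetilde{\alpha},\widetilde{t})$ turns out to be a nonzero multiple of
$$\Lambda^{'}_{-}(\alpha_{0})\Lambda_{+}(\alpha_{0})+\Lambda^{'}_{+}(\beta_{0})\Lambda_{-}(\beta_{0}),$$
and this expression vanishes identically by assumption (H4): indeed $f(\alpha_{0})=0$ means $\Lambda^{'}_{-}(\alpha_{0})/(-\Lambda_{-}(\beta_{0}))=\Lambda^{'}_{+}(\beta_{0})/\Lambda_{+}(\alpha_{0})$, which is exactly the stated identity. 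This is the cancellation that removes the would-be $\widetilde{\alpha}^{2}$ term. Collecting the cubic terms — the genuine $\widetilde{\alpha}^{3}$ and $\widetilde{\beta}^{3}$ contributions of (4.1) together with the cross term produced when the quadratic correction of $\widetilde{\beta}$ is fed into the $\widetilde{\beta}^{2}$ term — the coefficient of $\widetilde{\alpha}^{3}$ is nonzero precisely because $f^{'}(\alpha_{0})\neq0$, which is assumption (H5). Hence
$$\widetilde{x}=c\,\widetilde{\alpha}^{3}+a\,\widetilde{\alpha}\widetilde{t}+b\,\widetilde{t}^{2}+(\text{higher order}),\qquad c\neq0,$$
with $a,b,c$ depending only on the initial data at $(\alpha_{0},\beta_{0})$.

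Dividing by $c$ and setting $C_{1}=a/c$, $C_{2}=b/c$, $C_{3}=-1/c$, the relation becomes, up to higher-order terms, the depressed cubic
$$\widetilde{\alpha}^{3}+C_{1}\widetilde{t}\,\widetilde{\alpha}+C_{2}\widetilde{t}^{2}+C_{3}\widetilde{x}=0.$$
Applying Cardano's formula to $z^{3}+pz+q=0$ with $p=C_{1}\widetilde{t}$ and $q=C_{2}\widetilde{t}^{2}+C_{3}\widetilde{x}$ yields exactly the two cube-root expression (4.3), the relevant branch being the real root that tends to $0$ as $(\widetilde{t},\widetilde{x})\to(0,0)$. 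The main obstacle I anticipate is the bookkeeping in the coefficient count of the previous paragraph: one must track every contribution to the $\widetilde{\alpha}^{2}$ and $\widetilde{\alpha}^{3}$ coefficients through the inverted series and verify that they collapse to multiples of $f(\alpha_{0})$ and $f^{'}(\alpha_{0})$, respectively, so that (H4)--(H5) can be invoked. A secondary point to make precise is the discarding of the genuinely higher-order monomials (such as $\widetilde{\alpha}^{2}\widetilde{t}$ and $\widetilde{\alpha}\widetilde{t}^{2}$), which under the cusp scaling $\widetilde{\alpha}\sim|\widetilde{t}|^{1/2}$ are subdominant to the retained terms and therefore do not affect the leading formula (4.3).
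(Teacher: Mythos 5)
Your proposal is correct and follows essentially the same route as the paper: expand $\widetilde{t}$ and $\widetilde{x}$ to third order about $(\alpha_{0},\beta_{0})$, eliminate $\widetilde{\beta}$ by iterating/inverting the $\widetilde{t}$-relation (what the paper calls the ``iterative method''), observe that (H4) kills the $\widetilde{\alpha}^{2}$ term while (H5) keeps the $\widetilde{\alpha}^{3}$ coefficient nonzero, and solve the resulting depressed cubic $\widetilde{\alpha}^{3}+C_{1}\widetilde{t}\widetilde{\alpha}+C_{2}\widetilde{t}^{2}+C_{3}\widetilde{x}=0$ by Cardano's formula. Your explicit identification of the $\widetilde{\alpha}^{2}$-cancellation with the identity $\Lambda^{'}_{-}(\alpha_{0})\Lambda_{+}(\alpha_{0})+\Lambda^{'}_{+}(\beta_{0})\Lambda_{-}(\beta_{0})=0$ is exactly what produces the paper's coefficients $B_{1},B_{2},B_{3}$ in (4.9)--(4.11).
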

\begin{proof} Under the assumptions (H1)-(H5), it holds that
$$
\Lambda_{-}(\alpha_{0})=\Lambda_{+}(\beta_{0})=0.
\eqno(4.4)
$$
By (3.11)
\begin{align*}
\begin{split}
f^{'}(\alpha_{0})&=\frac{\Lambda^{''}_{-}(\alpha_{0})\Lambda^{2}_{+}(\alpha_{0})-\Lambda^{2}_{-}(\beta_{0})\Lambda^{''}_{+}(\beta_{0})}
{-\Lambda_{-}(\beta_{0})\Lambda^{2}_{+}(\alpha_{0})}\\
&\quad+\frac{
\Lambda^{'}_{+}(\beta_{0})\Lambda_{-}(\beta_{0})\left[\Lambda^{'}_{+}(\beta_{0})-\Lambda^{'}_{-}(\beta_{0})-\left(\Lambda^{'}_{+}(\alpha_{0})-\Lambda^{'}_{-}(\alpha_{0})\right)\right]}
{-\Lambda_{-}(\beta_{0})\Lambda^{2}_{+}(\alpha_{0})}\\
&<0.
\end{split}\tag{4.5}
\end{align*}
So
$$\begin{array}{l}
\Lambda^{''}_{-}(\alpha_{0})\Lambda^{2}_{+}(\alpha_{0})-\Lambda^{2}_{-}(\beta_{0})\Lambda^{''}_{+}(\beta_{0})\vspace{2mm}\\
\quad+
\Lambda^{'}_{+}(\beta_{0})\Lambda_{-}(\beta_{0})\left[\Lambda^{'}_{+}(\beta_{0})-\Lambda^{'}_{-}(\beta_{0})
-\left(\Lambda^{'}_{+}(\alpha_{0})-\Lambda^{'}_{-}(\alpha_{0})\right)\right]<0.
\end{array}
\eqno(4.6)
$$
By (4.6), it suffices to expand $\tilde{t}$ and $\tilde{x}$ up to third order of $\tilde{\alpha}$ and $\tilde{\beta}$ to get an optimal estimate in (4.1) and (4.2). Noting the assumption (3.10) and using (4.1) and (4.2) lead to
\begin{align*}
\begin{split}
\widetilde{t}&=-\frac{\widetilde{\beta}}{\Lambda_{-}(\beta_{0})}-\frac{1}{2}\frac{\Lambda^{'}_{+}(\beta_{0})-\Lambda^{'}_{-}(\beta_{0})}
{\Lambda^{2}_{-}(\beta_{0})}\widetilde{\beta}^{2}\nonumber\\
&\quad+\frac{1}{6}\left(\frac{\Lambda^{''}_{-}(\beta_{0})-\Lambda^{''}_{+}(\beta_{0})}{\Lambda^{2}_{-}(\beta_{0})}
-\frac{2\left(\Lambda^{'}_{+}(\beta_{0})-\Lambda^{'}_{-}(\beta_{0})\right)^{2}}{\Lambda^{3}_{-}(\beta_{0})}\right)\widetilde{\beta}^{3}\nonumber\\
&\quad-
\frac{\widetilde{\alpha}}{\Lambda_{+}(\alpha_{0})}
+\frac{1}{2}\frac{\Lambda^{'}_{+}(\alpha_{0})-\Lambda^{'}_{-}(\alpha_{0})}
{\Lambda^{2}_{+}(\alpha_{0})}\widetilde{\alpha}^{2}\nonumber\\
& \quad-\frac{1}{6}\left(\frac{\Lambda^{''}_{-}(\alpha_{0})-\Lambda^{''}_{+}(\alpha_{0})}{\Lambda^{2}_{+}(\alpha_{0})}
+\frac{2(\Lambda^{'}_{+}(\alpha_{0})-\Lambda^{'}_{-}(\alpha_{0}))^{2}}{\Lambda^{3}_{+}(\alpha_{0})}\right)\widetilde{\alpha}^{3}\nonumber
\end{split}\tag{4.7}
\end{align*}
and
\begin{align*}
\begin{split}
\widetilde{x}&=-\frac{\Lambda^{'}_{+}(\beta_{0})}{2\Lambda_{-}(\beta_{0})}\widetilde{\beta}^{2}
+\frac{1}{6}\left(\frac{\Lambda^{''}_{+}(\beta_{0})}{-\Lambda_{-}(\beta_{0})}-\frac{2\Lambda^{'}_{+}(\beta_{0})
\left(\Lambda^{'}_{+}(\beta_{0})-\Lambda^{'}_{-}(\beta_{0})\right)}{\Lambda^{2}_{-}(\beta_{0})}\right)\widetilde{\beta}^{3}\nonumber\\
&\quad-\frac{\Lambda^{'}_{-}(\alpha_{0})}{2\Lambda_{+}(\alpha_{0})}\widetilde{\alpha}^{2}
-\frac{1}{6}
\left(\frac{\Lambda^{''}_{-}(\alpha_{0})}{\Lambda_{+}(\alpha_{0})}-\frac{2\Lambda^{'}_{-}(\alpha_{0})
\left(\Lambda^{'}_{+}(\alpha_{0})-\Lambda^{'}_{-}(\alpha_{0})\right)}{\Lambda^{2}_{+}(\alpha_{0})}\right)\widetilde{\alpha}^{3}.\nonumber
\end{split}\tag{4.8}
\end{align*}
Noting (4.7), (4.8) and using the iterative method, we can obtain
$$
\widetilde{x}=B_{3}\widetilde{\alpha}^{3}+B_{2}\widetilde{t}\widetilde{\alpha}+B_{1}\widetilde{t}^{2},
\eqno(4.9)
$$
where
$$
B_{1}=-\frac{\Lambda^{'}_{+}(\beta_{0})\Lambda_{-}(\beta_{0})}{2}<0,\quad B_{2}=\Lambda^{'}_{-}(\alpha_{0})<0
\eqno(4.10)
$$
and
\begin{align*}
\begin{split}
B_{3}&=-\frac{\Lambda^{''}_{-}(\alpha_{0})\Lambda^{2}_{+}(\alpha_{0})-\Lambda^{2}_{-}(\beta_{0})\Lambda^{''}_{+}(\beta_{0})
}{6\Lambda^{3}_{+}(\alpha_{0})}\\
&\quad+\frac{\Lambda^{'}_{+}(\beta_{0})\Lambda_{-}(\beta_{0})[\Lambda^{'}_{+}(\beta_{0})-\Lambda^{'}_{-}(\beta_{0})
-(\Lambda^{'}_{+}(\alpha_{0})-\Lambda^{'}_{-}(\alpha_{0}))]}{6\Lambda^{3}_{+}(\alpha_{0})}\\
&>0.
\end{split} \tag{4.11}
\end{align*}
Solving equation (4.9) gives
\begin{align*}
\begin{split}
\widetilde{\alpha}&=\left(-\frac{1}{2}(C_{2}\widetilde{t}^{2}+C_{3}\widetilde{x})+\left(\frac{1}{4}(C_{2}\widetilde{t}^{2}+C_{3}\widetilde{x})^{2}
+\frac{C_{1}^{3}\widetilde{t}^{3}}{27}\right)^{\frac{1}{2}}\right)^{\frac{1}{3}}\nonumber\\
& \quad+\left(-\frac{1}{2}(C_{2}\widetilde{t}^{2}+C_{3}\widetilde{x})-\left(\frac{1}{4}(C_{2}\widetilde{t}^{2}+C_{3}\widetilde{x})^{2}
+\frac{C_{1}^{3}\widetilde{t}^{3}}{27}\right)^{\frac{1}{2}}\right)^{\frac{1}{3}},\nonumber
\end{split}
\end{align*}
where $$C_{1}=\frac{B_{2}}{B_{3}}<0,\quad C_{2}=\frac{B_{1}}{B_{3}}<0,\quad C_{3}=-\frac{1}{B_{3}}<0.$$
By (4.10) and (4.11), we observe that $C_{i}$ ($i=1,2,3$) depend only on the value of initial data at $(\alpha_{0},\beta_{0})$.
Thus, the lemma is proved.
\end{proof}
\begin{lemma}
Under the assumptions (H1)-(H5), it holds that
\begin{displaymath}
\widetilde{\alpha} = \left\{\begin{array}{ll}
-\frac{C_{2}}{C_{1}}\widetilde{t}-\frac{C_{3}\widetilde{x}}{C_{1}\widetilde{t}},&\textrm{$\widetilde{x}=o(|\widetilde{t}|^{\frac{3}{2}})$},\vspace{2mm}\\
(-C_{3}\widetilde{x})^{\frac{1}{3}},&\textrm{$|\widetilde{t}|^{\frac{3}{2}}=o(\widetilde{x})$},\vspace{2mm}\\
C(-\frac{1}{2}(C_{3}\widetilde{x}))^{\frac{1}{3}},&\textrm{$\widetilde{x}=O(1)|\widetilde{t}|^{\frac{3}{2}}$}
\end{array}\right.
\eqno(4.12)
\end{displaymath}
for $\widetilde{t}$ and $\widetilde{x}$ sufficiently small, $C$ stands for a constant and $C_{i}\; (i=1,2,3)$ are determined by Lemma 4.1.
\end{lemma}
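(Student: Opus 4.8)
The plan is to avoid manipulating the unwieldy Cardano expression (4.3) directly and instead to extract the leading-order behaviour of $\widetilde{\alpha}$ by the method of dominant balance applied to the reduced cubic (4.9), rewritten in depressed form as
\[
\widetilde{\alpha}^{3}+C_{1}\widetilde{t}\,\widetilde{\alpha}+C_{2}\widetilde{t}^{2}+C_{3}\widetilde{x}=0,
\]
with $C_{1},C_{2},C_{3}<0$ supplied by Lemma 4.1. The three regimes in the statement are exactly the three ways the non-cubic monomials $C_{1}\widetilde{t}\,\widetilde{\alpha}$, $C_{2}\widetilde{t}^{2}$ and $C_{3}\widetilde{x}$ can compare with the cubic term $\widetilde{\alpha}^{3}$; equivalently, they record how $\tfrac14(C_{2}\widetilde{t}^{2}+C_{3}\widetilde{x})^{2}$ compares with $\tfrac{1}{27}C_{1}^{3}\widetilde{t}^{3}$ under the radical in (4.3). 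In each case I would posit the leading scaling of $\widetilde{\alpha}$, retain only the balancing terms, solve the resulting two-term equation, and then verify a posteriori that the discarded terms are of strictly lower order, which both justifies the balance and selects the correct real branch continuous with Lemma 4.1.

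For the first regime $\widetilde{x}=o(|\widetilde{t}|^{3/2})$ the cubic term is negligible and one balances the linear and constant terms, $C_{1}\widetilde{t}\,\widetilde{\alpha}+C_{2}\widetilde{t}^{2}+C_{3}\widetilde{x}\approx 0$, giving $\widetilde{\alpha}\approx-\tfrac{C_{2}}{C_{1}}\widetilde{t}-\tfrac{C_{3}\widetilde{x}}{C_{1}\widetilde{t}}$. The consistency check is immediate: here $\widetilde{\alpha}=O(|\widetilde{t}|)+o(|\widetilde{t}|^{1/2})=o(|\widetilde{t}|^{1/2})$, so $\widetilde{\alpha}^{3}=o(|\widetilde{t}|\,\widetilde{\alpha})$ is dominated by the retained linear term. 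For the second regime $|\widetilde{t}|^{3/2}=o(\widetilde{x})$ the linear term and $C_{2}\widetilde{t}^{2}$ drop out and the cubic balances $C_{3}\widetilde{x}$, yielding $\widetilde{\alpha}\approx(-C_{3}\widetilde{x})^{1/3}$; the check is that the linear term is $O(\widetilde{t}\,\widetilde{x}^{1/3})$ against a cubic of size $\widetilde{x}$, with ratio $\widetilde{t}\,\widetilde{x}^{-2/3}\to0$ precisely because $\widetilde{x}\gg|\widetilde{t}|^{3/2}$.

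The delicate case is the transitional regime $\widetilde{x}=O(1)|\widetilde{t}|^{3/2}$, where no single balance isolates a unique leading term: the cubic $\widetilde{\alpha}^{3}$, the linear $C_{1}\widetilde{t}\,\widetilde{\alpha}$ and the term $C_{3}\widetilde{x}$ are all of the common order $|\widetilde{t}|^{3/2}$, only $C_{2}\widetilde{t}^{2}$ being negligible. Here I would introduce the similarity variable $w$ through $\widetilde{\alpha}=|\widetilde{t}|^{1/2}w$ and write $\widetilde{x}=r\,|\widetilde{t}|^{3/2}$ with $r=O(1)$; dividing the cubic by $|\widetilde{t}|^{3/2}$ and letting $|\widetilde{t}|\to0$ produces the limiting cubic $w^{3}-C_{1}w+C_{3}r=0$, which, since $-C_{1}>0$, is strictly monotone and so has a bounded real root $w=w(r)$. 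Re-expressing through $\bigl(-\tfrac12 C_{3}\widetilde{x}\bigr)^{1/3}=\bigl(-\tfrac12 C_{3}r\bigr)^{1/3}|\widetilde{t}|^{1/2}$ then gives $\widetilde{\alpha}=C\bigl(-\tfrac12 C_{3}\widetilde{x}\bigr)^{1/3}$ with $C=w(r)/\bigl(-\tfrac12 C_{3}r\bigr)^{1/3}$ a bounded constant depending on the order-one ratio $\widetilde{x}/|\widetilde{t}|^{3/2}$. The main obstacles I anticipate are concentrated in this transitional regime together with the sign bookkeeping throughout: one must work in the existence domain $\widetilde{t}<0$ so that $C_{1}^{3}\widetilde{t}^{3}>0$ and the radical in (4.3) stays real, keep track of the signs of the $C_{i}$ and of $\widetilde{x}$ when choosing the real cube-root branch, and confirm that the root $w(r)$ selected in the limit is the one that continues smoothly to the roots obtained in the two non-transitional regimes.
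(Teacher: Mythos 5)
Your proposal is correct and arrives at the same three formulas, but by a genuinely different route from the paper. The paper's own proof of Lemma 4.2 never leaves the explicit Cardano expression (4.3): it writes $\widetilde{\alpha}=(A+(A^{2}+B)^{1/2})^{1/3}+(A-(A^{2}+B)^{1/2})^{1/3}$ and Taylor-expands the nested radicals in the small ratio $A^{2}/B$ (Case I) or $B/A^{2}$ (Case II), and in the transitional case treats $B/A^{2}$ as an order-one quantity $\overline{O(1)}$, which yields the constant $C$ of (4.14) as an explicit but opaque combination of cube roots. You instead go back to the depressed cubic $\widetilde{\alpha}^{3}+C_{1}\widetilde{t}\,\widetilde{\alpha}+C_{2}\widetilde{t}^{2}+C_{3}\widetilde{x}=0$ behind (4.9) and run a dominant-balance argument; this makes the origin of the three regimes transparent (they are exactly the admissible two-term balances), and your rescaling $\widetilde{\alpha}=|\widetilde{t}|^{1/2}w$, $\widetilde{x}=r|\widetilde{t}|^{3/2}$ in the transitional case characterizes $C$ through the unique real root of the limiting cubic $w^{3}-C_{1}w+C_{3}r=0$, which is arguably cleaner than (4.14). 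What the paper's route buys is that it starts from the exact root, so no separate argument is needed that the asymptotic candidate approximates the true solution; your route owes that step, but your monotonicity observation (for $\widetilde{t}<0$ one has $C_{1}\widetilde{t}>0$, so the cubic is strictly increasing in $\widetilde{\alpha}$ and has a unique real root depending continuously on the coefficients) supplies exactly the perturbation argument required, so the gap is only expository. Two minor points to carry along: as the paper itself concedes after its Case I, the first formula degenerates when $-\tfrac{C_{2}}{C_{1}}\widetilde{t}-\tfrac{C_{3}\widetilde{x}}{C_{1}\widetilde{t}}$ happens to vanish, in which case one only concludes $\widetilde{\alpha}=o(\widetilde{t})$; and your restriction to $\widetilde{t}<0$ covers the physically relevant half of the statement, whereas the paper's Case III formally tracks both signs through the factor $sign(\widetilde{t})$ in (4.14).
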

\begin{proof} By Lemma 4.1, for simplicity, we may rewrite $(4.3)$ as
$$
\widetilde{\alpha}=\left(A+(A^{2}+B)^{\frac{1}{2}}\right)^{\frac{1}{3}}+\left(A-(A^{2}+B)^{\frac{1}{2}}\right)^{\frac{1}{3}},
\eqno(4.13)
$$
where $$A=-\frac{1}{2}(C_{2}\widetilde{t}^{2}+C_{3}\widetilde{x}) \quad\text{and} \quad B=\frac{C_{1}^{3}\widetilde{t}^{3}}{27}.$$

We next prove Lemma 4.2 by distinguishing three cases:

\textbf{Case I}:\: $A^{2}=o(B)$, i.e.,\: $\widetilde{x}=o(|\widetilde{t}|^{\frac{3}{2}})$.

By Taylor expansion, we have
\begin{align*}
\begin{split}
\widetilde{\alpha}&=\left[A+B^{\frac{1}{2}}\left(1+\frac{A^{2}}{B}\right)^{\frac{1}{2}}\right]^{\frac{1}{3}}+\left[A-B^{\frac{1}{2}}\left(1+\frac{A^{2}}{B}\right)^{\frac{1}{2}}\right]^{\frac{1}{3}}\nonumber\\
&=\left[A+B^{\frac{1}{2}}\left(1+\frac{A^{2}}{2B}+o(\frac{A^{2}}{B})\right)\right]^{\frac{1}{3}}+\left[A-B^{\frac{1}{2}}\left(1+\frac{A^{2}}{2B}+o(\frac{A^{2}}{B})\right)\right]^{\frac{1}{3}}\nonumber\\
&=B^{\frac{1}{6}}\left\{\left[\frac{A}{B^{\frac{1}{2}}}+(1+\frac{A^{2}}{2B}+o(\frac{A^{2}}{B}))\right]^{\frac{1}{3}}-\left[-\frac{A}{B^{\frac{1}{2}}}+(1+\frac{A^{2}}{2B}+o(\frac{A^{2}}{B}))\right]^{\frac{1}{3}}\right\}\nonumber\\
&=B^{\frac{1}{6}}\left[1+\frac{A^{2}}{6B}+\frac{A}{3B^{\frac{1}{2}}}-(1+\frac{A^{2}}{6B}-\frac{A}{3B^{\frac{1}{2}}})+o(\frac{A^{2}}{B})\right]\nonumber\\
&=B^{\frac{1}{6}}(\frac{2A}{3B^{\frac{1}{2}}}+o(\frac{A^{2}}{B}))=\frac{2AB^{-\frac{1}{3}}}{3}+o(A^{2}B^{-\frac{5}{6}})\nonumber\\
&=-\frac{C_{2}}{C_{1}}\widetilde{t}-\frac{C_{3}\widetilde{x}}{C_{1}\widetilde{t}}+o(\tilde{t}^{\frac{3}{2}}+\tilde{x}^{2}\tilde{t}^{-\frac{5}{2}}+\tilde{x}\tilde{t}^{-\frac{1}{2}}).
\end{split}
\end{align*}
The special case $-\frac{C_{2}}{C_{1}}\widetilde{t}-\frac{C_{3}\widetilde{x}}{C_{1}\widetilde{t}}=0$ implies that $\widetilde{\alpha}=o(\tilde{t})$, which does not affect the main results of the paper, so we do not distinguish this special case anymore.

\textbf{Case II}:\: $B=o(A^{2})$, i.e.,\: $|\widetilde{t}|^{\frac{3}{2}}=o(\widetilde{x})$.

By Taylor expansion, we have
\begin{align*}
\begin{split}
\widetilde{\alpha}&=\left[A+|A|\left(1+\frac{B}{A^{2}}\right)^{\frac{1}{2}}\right]^{\frac{1}{3}}+\left[A-|A|\left(1+\frac{B}{A^{2}}\right)^{\frac{1}{2}}\right]^{\frac{1}{3}}\nonumber\\
&=\left[A+|A|\left(1+\frac{B}{2A^{2}}+o(\frac{B}{A^{2}})\right)\right]^{\frac{1}{3}}+\left[A-|A|\left(1+\frac{B}{2A^{2}}+o(\frac{B}{A^{2}})\right)\right]^{\frac{1}{3}}\nonumber\\
&=|A|^{\frac{1}{3}}\left\{\left[sign(A)+\left(1+\frac{B}{2A^{2}}+o(\frac{B}{A^{2}})\right)\right]^{\frac{1}{3}}+\left[sign(A)-\left(1+\frac{B}{2A^{2}}+o(\frac{B}{A^{2}})\right)\right]^{\frac{1}{3}}\right\}\nonumber\\
&=\left((2+o(1))A\right)^{\frac{1}{3}}=(-C_{2}\widetilde{t}^{2}-C_{3}\widetilde{x})^{\frac{1}{3}}
=(-C_{3}\widetilde{x})^{\frac{1}{3}}(1+\frac{C_{2}\tilde{t}^{2}}{C_{3}\tilde{x}})^{\frac{1}{3}}\nonumber\\
&=(-C_{3}\widetilde{x})^{\frac{1}{3}}
(1+\frac{C_{2}\tilde{t}^{2}}{3C_{3}\tilde{x}}+o(\frac{\tilde{t}^{2}}{\tilde{x}}))
=(-C_{3}\widetilde{x})^{\frac{1}{3}}+o(\tilde{x}^{\frac{1}{3}}).\nonumber
\end{split}
\end{align*}

\textbf{Case III}: \:$B=\overline{O(1)}(A^{2})$, i.e., \:$\widetilde{x}=O(1)|\widetilde{t}|^{\frac{3}{2}}$.

By Taylor expansion, we have
\begin{align*}
\begin{split}
\widetilde{\alpha}&=\left[A+\left(A^{2}+\overline{O(1)}A^{2}\right)^{\frac{1}{2}}\right]^{\frac{1}{3}}+\left[A-\left(A^{2}+\overline{O(1)}A^{2}\right)^{\frac{1}{2}}\right]^{\frac{1}{3}}\nonumber\\
&=\left[A+|A|\left(1+\overline{O(1)}\right)^{\frac{1}{2}}\right]^{\frac{1}{3}}+\left[A-|A|\left(1+\overline{O(1)}\right)^{\frac{1}{2}}\right]^{\frac{1}{3}}\nonumber\\
&=\left\{\left[1+\left(1+\overline{O(1)}\right)^{\frac{1}{2}}\right]^{\frac{1}{3}}+\left[1-\left(1+\overline{O(1)}\right)^{\frac{1}{2}}\right]^{\frac{1}{3}}\right\}A^{\frac{1}{3}}\nonumber\\
&=\left\{\left[1+\left(1+\overline{O(1)}\right)^{\frac{1}{2}}\right]^{\frac{1}{3}}+\left[1-\left(1+\overline{O(1)}\right)^{\frac{1}{2}}\right]^{\frac{1}{3}}\right\}\left[-\frac{1}{2}(C_{2}\widetilde{t}^{2}+C_{3}\widetilde{x})\right]^{\frac{1}{3}}\nonumber\\
&\triangleq C\left(-\frac{1}{2}(C_{2}\widetilde{t}^{2}+C_{3}\widetilde{x})\right)^{\frac{1}{3}}=C\left(-\frac{1}{2}C_{3}\widetilde{x}\right)^{\frac{1}{3}}+o(\tilde{x}^{\frac{1}{3}}),\nonumber
\end{split}
\end{align*}
where $$C=\left(1+\left(1+\overline{O(1)}\right)^{\frac{1}{2}}\right)^{\frac{1}{3}}+\left(1-\left(1+\overline{O(1)}\right)^{\frac{1}{2}}\right)^{\frac{1}{3}},\quad \overline{O(1)}=\frac{4C_{1}^{3}sign(\widetilde{t})}{27C_{3}^{2}(O(1))^{2}}.\eqno(4.14)$$
Since the constants derived in the proof are not equal to zero, we discard the higher order terms.
\end{proof}
\begin{lemma}
Under the assumptions (H1)-(H5) and (4.7), it holds that
\begin{displaymath}
\Lambda_{+}(\beta)-\Lambda_{-}(\alpha) = \left\{\begin{array}{ll}
D_{1}\widetilde{t},&\textrm{$\widetilde{x}=o(|\widetilde{t}|^{\frac{3}{2}})$},\vspace{2mm}\\
D_{2}\widetilde{x}^{\frac{2}{3}},&\textrm{$|\widetilde{t}|^{\frac{3}{2}}=o(\widetilde{x})$},\vspace{2mm}\\
D_{3}\widetilde{t},&\textrm{$\widetilde{x}=O(1)|\widetilde{t}|^{\frac{3}{2}}$},
\end{array}\right.
\eqno(4.15)
\end{displaymath}
where $D_{i}\,(i=1,2,3)$ depend only on the initial data at $(\alpha_{0},\beta_{0})$.
\end{lemma}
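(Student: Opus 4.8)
The plan is to extract the leading behaviour of $\Lambda_+(\beta)-\Lambda_-(\alpha)$ from the expansions (4.7)--(4.9) together with the three-regime asymptotics of $\widetilde\alpha$ already proved in Lemma 4.2. The cleanest route I would take begins by recording the exact identity
$$
\Lambda_+(\beta)-\Lambda_-(\alpha)=\bigl(\Lambda_+(\alpha)-\Lambda_-(\alpha)\bigr)\frac{\partial x}{\partial\alpha}\Big|_{t},
$$
which follows by differentiating the explicit formulae (2.10)--(2.11): one computes $t_\alpha,t_\beta,x_\alpha,x_\beta$, uses $t_\beta=1/(\Lambda_+(\beta)-\Lambda_-(\beta))\neq 0$ (valid by (H1)) to take $(t,\alpha)$ as coordinates, and evaluates $\partial x/\partial\alpha|_t=x_\alpha-x_\beta t_\alpha/t_\beta$. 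Since $\Lambda_+(\alpha)-\Lambda_-(\alpha)\to\Lambda_+(\alpha_0)>0$ by (H1) and (4.4), the whole problem reduces to the asymptotics of $\partial x/\partial\alpha|_t$, which is governed purely by the cubic (4.9).

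Next I would differentiate (4.9), written as $\widetilde x=B_3\widetilde\alpha^3+B_2\widetilde t\,\widetilde\alpha+B_1\widetilde t^2$, to get $\partial x/\partial\alpha|_t=3B_3\widetilde\alpha^2+B_2\widetilde t+(\text{h.o.})$, so that
$$
\Lambda_+(\beta)-\Lambda_-(\alpha)=\Lambda_+(\alpha_0)\bigl(3B_3\widetilde\alpha^2+B_2\widetilde t\bigr)+(\text{h.o.}).
$$
It then remains to insert the three formulae for $\widetilde\alpha$ from (4.12) and decide, in each regime, which term dominates. In Case I ($\widetilde x=o(|\widetilde t|^{3/2})$) we have $\widetilde\alpha\sim\widetilde t$, so $3B_3\widetilde\alpha^2=O(\widetilde t^{2})=o(\widetilde t)$ and $\Lambda_+(\alpha_0)B_2\widetilde t$ dominates, giving $D_1\widetilde t$. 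In Case II ($|\widetilde t|^{3/2}=o(\widetilde x)$) we have $\widetilde\alpha=(-C_3\widetilde x)^{1/3}$, so $3B_3\widetilde\alpha^2\sim\widetilde x^{2/3}$ while $B_2\widetilde t=o(\widetilde x^{2/3})$, and the quadratic term dominates, giving $D_2\widetilde x^{2/3}$. In the borderline Case III ($\widetilde x=O(1)|\widetilde t|^{3/2}$) both $\widetilde\alpha^2$ and $\widetilde t$ are of order $\widetilde t$, so the two contributions combine into a single multiple of $\widetilde t$, giving $D_3\widetilde t$.

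An equivalent but more pedestrian derivation, closer to the hint ``(4.7)'' in the statement, avoids the identity: Taylor expand $\Lambda_\pm$ about $(\alpha_0,\beta_0)$ using (4.4) to get $\Lambda_+(\beta)-\Lambda_-(\alpha)=\Lambda_+'(\beta_0)\widetilde\beta-\Lambda_-'(\alpha_0)\widetilde\alpha+\tfrac12\Lambda_+''(\beta_0)\widetilde\beta^{2}-\tfrac12\Lambda_-''(\alpha_0)\widetilde\alpha^{2}+\cdots$, then eliminate $\widetilde\beta$ via (4.7), whose linear part gives $\widetilde\beta=-\Lambda_-(\beta_0)\bigl(\widetilde t+\widetilde\alpha/\Lambda_+(\alpha_0)\bigr)+(\text{quadratic})$. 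The decisive point is that assumption (H4), written as $\Lambda_-'(\alpha_0)\Lambda_+(\alpha_0)=-\Lambda_+'(\beta_0)\Lambda_-(\beta_0)$ under (4.4), forces the surviving linear-in-$\widetilde\alpha$ coefficient to vanish; what is left is precisely $-\Lambda_+'(\beta_0)\Lambda_-(\beta_0)\widetilde t$ together with a genuine $\widetilde\alpha^{2}$ term, reproducing the identity above (its $\widetilde\alpha^2$ coefficient equals $3\Lambda_+(\alpha_0)B_3$). I expect the main obstacle to be exactly this cancellation bookkeeping: to capture the $\widetilde\alpha^{2}$ coefficient one must carry $\widetilde\beta(\widetilde\alpha,\widetilde t)$ to quadratic order, which is why I prefer the identity route, where the coefficient is handed to us by (4.9). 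Finally I would verify non-vanishing of the constants: $D_1=\Lambda_+(\alpha_0)B_2<0$ and $D_2=3\Lambda_+(\alpha_0)B_3(-C_3)^{2/3}>0$ are immediate from $B_2<0$ and $B_3>0$, the latter being exactly where (H5) enters through (4.11); $D_3$ requires a little more care, since it blends both terms through the factor $\overline{O(1)}$ of (4.14), but it is manifestly nonzero in the physical regime $\widetilde t<0$, where $3B_3\widetilde\alpha^2>0$ and $B_2\widetilde t>0$ share the same sign.
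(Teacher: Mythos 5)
Your proposal is correct, and your ``pedestrian'' alternative is in fact exactly the paper's own proof: the authors iterate (4.7) to second order to write $\widetilde\beta=-\Lambda_{-}(\beta_{0})\bigl[\widetilde t+\widetilde\alpha/\Lambda_{+}(\alpha_{0})+O(\widetilde\alpha^{2})\bigr]$, Taylor-expand $\Lambda_{+}(\beta)-\Lambda_{-}(\alpha)$ to second order, use (H4) under the normalization (3.10) (i.e. $\Lambda_{-}'(\alpha_{0})\Lambda_{+}(\alpha_{0})=-\Lambda_{+}'(\beta_{0})\Lambda_{-}(\beta_{0})$) to cancel the term linear in $\widetilde\alpha$, arrive at $-\Lambda_{+}'(\beta_{0})\Lambda_{-}(\beta_{0})\widetilde t+M\widetilde\alpha^{2}$ with $M\neq0$ by (3.11), and then substitute the three regimes of Lemma 4.2 exactly as you describe. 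Your preferred route is genuinely different and arguably cleaner: the identity $\Lambda_{+}(\beta)-\Lambda_{-}(\alpha)=(\Lambda_{+}(\alpha)-\Lambda_{-}(\alpha))\,\partial x/\partial\alpha|_{t}$ need not be re-derived from (2.10)--(2.11), since it is literally formula (3.1) of Lemma 3.1; and differentiating the cusp normal form (4.9) in $\alpha$ at fixed $t$ packages both cancellations (those coming from (H3) and (H4)) into the absence of the constant and $\widetilde\alpha$-linear terms, handing you the quadratic coefficient $3\Lambda_{+}(\alpha_{0})B_{3}$ without redoing the second-order bookkeeping; it also yields the consistency check $M=3\Lambda_{+}(\alpha_{0})B_{3}$ against the paper's separately computed $M$. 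Two small caveats: (i) since (4.9) is an asymptotic relation, differentiating it in $\alpha$ requires the remainder to be small in $C^{1}$, which holds here because (4.1)--(4.2) are honest Taylor expansions of smooth functions (alternatively, differentiate the exact formulas and expand afterwards); (ii) your sign argument for $D_{3}\neq0$ covers only $\widetilde t<0$, while Case III also admits $\widetilde t>0$, where $3B_{3}\widetilde\alpha^{2}>0$ and $B_{2}\widetilde t<0$ could in principle compete --- though the paper itself merely asserts non-vanishing of these constants (Remark 4.2), so you match its level of rigor. Finally, in Case I the assertion $\widetilde\alpha\sim\widetilde t$ is slightly imprecise (the term $C_{3}\widetilde x/(C_{1}\widetilde t)$ is only $o(|\widetilde t|^{1/2})$), but $\widetilde\alpha^{2}=o(|\widetilde t|)$ still holds, which is all you actually use.
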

\begin{proof} Iterating (4.7) two times and retain $\tilde{\alpha}$ to second order term, we obtain
$$
\tilde{\beta}=-\Lambda_{-}(\beta_{0})\left[\tilde{t}+\frac{\tilde{\alpha}}{\Lambda_{+}(\alpha_{0})}+
\frac{\Lambda_{+}^{'}(\beta_{0})-\Lambda_{-}^{'}(\beta_{0})-(\Lambda_{+}^{'}(\alpha_{0})-\Lambda_{-}^{'}(\alpha_{0}))}
{2\Lambda_{+}^{2}(\alpha_{0})}\tilde{\alpha}^{2}\right].
$$
Then by assumptions (H1)-(H5) and above discussions, we have
\begin{align*}
\begin{split}
&\Lambda_{+}(\beta)-\Lambda_{-}(\alpha)\nonumber\\
&=\Lambda_{+}(\beta)-\Lambda_{+}(\beta_{0})+\Lambda_{-}(\alpha_{0})-\Lambda_{-}(\alpha)\nonumber\\
&=\Lambda^{'}_{+}(\beta_{0})\widetilde{\beta}-\Lambda^{'}_{-}(\alpha_{0})\widetilde{\alpha}+\frac{1}{2}\Lambda^{''}_{+}(\beta_{0})
\widetilde{\beta}^{2}-\frac{1}{2}\Lambda^{''}_{-}(\alpha_{0})\widetilde{\alpha}^{2}\nonumber\\
&=-\Lambda^{'}_{+}(\beta_{0})\Lambda_{-}(\beta_{0})\widetilde{t}+M\widetilde{\alpha}^{2},\nonumber
\end{split}\tag{4.16}
\end{align*}
where
$$
M=\frac{\Lambda_{+}^{''}(\beta_{0})\Lambda_{-}^{2}(\beta_{0})-\Lambda_{-}^{''}(\alpha_{0})\Lambda_{+}^{2}(\alpha_{0})
-\Lambda_{+}^{'}(\beta_{0})\Lambda_{-}(\beta_{0})\left[
\Lambda_{+}^{'}(\beta_{0})-\Lambda_{-}^{'}(\beta_{0})-(\Lambda_{+}^{'}(\alpha_{0})-\Lambda_{-}^{'}(\alpha_{0}))\right]}{2\Lambda_{+}^{2}(\alpha_{0})}.
$$
By (3.11), it holds that $M\neq0$, thus, by Lemma 4.2, we have

\textbf{Case I}: $\widetilde{x}=o(|\widetilde{t}|^{\frac{3}{2}})$.
\begin{align*}
\begin{split}
\Lambda_{+}(\beta)-\Lambda_{-}(\alpha)&=-\Lambda^{'}_{+}(\beta_{0})\Lambda_{-}(\beta_{0})\widetilde{t}+M\left(-\frac{C_{2}}{C_{1}}\widetilde{t}-\frac{C_{3}\widetilde{x}}{C_{1}\widetilde{t}}\right)^{2}\nonumber\\
&=-\Lambda^{'}_{+}(\beta_{0})\Lambda_{-}(\beta_{0})\widetilde{t}+M\frac{C^{2}_{3}\widetilde{x}^{2}}{C^{2}_{1}\widetilde{t}^{2}}\nonumber\\
&=-\Lambda^{'}_{+}(\beta_{0})\Lambda_{-}(\beta_{0})\widetilde{t}\triangleq D_{1}\widetilde{t},\nonumber
\end{split}\tag{4.17}
\end{align*}
where $$D_{1}=-\Lambda^{'}_{+}(\beta_{0})\Lambda_{-}(\beta_{0}).$$

\textbf{Case II}: $|\widetilde{t}|^{\frac{3}{2}}=o(\widetilde{x})$.
\begin{align*}
\begin{split}
\Lambda_{+}(\beta)-\Lambda_{-}(\alpha)&=-\Lambda^{'}_{+}(\beta_{0})\Lambda_{-}(\beta_{0})\widetilde{t}+M(-C_{3}\widetilde{x})^{\frac{2}{3}}\nonumber\\
&=M(-C_{3}\widetilde{x})^{\frac{2}{3}}\triangleq D_{2}\widetilde{x}^{\frac{2}{3}},\nonumber
\end{split}\tag{4.18}\end{align*}
where $$D_{2}=MC_{3}^{\frac{2}{3}}.$$

\textbf{Case III}: $\widetilde{x}=O(1)|\widetilde{t}|^{\frac{3}{2}}$.
\begin{align*}
\begin{split}
\Lambda_{+}(\beta)-\Lambda_{-}(\alpha)&=-\Lambda^{'}_{+}(\beta_{0})\Lambda_{-}(\beta_{0})\widetilde{t}+MC^{2}\left(-\frac{1}{2}(C_{3}\widetilde{x})\right)^{\frac{2}{3}}\nonumber\\
&\triangleq D_{3}\widetilde{t},\nonumber
\end{split}\tag{4.19}\end{align*}
where $$D_{3}=-\Lambda^{'}_{+}(\beta_{0})\Lambda_{-}(\beta_{0})+MC^{2}\left(\frac{C_{3}O(1)}{2}\right)^{\frac{2}{3}}sign(\widetilde{t}).$$
Thus, the lemma is proved.
\end{proof}

\textbf{Proof of Theorem 4.1}.\\
 By (2.4), we have
$$
u(t,x)=\frac{\lambda_{+}(t,x)+\lambda_{-}(t,x)}{2}=\frac{\Lambda_{+}(\beta)+\Lambda_{-}(\alpha)}{2}
\eqno(4.20)
$$
and
$$
\rho(t,x)=\frac{2\mu}{\lambda_{+}(t,x)-\lambda_{-}(t,x)}=\frac{2\mu}{\Lambda_{+}(\beta)-\Lambda_{-}(\alpha)}.
\eqno(4.21)
$$
Here and hereafter, we use $(\alpha,\beta)$ (resp. $(\alpha_{0},\beta_{0})$) to denote the characteristic parameters defined by (2.10) and (2.11) corresponding to $(t,x)$ (resp. $(t_{0},x_{0})$).

In order to estimate $u(t,x)$ at the blowup point $(t_{0},x_{0})$, we firstly estimate
$$\Lambda_{+}(\beta)-\Lambda_{+}(\beta_{0})+\Lambda_{-}(\alpha)-\Lambda_{-}(\alpha_{0}).
\eqno(4.22)
$$
By Taylor expansion and (4.7),
\begin{align*}
\begin{split}
&\quad\Lambda_{+}(\beta)-\Lambda_{+}(\beta_{0})+\Lambda_{-}(\alpha)-\Lambda_{-}(\alpha_{0})\nonumber\\
&=\Lambda^{'}_{+}(\beta_{0})\widetilde{\beta}+\Lambda^{'}_{-}(\alpha_{0})\widetilde{\alpha}\nonumber\\
&=\Lambda^{'}_{+}(\beta_{0})\left(-\Lambda_{-}(\beta_{0})\widetilde{t}-\frac{\Lambda_{-}(\beta_{0})}{\Lambda_{+}(\alpha_{0})}\widetilde{\alpha}\right)+\Lambda^{'}_{-}(\alpha_{0})\widetilde{\alpha}\nonumber\\
&=-\Lambda^{'}_{+}(\beta_{0})\Lambda_{-}(\beta_{0})\widetilde{t}+\frac{-\Lambda^{'}_{+}(\beta_{0})\Lambda_{-}(\beta_{0})+\Lambda^{'}_{-}(\alpha_{0})\Lambda_{+}(\alpha_{0})}{\Lambda_{+}(\alpha_{0})}\widetilde{\alpha}\nonumber\\
&=-\Lambda^{'}_{+}(\beta_{0})\Lambda_{-}(\beta_{0})\widetilde{t}-\frac{2\Lambda^{'}_{+}(\beta_{0})\Lambda_{-}(\beta_{0})}{\Lambda_{+}(\alpha_{0})}\widetilde{\alpha}.\nonumber
\end{split}\tag{4.23}\end{align*}
By Lemma 4.2, we have

\textbf{Case I}: $\widetilde{x}=o(|\widetilde{t}|^{\frac{3}{2}}).$

We have
\begin{align*}
\begin{split}
&\quad\Lambda_{+}(\beta)+\Lambda_{-}(\alpha)\\
&=-\Lambda^{'}_{+}(\beta_{0})\Lambda_{-}(\beta_{0})\widetilde{t}-\frac{2\Lambda^{'}_{+}(\beta_{0})\Lambda_{-}(\beta_{0})}{\Lambda_{+}(\alpha_{0})}\left(-\frac{C_{2}}{C_{1}}\widetilde{t}-\frac{C_{3}\widetilde{x}}{C_{1}\widetilde{t}}\right)\nonumber\\
&=\left(-\Lambda^{'}_{+}(\beta_{0})\Lambda_{-}(\beta_{0})+\frac{2\Lambda^{'}_{+}(\beta_{0})\Lambda_{-}(\beta_{0})C_{2}}{\Lambda_{+}(\alpha_{0})C_{1}}\right)\widetilde{t}
+\frac{2\Lambda^{'}_{+}(\beta_{0})\Lambda_{-}(\beta_{0})C_{3}}{\Lambda_{+}(\alpha_{0})C_{1}}\frac{\widetilde{x}}{\widetilde{t}}\nonumber\\
&\triangleq C_{5}\widetilde{t}+C_{6}\frac{\widetilde{x}}{\widetilde{t}}=C_{6}\frac{\widetilde{x}}{\widetilde{t}}+o(\tilde{t}),\nonumber
\end{split}
\tag{4.24}
\end{align*}
where $$C_{5}=-\Lambda^{'}_{+}(\beta_{0})\Lambda_{-}(\beta_{0})+\frac{2\Lambda^{'}_{+}(\beta_{0})\Lambda_{-}(\beta_{0})C_{2}}{\Lambda_{+}(\alpha_{0})C_{1}}=0,\quad C_{6}=\frac{2\Lambda^{'}_{+}(\beta_{0})\Lambda_{-}(\beta_{0})C_{3}}{\Lambda_{+}(\alpha_{0})C_{1}}.$$

\textbf{Case II}: $|\widetilde{t}|^{\frac{3}{2}}=o(\widetilde{x}).$

 In this case, we get
\begin{align*}
\begin{split}
\Lambda_{+}(\beta)+\Lambda_{-}(\alpha)&=-\Lambda^{'}_{+}(\beta_{0})\Lambda_{-}(\beta_{0})\widetilde{t}-\frac{2\Lambda^{'}_{+}(\beta_{0})\Lambda_{-}(\beta_{0})}{\Lambda_{+}(\alpha_{0})}(-C_{3}\widetilde{x})^{\frac{1}{3}}\nonumber\\
&=-\frac{2\Lambda^{'}_{+}(\beta_{0})\Lambda_{-}(\beta_{0})}{\Lambda_{+}(\alpha_{0})}(-C_{3}\widetilde{x})^{\frac{1}{3}}\triangleq C_{7}\widetilde{x}^{\frac{1}{3}},\nonumber
\end{split}\tag{4.25}\end{align*}
where $$C_{7}=\frac{2\Lambda^{'}_{+}(\beta_{0})\Lambda_{-}(\beta_{0})}{\Lambda_{+}(\alpha_{0})}C_{3}^{\frac{1}{3}}.$$

\textbf{Case III}: $\widetilde{x}=O(1)|\widetilde{t}|^{\frac{3}{2}}.$

We obtain
\begin{align*}
\begin{split}
\Lambda_{+}(\beta)+\Lambda_{-}(\alpha)&=-\Lambda^{'}_{+}(\beta_{0})\Lambda_{-}(\beta_{0})\widetilde{t}-\frac{2\Lambda^{'}_{+}(\beta_{0})\Lambda_{-}(\beta_{0})}{\Lambda_{+}(\alpha_{0})}C\left(-\frac{1}{2}C_{3}\widetilde{x}\right)^{\frac{1}{3}}\nonumber\\
&=-\frac{2\Lambda^{'}_{+}(\beta_{0})\Lambda_{-}(\beta_{0})}{\Lambda_{+}(\alpha_{0})}C\left(-\frac{1}{2}C_{3}\widetilde{x}\right)^{\frac{1}{3}}\triangleq C_{8}\widetilde{x}^{\frac{1}{3}},\nonumber
\end{split}\tag{4.26}\end{align*}
where $$C_{8}=\frac{2\Lambda^{'}_{+}(\beta_{0})\Lambda_{-}(\beta_{0})}{\Lambda_{+}(\alpha_{0})}C\left(\frac{1}{2}C_{3}\right)^{\frac{1}{3}}.$$

In order to estimate $u_{x}$, $\rho_{x}$, $u_{t}$ and $\rho_{t}$,
  we have to estimate $(\lambda_{+})_{x}$, $(\lambda_{-})_{x}$, $(\lambda_{+})_{t}$, $(\lambda_{-})_{t}$.

By (3.2), we can obtain the estimates on $(\lambda_{+})_{x}$ and $(\lambda_{-})_{x}$.

It follows from the system (2.5) that
\begin{align*}
\begin{split}
(\lambda_{+})_{t}&=-\lambda_{-}(\lambda_{+})_{x}=-\Lambda_{-}(\alpha)\Lambda^{'}_{+}(\beta)\frac{\Lambda_{+}(\beta)-\Lambda_{-}(\beta)}{\Lambda_{+}(\beta)-\Lambda_{-}(\alpha)}\nonumber\\
&=-\Lambda^{'}_{-}(\alpha_{0})\Lambda^{'}_{+}(\beta)\frac{\Lambda_{+}(\beta)-\Lambda_{-}(\beta)}{\Lambda_{+}(\beta)-\Lambda_{-}(\alpha)}\widetilde{\alpha}\nonumber
\end{split}\tag{4.29}\end{align*}
and
\begin{align*}
\begin{split}
(\lambda_{-})_{t}&=-\lambda_{+}(\lambda_{-})_{x}=-\Lambda_{+}(\beta)\Lambda^{'}_{-}(\alpha)\frac{\Lambda_{+}(\alpha)-\Lambda_{-}(\alpha)}{\Lambda_{+}(\beta)-\Lambda_{-}(\alpha)}\nonumber\\
&=-\Lambda^{'}_{+}(\beta_{0})\Lambda^{'}_{-}(\alpha)\frac{\Lambda_{+}(\alpha)-\Lambda_{-}(\alpha)}{\Lambda_{+}(\beta)-\Lambda_{-}(\alpha)}\widetilde{\beta}\nonumber\\
&=\Lambda_{-}(\beta_{0})\Lambda^{'}_{+}(\beta_{0})\Lambda^{'}_{-}(\alpha)\frac{\Lambda_{+}(\alpha)-\Lambda_{-}(\alpha)}{\Lambda_{+}(\beta)-\Lambda_{-}(\alpha)}\left(\widetilde{t}+\frac{\widetilde{\alpha}}{\Lambda_{+}(\alpha_{0})}\right).\nonumber
\end{split}\tag{4.30}\end{align*}

We now estimate $u_{x}$, $u_{t}$, $\rho_{x}$ and $\rho_{t}$

For $u_{x}$, noting (4.20)-(4.21) and (3.2), by direct calculations, we have
$$
u_{x}=\frac{(\lambda_{+})_{x}+(\lambda_{-})_{x}}{2}\triangleq \frac{M_{5}}{\Lambda_{+}(\beta)-\Lambda_{-}(\alpha)},
\eqno(4.31)
$$
where $$M_{5}=\frac{\Lambda^{'}_{+}(\beta)(\Lambda_{+}(\beta)-\Lambda_{-}(\beta))}{2}+\frac{\Lambda^{'}_{-}(\alpha)(\Lambda_{+}(\alpha)-\Lambda_{-}(\alpha))}{2}.$$
Similarly, for $u_{t}$, by (4.29) and (4.30), we have
$$
u_{t}=\frac{(\lambda_{+})_{t}+(\lambda_{-})_{t}}{2}\triangleq\frac{M_{6}\widetilde{t}+M_{7}\widetilde{\alpha}}{\Lambda_{+}(\beta)-\Lambda_{-}(\alpha)},
\eqno(4.32)
$$
where $$M_{6}=\Lambda^{'}_{+}(\beta_{0})\Lambda^{'}_{-}(\alpha_{0})\Lambda_{-}(\beta_{0})(\Lambda_{+}(\alpha)-\Lambda_{-}(\alpha))$$
and
$$M_{7}=\frac{\Lambda^{'}_{+}(\beta_{0})\Lambda^{'}_{-}(\alpha_{0})\Lambda_{-}(\beta_{0})\left(\Lambda_{+}(\alpha)-\Lambda_{-}(\alpha)\right)}{\Lambda_{+}(\alpha_{0})}
-\Lambda^{'}_{-}(\alpha_{0})\Lambda^{'}_{+}(\beta)\left(\Lambda_{+}(\beta)-\Lambda_{-}(\beta)\right).$$
For $\rho_{x}$, we have
$$
\rho_{x}=-\frac{2\mu\left((\lambda_{+})_{x}-(\lambda_{-})_{x}\right)}{(\lambda_{+}-\lambda_{-})^{2}}\triangleq \frac{M_{8}}{\left(\Lambda_{+}(\beta)-\Lambda_{-}(\alpha)\right)^{3}},
\eqno(4.33)
$$
where $$M_{8}=2\mu\left(\Lambda^{'}_{+}(\beta)\left(\Lambda_{+}(\beta)-\Lambda_{-}(\beta)\right)-\Lambda^{'}_{-}(\alpha)\left(\Lambda_{+}(\alpha)-\Lambda_{-}(\alpha)\right)\right).$$
For $\rho_{t}$, we have
$$
\rho_{t}=-\frac{2\mu\left((\lambda_{+})_{t}-(\lambda_{-})_{t}\right)}{(\lambda_{+}-\lambda_{-})^{2}}\triangleq
\frac{M_{9}\widetilde{t}+M_{10}\widetilde{\alpha}}{\left(\Lambda_{+}(\beta)-\Lambda_{-}(\alpha)\right)^{3}}
\eqno(4.34)
$$
where $$M_{9}=2\mu\Lambda^{'}_{+}(\beta_{0})\Lambda^{'}_{-}(\alpha_{0})\Lambda_{-}(\beta_{0})\left(\Lambda_{+}(\alpha)-\Lambda_{-}(\alpha)\right)$$
and
$$M_{10}=2\mu\left[\frac{\Lambda^{'}_{+}(\beta_{0})\Lambda^{'}_{-}(\alpha)\Lambda_{-}(\beta_{0})\left(\Lambda_{+}(\alpha)-\Lambda_{-}(\alpha)\right)}{\Lambda_{+}(\alpha_{0})}
+\Lambda^{'}_{-}(\alpha_{0})\Lambda^{'}_{+}(\beta)\left(\Lambda_{+}(\beta)-\Lambda_{-}(\beta)\right)\right].
$$
Then, by (4.20)-(4.21) and (4.31)-(4.34), we have

\textbf{Case I}: $\widetilde{x}=o(|\widetilde{t}|^{\frac{3}{2}}).$

It holds that
$$
|u(t,x)-u(t_{0},x_{0})|\leq |C_{5}||\widetilde{t}|+|C_{6}|\left|\frac{\widetilde{x}}{\widetilde{t}}\right|\triangleq F_{1}|\widetilde{t}|+F_{2}\left|\frac{\widetilde{x}}{\widetilde{t}}\right|,
\eqno(4.35)
$$
$$
|u_{x}|\leq 2| \frac{M_{5}}{\Lambda_{+}(\beta)-\Lambda_{-}(\alpha)}|\leq4|M_{5}|\frac{1}{|-\Lambda^{'}_{+}(\beta_{0})\Lambda_{-}(\beta_{0})\widetilde{t}|}
=\frac{4|M_{5}|}{\Lambda^{'}_{+}(\beta_{0})\Lambda_{-}(\beta_{0})}|\widetilde{t}|^{-1}\triangleq F_{4}|\widetilde{t}|^{-1},
\eqno(4.36)
$$
$$
|u_{t}|\leq2\frac{|M_{6}||\widetilde{t}|+|M_{7}||\widetilde{\alpha}|}{|D_{1}\widetilde{t}|}\triangleq F_{5}+F_{6}\frac{|\widetilde{x}|}{|\widetilde{t}|^{2}},
\eqno(4.37)
$$
$$
|\rho|\leq\frac{4\mu}{\Lambda^{'}_{+}(\beta_{0})\Lambda_{-}(\beta_{0})}|\widetilde{t}|^{-1}\triangleq F_{3}|\widetilde{t}|^{-1},
\eqno(4.38)
$$
$$
|\rho_{x}|\leq\frac{4|M_{8}|}{\left(\Lambda^{'}_{+}(\beta_{0})\Lambda_{-}(\beta_{0})\right)^{3}}|\widetilde{t}|^{-3}\triangleq F_{7}|\widetilde{t}|^{-3},
\eqno(4.39)
$$
and
$$
|\rho_{t}|\leq2\frac{|M_{9}||\widetilde{t}|+|M_{10}||\widetilde{\alpha}|}{|D_{1}\widetilde{t}|^{3}}\triangleq
\frac{F_{8}}{|\widetilde{t}|^{2}}+F_{9}\frac{|\widetilde{x}|}{|\widetilde{t}|^{4}}.
\eqno(4.40)
$$

\textbf{Case II}: $|\widetilde{t}|^{\frac{3}{2}}=o(\widetilde{x}).$

We have
$$
|u(t,x)-u(t_{0},x_{0})|\leq 2|C_{7}||\widetilde{x}^{\frac{1}{3}}|\triangleq F_{10}|\widetilde{x}|^{\frac{1}{3}},
\eqno(4.41)
$$
$$
|u_{x}|\leq\frac{2|M_{5}|}{|D_{2}\tilde{x}^{\frac{2}{3}}|}
\triangleq F_{12}|\widetilde{x}|^{-\frac{2}{3}},
\eqno(4.42)
$$
$$
|u_{t}|\leq2\frac{|M_{6}\widetilde{t}|+|M_{7}\widetilde{\alpha}|}{(D_{2}\widetilde{x})^{\frac{2}{3}}}\triangleq F_{13}|\widetilde{x}|^{-\frac{1}{3}},
\eqno(4.43)
$$
$$
|\rho|\leq\frac{4\mu}{D_{2}}(\widetilde{x})^{-\frac{2}{3}}\triangleq F_{11}|\widetilde{x}|^{-\frac{2}{3}},
\eqno(4.44)
$$
$$
|\rho_{x}|\leq\frac{2|M_{8}|}{D_{2}^{3}}|\widetilde{x}|^{-2}\triangleq F_{14}|\widetilde{x}|^{-2},
\eqno(4.45)
$$
and
$$
|\rho_{t}|\leq2\frac{|M_{9}\widetilde{t}|+|M_{10}\widetilde{\alpha}|}{(D_{2}\widetilde{x})^{2}}\triangleq F_{15}|\widetilde{x}|^{-\frac{5}{3}}.
\eqno(4.46)
$$

\textbf{Case III}: $\widetilde{x}=O(1)|\widetilde{t}|^{\frac{3}{2}}.$

 We obtain
$$
|u(t,x)-u(t_{0},x_{0})|\leq |C_{8}||\widetilde{x}|^{\frac{1}{3}}\triangleq F_{16}|\widetilde{x}|^{\frac{1}{3}},
\eqno(4.47)
$$
$$
|u_{x}|\leq F_{18}|\widetilde{t}|^{-1},
\eqno(4.48)
$$
$$
|u_{t}|\leq2\frac{|M_{6}\widetilde{t}|+|M_{7}C(\frac{1}{2}C_{3}O(1))||\widetilde{t}|^{\frac{1}{2}}}{|D_{3}\widetilde{t}|}\triangleq
N_{25}+N_{26}|\widetilde{t}|^{-\frac{1}{2}}=F_{19}|\widetilde{t}|^{-\frac{1}{2}},
\eqno(4.49)
$$
$$
|\rho|\leq F_{17}|\widetilde{t}|^{-1},
\eqno(4.50)
$$
$$
|\rho_{x}|\leq F_{20}|\widetilde{t}|^{-3},
\eqno(4.51)
$$
and
$$
|\rho_{t}|\leq2\frac{|M_{6}\widetilde{t}|+|M_{7}C(\frac{1}{2}C_{3}O(1))||\widetilde{t}|^{\frac{1}{2}}}{|D_{3}\widetilde{t}|^{3}}\triangleq
N_{31}|\widetilde{t}|^{-2}+N_{32}|\widetilde{t}|^{-\frac{5}{2}}=F_{21}|\widetilde{t}|^{-\frac{5}{2}}.
\eqno(4.52)
$$
Thus, the proof of Theorem 4.1 is completed.
\begin{remark}
From the above discussions, it is easy to say that the constants derived in above estimates are not equal to zero for  initial data satisfying assumptions (H1)-(H5).
\end{remark}
\section{analysis of singularity}
In this section we shall construct physical solutions with new kind of singularity for the system $(2.1)$. To do so, we firstly recall the traditional definition of weak solution. For simplicity, consider the conservation law
$$
u_{t}+f(u)_{x}=0,\quad t>0,\quad x\in\mathbb R,
\eqno(5.1)
$$
with initial data
$$
u(0,x)=u_{0}(x),\quad x\in\mathbb R,
$$
where $u(t,x)=(u_{1},\cdots,u_{n})(t,x)\in\mathbb R^{n},\,n\geq1$ and $f(u)$ is the flux vector-valued function in some open set $\Omega\subset\mathbb R^{n}$.
\begin{definition}
A bounded measurable function $u(t,x)$ is called a weak solution of the Cauchy problem (5.1) with bounded and measurable initial data $u_{0}$, provided that
\newcommand\diff{\,{\mathrm d}}
\[
\iint\limits_{t\geq0} (u\phi_{t}+f(u)\phi_{x}) \diff x \diff t+\int_{t=0}u_{0}\phi dx=0
\eqno(5.2)
\]
holds for all $\phi\in C_{0}^{1}(\mathbb{R^{+}}\times\mathbb{R})$, where $C_{0}^{1}$ denotes the class of $C^{1}$ functions $\phi$, which vanish outside a compact subset in $t\geq0$.
\end{definition}
In this paper, we generalize the above definition as follows
\begin{definition}
A measurable function $u(t,x)$ is called a weak solution of the Cauchy problem (5.1) with bounded and measurable initial data $u_{0}$, provided that (5.2) holds for all $\phi\in C_{0}^{1}(\mathbb{R^{+}}\times\mathbb{R}).$
\end{definition}
\begin{cor}
If $u$ is a weak solution, then it holds that:
\newcommand\diff{\,{\mathrm d}}
\[
\lim\limits_{\epsilon\rightarrow0}\iint\limits_{(t,x)\in D_{\epsilon}} (u\phi_{t}+f(u)\phi_{x}) \diff x \diff t=0,
\eqno(5.3)
\]
where $$D_{\epsilon}=\{(t,x)\mid |t-t_{0}|\leq\epsilon , |x-x_{0}|\leq\epsilon\}$$and $(t_{0},x_{0})$ is a blowup point.
\end{cor}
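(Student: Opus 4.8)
The plan is to reduce the statement to the absolute continuity of the Lebesgue integral. Write $g(t,x)\triangleq u\phi_t+f(u)\phi_x$ for the integrand appearing in (5.2). The starting point is that, since $u$ is a weak solution in the sense of Definition 5.2, the identity (5.2) is a genuine finite numerical identity; in particular the double integral $\iint_{t\geq0} g\,dx\,dt$ is well defined and finite. Because $\phi\in C_{0}^{1}(\mathbb{R}^{+}\times\mathbb{R})$, the derivatives $\phi_t,\phi_x$ are bounded and supported in a fixed compact set $K$, so integrability of $g$ over $\{t\geq0\}$ is the same as $g\in L^{1}(K)$; hence $g\in L^{1}(\mathbb{R}^{+}\times\mathbb{R})$.

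Granting $g\in L^{1}$, the conclusion is immediate. First I would note that $D_{\epsilon}$ is the closed square of side $2\epsilon$ centred at the blowup point $(t_{0},x_{0})$, so its Lebesgue measure is $|D_{\epsilon}|=4\epsilon^{2}\to0$ as $\epsilon\to0$. By the absolute continuity of the Lebesgue integral applied to the $L^{1}$ function $g$ — for every $\eta>0$ there is $\delta>0$ with $\iint_{E}|g|\,dx\,dt<\eta$ whenever $|E|<\delta$ — taking $E=D_{\epsilon}$ with $\epsilon$ small enough that $4\epsilon^{2}<\delta$ yields $\left|\iint_{D_{\epsilon}} g\,dx\,dt\right|\leq\iint_{D_{\epsilon}}|g|\,dx\,dt<\eta$, which is precisely (5.3).

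The only genuine point to check — and the step I expect to be the main obstacle — is the $L^{1}$ integrability of $g$ across the blowup point, since by Theorem 4.1 the density $\rho$ is unbounded as $(t,x)\to(t_{0},x_{0})$, and the conserved quantities and fluxes of (2.1), namely $\rho$, $\rho u$ and $\rho u^{2}+p$, all involve $\rho$. The mean velocity $u=\tfrac12(\Lambda_{+}(\beta)+\Lambda_{-}(\alpha))$ and the pressure $p=p_{0}-\mu^{2}/\rho$ remain bounded near the blowup point (the former by the estimates (4.24)--(4.26), the latter since $p\to p_{0}$ as $\rho\to\infty$), so it suffices to verify $\rho\in L^{1}(O_{\epsilon})$. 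I would do this directly by splitting $O_{\epsilon}$ into the three regions of Theorem 4.1 and balancing each blowup rate against the measure of the corresponding region: in the region $\widetilde{x}=o(|\widetilde{t}|^{3/2})$ one has $|\rho|\leq F_{3}|\widetilde{t}|^{-1}$, but this region has width $O(|\widetilde{t}|^{3/2})$ in $\widetilde{x}$, so $\iint|\widetilde{t}|^{-1}\,d\widetilde{x}\,d\widetilde{t}\lesssim\int|\widetilde{t}|^{1/2}\,d\widetilde{t}<\infty$; in the region $|\widetilde{t}|^{3/2}=o(\widetilde{x})$ one has $|\rho|\leq F_{11}|\widetilde{x}|^{-2/3}$ on a set of height $O(|\widetilde{x}|^{2/3})$ in $\widetilde{t}$, so $\iint|\widetilde{x}|^{-2/3}\,d\widetilde{t}\,d\widetilde{x}\lesssim\int d\widetilde{x}<\infty$; the transition region $\widetilde{x}=O(1)|\widetilde{t}|^{3/2}$ is handled in the same manner. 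Multiplying by the bounded factors $\phi_t,\phi_x$ then preserves integrability, giving $g\in L^{1}$.

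Away from $(t_{0},x_{0})$ the solution is smooth, so no further difficulty arises; alternatively one could establish $g\in L^{1}$ once and for all by passing to the characteristic coordinates $(\alpha,\beta)$ through Lemma 2.2 and using that the Jacobian $J(\alpha,\beta)$ vanishes only on the cusp curve, but the region-splitting argument above is the most transparent route and keeps the dependence on the sharp rates of Theorem 4.1 explicit.
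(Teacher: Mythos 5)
Your argument is correct, but it is a genuinely different proof from the one the paper gives. The paper's proof of Corollary 5.1 is a one-line specialization: it takes $\phi\in C_{0}^{1}(D_{\epsilon})$, for which the initial-data term in (5.2) drops out (once $\epsilon<t_{0}$), so the integral over $D_{\epsilon}$ is \emph{exactly} zero for each such $\epsilon$ and no limit is involved. That deduction, however, only treats test functions supported inside the shrinking square, whereas (5.3) is later invoked (in the proofs of Theorems 5.1 and 6.1) for a fixed global test function, where the contribution of $D_{\epsilon}$ is not identically zero but must be shown to vanish in the limit; the paper handles that case by direct estimation of the singular integrand. Your route --- note that Definition 5.2 (together with Remark 5.1) already forces $g=u\phi_{t}+f(u)\phi_{x}$ to be integrable on a compact set containing the support of $\phi$, then apply absolute continuity of the Lebesgue integral to the sets $D_{\epsilon}$ of measure $4\epsilon^{2}\to0$ --- proves precisely the version of (5.3) that the subsequent theorems use, for an arbitrary fixed $\phi$, and is the more robust reading of the statement. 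Your supplementary check that $\rho\in L^{1}$ near the cusp, by splitting into the three regions of Theorem 4.1 and balancing $|\widetilde{t}|^{-1}$ against the width $O(|\widetilde{t}|^{3/2})$ and $|\widetilde{x}|^{-2/3}$ against an integrable power of $\widetilde{x}$, is essentially the computation the paper carries out inside the proof of Theorem 5.1 rather than in the corollary; for the corollary itself it is not strictly needed, since convergence of the singular integral is part of the hypothesis that $u$ is a weak solution in the sense of Definition 5.2. In short, the paper's proof is shorter but establishes a narrower claim; yours costs one standard measure-theoretic lemma and delivers the statement in the form actually required downstream.
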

\begin{proof} If (5.2) holds, then taking $\phi\in C_{0}^{1}(D_{\epsilon})$ gives
 \[
\iint\limits_{(t,x)\in D_{\epsilon}} (u\phi_{t}+f(u)\phi_{x}) dxdt=0.
\] This is nothing but (5.3)
\end{proof}
Let
$$
u^{\pm}=u(t,x(t)\pm0),
$$
where $x(t)$ is a smooth curve across which $u$ has a jump discontinuity.
As in the traditional sense, we can also get the Rankine-Hugoniot condition (see \cite{14})
$$
s[u]=[f(u)],
\eqno(5.4)
$$
where $s=\frac{dx(t)}{dt}$ is the speed of discontinuity, $[u]=u^{+}-u^{-}$, the jump across $x(t)$ and similarly, $[f]=f(u^{+})-f(u^{-})$, at which we do not require that $u$ has well-defined limits on both sides of $x=x(t)$, i.e., $u$ may be infinity on either side of the discontinuity $x=x(t)$.
\begin{remark}
In our definition, we do not require that $u(t,x)$ is bounded everywhere, while, we need the singular integral in the left hand side of $(5.3)$ is convergent.\end{remark}
\begin{definition}
$u=u(t,x)$ of system (5.1) is said to be the ``Delta-like solution'', if it satisfies Definition 5.2 and $u(t,x)$ is smooth except on some points or curves or other domains, on which $u=\infty$.
\end{definition}
\begin{remark}
By Definition 5.3, the ``Delta-like'' solution is different from the ``shock-wave'' solution. Here, the density of system (2.1) is unbounded.
\end{remark}
\begin{lemma}
For system (2.1), if $x=x(t)$ is a curve of discontinuity and $\rho$ has a jump across $x(t)$, define $\rho$ on both sides of $x=x(t)$ as $\rho^{\pm}=\rho(t,x(t)\pm0)$. Then
$$
\frac{dx(t)}{dt}=u^{+}=u^{-}.
\eqno(5.5)
$$
where $u^{\pm}=u(t,x(t)\pm0)$ are the right and left limits, respectively.
\end{lemma}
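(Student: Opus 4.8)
The plan is to apply the Rankine--Hugoniot relation (5.4) to the two conservation laws in (2.1) and to exploit the special algebraic structure of the Chaplygin pressure law (2.2). For system (2.1) the conserved quantities are $\rho$ and $\rho u$, with fluxes $\rho u$ and $\rho u^2+p$, so (5.4) across $x=x(t)$ reads
$$
s\,[\rho]=[\rho u],\qquad s\,[\rho u]=[\rho u^2+p],
$$
where $s=\frac{dx(t)}{dt}$, $[\,\cdot\,]=(\cdot)^+-(\cdot)^-$, and $(\cdot)^\pm$ denote the one-sided limits at $x(t)\pm0$. Since $\rho$ is assumed to jump, $[\rho]\neq0$, and $\rho^\pm\neq0$.

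First I would isolate the role of the pressure. Because the discontinuity under consideration is a ``Delta-like'' singularity, the density concentrates and $\rho^\pm\to\infty$ along $x=x(t)$; by the Chaplygin law (2.2), $p^\pm=p_0-\mu^2/\rho^\pm\to p_0$, so that the pressure jump vanishes,
$$
[p]=p^+-p^-=-\mu^2\left(\frac{1}{\rho^+}-\frac{1}{\rho^-}\right)=0.
$$
This is precisely the feature that makes the Chaplygin gas special: the pressure stays bounded (indeed converges to $p_0$) even as the density blows up. It is this vanishing of $[p]$ that forces the discontinuity to travel with the fluid velocity; without it the momentum relation would instead select the usual contacts carrying a nonzero mass flux.

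With $[p]=0$ the second Rankine--Hugoniot relation becomes $s\,[\rho u]=[\rho u^2]$. Substituting $[\rho u]=s\,[\rho]$ from the first relation gives $[\rho u^2]=s^2[\rho]$, and hence
$$
[\rho(u-s)^2]=[\rho u^2]-2s\,[\rho u]+s^2[\rho]=s^2[\rho]-2s\cdot s[\rho]+s^2[\rho]=0.
$$
Introducing the mass flux $j\triangleq\rho^+(u^+-s)=\rho^-(u^--s)$, which is well defined by the first relation $[\rho(u-s)]=0$, the identity $[\rho(u-s)^2]=0$ factors as
$$
\rho^+(u^+-s)^2-\rho^-(u^--s)^2=j\big[(u^+-s)-(u^--s)\big]=j\,(u^+-u^-)=0.
$$
If $j\neq0$ then $u^+=u^-$, whence $s=u^\pm$ by the first relation; but then $j=\rho^\pm(u^\pm-s)=0$, a contradiction, so in fact $j=0$. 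From $j=0$ and $\rho^\pm\neq0$ we obtain $u^+=s$ and $u^-=s$, that is, $\frac{dx(t)}{dt}=u^+=u^-$, which is (5.5).

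The main obstacle is the rigorous manipulation of the jump brackets when $\rho^\pm=+\infty$, where expressions such as $[\rho]$ and $j=\rho^\pm(u^\pm-s)$ are of indeterminate form $\infty-\infty$ or $\infty\cdot0$. I would justify the computation within the generalized weak-solution framework of Definition 5.2 and Corollary 5.1, in which only the convergence of the singular integral is required; alternatively, one can run the above algebra for large finite densities and pass to the limit. In the latter approach the exact relations at finite density yield $(j^2-\mu^2)(1/\rho^+-1/\rho^-)=0$, i.e. the genuine contact discontinuities travelling with the characteristic speeds $u\mp c$, $c=\mu/\rho$; continuity of the characteristic speed across such a contact ($u^\pm\mp c^\pm=s$) forces $u^+-u^-=\pm(c^+-c^-)$, so that both one-sided velocities converge to $s$ as $c=\mu/\rho\to0$, recovering (5.5) in the limit. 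Throughout, the only structural inputs are $[\rho]\neq0$, $\rho^\pm\neq0$, and the boundedness of the Chaplygin pressure at infinite density.
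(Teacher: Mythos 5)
Your proposal is correct and follows essentially the same route as the paper: both apply the Rankine--Hugoniot relations for mass and momentum with a vanishing pressure jump (the paper's Remark 5.3) and then eliminate $s$ algebraically to force $u^{+}=u^{-}=s$; your factorization via the mass flux $j=\rho^{\pm}(u^{\pm}-s)$ is just a reorganization of the paper's identity $(\rho^{+}-\rho^{-})(\rho^{+}(u^{+})^{2}-\rho^{-}(u^{-})^{2})=(\rho^{+}u^{+}-\rho^{-}u^{-})^{2}$, which reduces to $\rho^{+}\rho^{-}(u^{+}-u^{-})^{2}=0$. Your closing discussion of why $[p]=0$ and of the indeterminate brackets when $\rho^{\pm}=\infty$ is more careful than the paper's one-line remark, but it does not change the argument.
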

\begin{proof}
If $x=x(t)$ is a curve of discontinuity of system (2.1), then, by (5.4), we have
$$
s(\rho^{+}-\rho^{-})=\rho^{+}u^{+}-\rho^{-}u^{-}
\eqno(5.6)
$$
and
$$
s(\rho^{+}u^{+}-\rho^{-}u^{-})=\rho^{+}(u^{+})^{2}-\rho^{-}(u^{-})^{2}.
\eqno(5.7)
$$
Assume $\rho^{+}\neq\rho^{-}$. Then by (5.6) and (5.7), we have
$$(\rho^{+}-\rho^{-})(\rho^{+}(u^{+})^{2}-\rho^{-}(u^{-})^{2})=(\rho^{+}u^{+}-\rho^{-}u^{-})^{2}.$$
By a simple calculation, we get
$$
u^{+}=u^{-},
$$
thus, by (5.6), we obtain $$
\frac{dx(t)}{dt}=u^{+}=u^{-}.
$$
\end{proof}
\begin{remark}
Since on both sides of the discontinuity $x=x(t)$, the pressure $p=0$, (5.7) holds accordingly.
\end{remark}
\begin{theorem}
Under the assumptions (H1)-(H5) in Section 3, the solution of the Cauchy problem (2.1), (2.6) constructed by the method of characteristics satisfy (5.3) in the strip $\{(t,x)\mid t\in[0,t_{0}],x\in\mathbb{R}\}$ and $t_{0}$ is defined by (3.10).
\end{theorem}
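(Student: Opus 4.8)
The plan is to reduce the verification of the singular integral identity (5.3) to a single $L^1$-integrability estimate for the density near the cusp point. First I would note that, by the existence and uniqueness theorem recalled after Remark 3.3 together with Lemma 3.6, the solution $(\rho,u)$ constructed by characteristics is classical throughout the strip $\{0\le t\le t_0\}$ with the single exception of the cusp $(t_0,x_0)$. Indeed the set on which $\rho$ becomes infinite is exactly the image under $\Pi$ of the singular curve $\Lambda_-(\alpha)=\Lambda_+(\beta)$, and by Lemma 3.6 the time coordinate along this curve attains its minimum $t_0$ only at $(\alpha_0,\beta_0)$; hence $\Gamma_l\cup\Gamma_r$ lies in $\{t\ge t_0\}$ and meets the line $t=t_0$ solely at $(t_0,x_0)$. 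Consequently, since $\phi\in C_0^1$ has bounded first derivatives, proving (5.3) in the strip amounts to showing
$$\iint_{D_\epsilon\cap\{t\le t_0\}}\bigl(|\rho|+|\rho u|+|\rho u^2+p|\bigr)\,dx\,dt\longrightarrow 0\quad(\epsilon\to 0).$$

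The second step is to discard every term except $\rho$ itself. By (4.20) the velocity satisfies $u(t_0,x_0)=0$, and the estimates of Theorem 4.1 show that $u$ stays bounded on $D_\epsilon$ (in the cusp regime $\widetilde x=o(|\widetilde t|^{3/2})$ one has $|\widetilde x/\widetilde t|=o(|\widetilde t|^{1/2})\to 0$); moreover $p=p_0-\mu^2/\rho$ is bounded because $\rho^{-1}=(\Lambda_+(\beta)-\Lambda_-(\alpha))/(2\mu)\to 0$ at the blowup point. Therefore $|\rho u|$ and $|\rho u^2|$ are dominated by a fixed multiple of $|\rho|$, while $\iint_{D_\epsilon}|p|\lesssim|D_\epsilon|=O(\epsilon^2)\to 0$. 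Everything thus reduces to the single claim $\iint_{D_\epsilon\cap\{t\le t_0\}}|\rho|\,dx\,dt\to 0$.

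For this central estimate I would partition $D_\epsilon$ into the three regimes of Theorem 4.1 and invoke the corresponding bounds $|\rho|\le F_3|\widetilde t|^{-1}$, $|\rho|\le F_{11}|\widetilde x|^{-2/3}$ and $|\rho|\le F_{17}|\widetilde t|^{-1}$. The decisive geometric observation is that these growth rates match at the cusp scale $|\widetilde x|\sim|\widetilde t|^{3/2}$, where $|\widetilde x|^{-2/3}\sim|\widetilde t|^{-1}$. Slicing at fixed $\widetilde t$ then gives
$$\int_{|\widetilde x|\le\epsilon}|\rho|\,d\widetilde x\lesssim\int_{|\widetilde x|\lesssim|\widetilde t|^{3/2}}|\widetilde t|^{-1}\,d\widetilde x+\int_{|\widetilde t|^{3/2}\lesssim|\widetilde x|\le\epsilon}|\widetilde x|^{-2/3}\,d\widetilde x\lesssim|\widetilde t|^{1/2}+\epsilon^{1/3}\lesssim\epsilon^{1/3},$$
and integrating over $\widetilde t\in[-\epsilon,0]$ yields $\iint_{D_\epsilon}|\rho|\lesssim\epsilon^{4/3}\to 0$, which establishes (5.3).

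The main obstacle is precisely this integrability computation. A priori $\rho$ blows up like $|\widetilde t|^{-1}$, a rate that is not integrable against the time variable alone, so the argument cannot rely on the pointwise blow-up rate of $\rho$; it must exploit the cusp geometry, which confines the strongest singularity to a set of $\widetilde x$-width $\sim|\widetilde t|^{3/2}$. Verifying that this shrinking of the large-$\rho$ region exactly compensates the growth — equivalently, that the two blow-up rates of Theorem 4.1 glue across $|\widetilde x|\sim|\widetilde t|^{3/2}$ into a genuinely locally integrable density — is the heart of the matter and depends essentially on the sharp case-by-case estimates proved in Theorem 4.1.
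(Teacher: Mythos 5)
Your proposal is correct and follows essentially the same route as the paper: both arguments split the half-neighborhood $\{t\le t_0\}$ of the cusp along the curves $|\widetilde{x}|\sim|\widetilde{t}|^{3/2}$, invoke the regime-by-regime bounds of Theorem 4.1, and use the fact that the $|\widetilde{t}|^{-1}$ blowup is confined to the thin inner cusp region where it is comparable to the integrable rate $|\widetilde{x}|^{-2/3}$. Your reduction of $|\rho u|$, $|\rho u^{2}|$ and $p$ to a fixed multiple of $|\rho|$ via the boundedness of $u$ and $\rho^{-1}$ is a mild streamlining of the paper's separate asymptotic estimates for each flux term, but the decisive computation is the same.
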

\begin{proof} It suffices to check that
\newcommand\diff{\,{\mathrm d}}
\[
\lim\limits_{\epsilon\rightarrow0}\iint\limits_{(t,x)\in D^{-}_{\epsilon}} (\rho\phi_{t}+(\rho u)\phi_{x}) \diff x \diff t =0
\eqno(5.8)
\]
and
\[
\lim\limits_{\epsilon\rightarrow0}\iint\limits_{(t,x)\in D^{-}_{\epsilon}} ((\rho u)\phi_{t}+(\rho u^{2})\phi_{x}) \diff x \diff t=0,
\eqno(5.9)
\]
where
$$
D^{-}_{\epsilon}=\{(t,x)\mid|x-x_{0}|\leq\epsilon,\quad t_{0}-\epsilon\leq t\leq t_{0}\}
$$

By Lemma 4.1, we have $$C_{1}<0.$$ We prove Theorem 5.1 by distinguishing the following three possible cases: $$B=O(1)A^{2},\quad B=o(A^{2}) \quad\text{and}\quad A^{2}=o(B),$$
where $A$ and $B$ are defined in the proof of Lemma 4.2.
From $B=A^{2}$, we have
$$
\frac{C_{1}^{3}\widetilde{t}^{3}}{27}=\frac{1}{4}(C_{2}\widetilde{t}^{2}+C_{3}\widetilde{x})^{2}.
\eqno(5.10)
$$ This implies that $$\widetilde{t}\leq0.$$ Equation (5.10) defines two curves passing through $(t_{0},x_{0})$ read
$$
\widetilde{x}=\frac{2C_{1}(-C_{1})^{\frac{1}{2}}}{(27)^{\frac{1}{2}}C_{3}}(-\widetilde{t})^{\frac{3}{2}}\triangleq G_{1}(-\widetilde{t})^{\frac{3}{2}}
\eqno(5.11)
$$
and
$$
\widetilde{x}=-\frac{2C_{1}(-C_{1})^{\frac{1}{2}}}{(27)^{\frac{1}{2}}C_{3}}(-\widetilde{t})^{\frac{3}{2}}\triangleq -G_{1}(-\widetilde{t})^{\frac{3}{2}},
\eqno(5.12)
$$
respectively. So we can break $D^{-}_{\epsilon}$ into $T_{1}$ and $T_{2}$ defined by
$$T_{1}=\{(t,x)\in D^{-}_{\epsilon}\mid B\geq A^{2}\}$$
and
$$T_{2}=\{(t,x)\in D^{-}_{\epsilon}\mid B<A^{2}\}.$$
Thus, it suffices to prove that (5.8) and (5.9) hold in $T_{1}\bigcup T_{2}$.

Case $A^{2}=o(B)$, namely,  $\widetilde{x}=o(|\widetilde{t}|^{\frac{3}{2}})$

By Theorem 4.1, we have the following asymptotic solutions:
$$\rho \approx k_{1}\widetilde{t}^{-1},
\eqno(5.13)
$$
$$u \approx k_{2}\widetilde{t}+k_{3}\frac{\widetilde{x}}{\widetilde{t}},
\eqno(5.14)
$$
so
 $$
 \rho u \approx m_{1}+m_{2}\widetilde{x}\tilde{t}^{-2},
 \eqno(5.15)
 $$
 $$
 \rho u^{2} \approx m_{3}\widetilde{t}+m_{4}\widetilde{x}^{2}\widetilde{t}^{-2}.
 \eqno(5.16)
 $$

Case $B=O(1)A^{2}$, namely, $\widetilde{x}=O(1)|\widetilde{t}|^{\frac{3}{2}}.$

We obtain
$$
\rho \approx k_{4}\widetilde{t}^{-1}=k_{5}\widetilde{x}^{-\frac{2}{3}},
\eqno(5.17)
$$
$$u \approx k_{6}\widetilde{x}^{\frac{1}{3}},
\eqno(5.18)
$$
so
$$
\rho u \approx m_{5}\widetilde{x}^{-\frac{1}{3}},
\eqno(5.19)
$$
$$
\rho u^{2}\approx m_{6}.
\eqno(5.20)
$$

Case $B=o(A^{2})$, namely, $|\widetilde{t}|^{\frac{3}{2}}=o(\widetilde{x})$.

By Theorem 4.1, we have the following asymptotic solutions:
$$
\rho \approx k_{7}\widetilde{x}^{-\frac{2}{3}},
\eqno(5.21)
$$
$$u \approx k_{8}\widetilde{x}^{\frac{1}{3}},
\eqno(5.22)
$$
so
$$
\rho u \approx m_{7}\widetilde{x}^{-\frac{1}{3}},
\eqno(5.23)
$$
$$
\rho u^{2} \approx m_{8},
\eqno(5.24)
$$
where $k_{i}$ $(i=1,\cdots,8)$ and $m_{i}$ $(i=1,\cdots,8)$ are constants depending only on the initial data at $(\alpha_{0},\beta_{0})$.
By (5.8) and (5.9), it follows that
\begin{align*}
\begin{split}
P&=\lim\limits_{\epsilon\rightarrow0}\iint\limits_{(t,x)\in D^{-}_{\epsilon}} (\rho\phi_{t}+(\rho u)\phi_{x}) \diff x \diff t\nonumber\\
 &=\lim\limits_{\epsilon\rightarrow0}\iint\limits_{(t,x)\in  T_{1}} (\rho\phi_{t}+(\rho u)\phi_{x}) \diff x \diff t
 +\lim\limits_{\epsilon\rightarrow0}\iint\limits_{(t,x)\in  T_{2}} (\rho\phi_{t}+(\rho u)\phi_{x}) \diff x \diff t\nonumber\\
&\triangleq P_{1}+P_{2}
\end{split}\tag{5.25}
\end{align*}
and
\begin{align*}
\begin{split}
Q&=\lim\limits_{\epsilon\rightarrow0}\iint\limits_{(t,x)\in D_{\epsilon}} ((\rho u)\phi_{t}+(\rho u^{2})\phi_{x}) \diff x \diff t\nonumber\\
&=\lim\limits_{\epsilon\rightarrow0}\iint\limits_{(t,x)\in  T_{1}} ((\rho u)\phi_{t}+(\rho u^{2}+p)\phi_{x}) \diff x \diff t+\lim\limits_{\epsilon\rightarrow0}\iint\limits_{(t,x)\in  T_{2}} ((\rho u)\phi_{t}+(\rho u^{2}+p)\phi_{x}) \diff x \diff t\nonumber\\
&\triangleq Q_{1}+Q_{2}.
\end{split}\tag{5.26}
\end{align*}
We next prove  $$|P_{i}|=0\: (i=1,2) \quad \text{and} \quad|Q_{i}|=0\:(i=1,2).$$ Define
$$K_{1}=\{(t,x)\mid\widetilde{x}=o(|\widetilde{t}|^{\frac{3}{2}})\},$$
$$K_{2}= \{(t,x)\mid\widetilde{x}=O(1)|\widetilde{t}|^{\frac{3}{2}}\}$$
and
$$K_{3}= \{(t,x)\mid|\widetilde{t}|^{\frac{3}{2}}=o(\widetilde{x})\}.$$
Then by (5.13)-(5.24), we obtain
\begin{eqnarray}
|P_{1}|&\leq&\lim\limits_{\epsilon\rightarrow0}\iint\limits_{(t,x)\in  T_{1}} (|\rho||\phi_{t}|+|(\rho u)||\phi_{x}|) \diff x \diff t\nonumber\\
&\leq&\max|\phi_{t}|\lim\limits_{\epsilon\rightarrow0}\iint\limits_{(t,x)\in  T_{1}} |\rho| \diff x
\diff t+\max|\phi_{x}|\lim\limits_{\epsilon\rightarrow0}\iint\limits_{(t,x)\in  T_{1}} |\rho u| \diff x\diff t\nonumber\\
&\leq&\max|\phi_{t}|\lim\limits_{\epsilon\rightarrow0}\iint\limits_{(t,x)\in T_{1}\bigcap K_{1}} |\rho| \diff x
\diff t+\max|\phi_{x}|\lim\limits_{\epsilon\rightarrow0}\iint\limits_{(t,x)\in  T_{1}\bigcap K_{1}} |\rho u| \diff x\diff t\nonumber\\
& &+\max|\phi_{t}|\lim\limits_{\epsilon\rightarrow0}\iint\limits_{(t,x)\in T_{1}\bigcap K_{2}} |\rho| \diff x
\diff t+\max|\phi_{x}|\lim\limits_{\epsilon\rightarrow0}\iint\limits_{(t,x)\in T_{1}\bigcap K_{2}} |\rho u| \diff x\diff t\nonumber\\
&\leq&\max|\phi_{t}|\lim\limits_{\epsilon\rightarrow0}\iint\limits_{(t,x)\in T_{1}\bigcap K_{1}} |k_{1}\widetilde{t}^{-1}| \diff x
\diff t+\max|\phi_{x}|\lim\limits_{\epsilon\rightarrow0}\iint\limits_{(t,x)\in  T_{1}\bigcap K_{1}} |m_{1}+m_{2}\widetilde{x}\tilde{t}^{-2}| \diff x\diff t\nonumber\\
& &+\max|\phi_{t}|\lim\limits_{\epsilon\rightarrow0}\iint\limits_{(t,x)\in  T_{1}\bigcap K_{2}} |k_{5}\widetilde{x}^{-\frac{2}3{}}| \diff x
\diff t+\max|\phi_{x}|\lim\limits_{\epsilon\rightarrow0}\iint\limits_{(t,x)\in  T_{1}\bigcap K_{2}} |m_{5}\widetilde{x}^{-\frac{1}{3}}| \diff x\diff t\nonumber\\
&\leq&\max|\phi_{t}|\lim\limits_{\epsilon\rightarrow0}\int_{-G_{1}(-\widetilde{\tau})^{\frac{3}{2}}}^{G_{1}(-\widetilde{\tau})^{\frac{3}{2}}}\int_{-\epsilon}^{0}(|k_{1}\widetilde{x}^{-\frac{2}{3}}|+
|k_{5}\widetilde{x}^{-\frac{2}{3}}|)d\widetilde{t}d\widetilde{x}\nonumber\\
& &+\max|\phi_{x}|\lim\limits_{\epsilon\rightarrow0}\int_{-G_{1}(-\widetilde{\tau})^{\frac{3}{2}}}^{G_{1}(-\widetilde{\tau})^{\frac{3}{2}}}\int_{-\epsilon}^{0}(|m_{1}+m_{2}\widetilde{x}\tilde{t}^{-2}|+
|m_{5}\widetilde{x}^{-\frac{1}{3}}| )d\widetilde{t}d\widetilde{x}\nonumber\\
&=&0.\nonumber
\end{eqnarray}
\begin{eqnarray}
|P_{2}|&\leq&\lim\limits_{\epsilon\rightarrow0}\iint\limits_{(t,x)\in  T_{2}} (|\rho||\phi_{t}|+|(\rho u)||\phi_{x}|) \diff x \diff t\nonumber\\
&\leq&\max|\phi_{t}|\lim\limits_{\epsilon\rightarrow0}\iint\limits_{(t,x)\in T_{2}} |\rho| \diff x
\diff t+\max|\phi_{x}|\lim\limits_{\epsilon\rightarrow0}\iint\limits_{(t,x)\in T_{2}} |\rho u| \diff x\diff t\nonumber\\
&\leq&\max|\phi_{t}|\lim\limits_{\epsilon\rightarrow0}\iint\limits_{(t,x)\in T_{2}\bigcap K_{3}} |\rho| \diff x
\diff t+\max|\phi_{x}|\lim\limits_{\epsilon\rightarrow0}\iint\limits_{(t,x)\in  T_{2}\bigcap K_{3}} |\rho u| \diff x\diff t\nonumber\\
& &+\max|\phi_{t}|\lim\limits_{\epsilon\rightarrow0}\iint\limits_{(t,x)\in  T_{2}\bigcap K_{2}} |\rho| \diff x
\diff t+\max|\phi_{x}|\lim\limits_{\epsilon\rightarrow0}\iint\limits_{(t,x)\in T_{2}\bigcap K_{2}} |\rho u| \diff x\diff t\nonumber\\
&\leq&\max|\phi_{t}|\lim\limits_{\epsilon\rightarrow0}\iint\limits_{(t,x)\in T_{2}\bigcap K_{3}} |k_{1}\widetilde{x}^{-\frac{2}{3}}| \diff x
\diff t+\max|\phi_{x}|\lim\limits_{\epsilon\rightarrow0}\iint\limits_{(t,x)\in  T_{2}\bigcap K_{3}} |m_{7}\widetilde{x}^{-\frac{1}{3}}| \diff x\diff t\nonumber\\
& &+\max|\phi_{t}|\lim\limits_{\epsilon\rightarrow0}\iint\limits_{(t,x)\in  T_{2}\bigcap K_{2}} |k_{5}\widetilde{x}^{-\frac{2}{3}}| \diff x
\diff t+\max|\phi_{x}|\lim\limits_{\epsilon\rightarrow0}\iint\limits_{(t,x)\in  T_{2}\bigcap K_{2}} |m_{5}\widetilde{x}^{-\frac{1}{3}}| \diff x\diff t\nonumber\\
&\leq&\max|\phi_{t}|\lim\limits_{\epsilon\rightarrow0}(\int_{-\epsilon}^{-G_{1}(-\widetilde{\tau})^{\frac{3}{2}}}\int_{-\epsilon}^{0}+\int_{G_{1}(-\widetilde{\tau})^{\frac{3}{2}}}^{\epsilon}\int_{-\epsilon}^{0})(|k_{1}\widetilde{x}^{-\frac{2}{3}}|+
|k_{5}\widetilde{x}^{-\frac{2}{3}}|)d\widetilde{t}d\widetilde{x}\nonumber\\
& &+\max|\phi_{x}|\lim\limits_{\epsilon\rightarrow0}(\int_{-\epsilon}^{-G_{1}(-\widetilde{\tau})^{\frac{3}{2}}}\int_{-\epsilon}^{0}+\int_{G_{1}(-\widetilde{\tau})^{\frac{3}{2}}}^{\epsilon}\int_{-\epsilon}^{0})(
|m_{7}\widetilde{x}^{-\frac{1}{3}}|+|m_{5}\widetilde{x}^{-\frac{1}{3}}|)d\widetilde{t}d\widetilde{x}\nonumber\\
&=&0.\nonumber
\end{eqnarray}
Here
$$max|\phi_{x}|=sup\:\{|\phi_{x}(t,x)|\mid (t,x)\in D^{-}_{\epsilon}\},\quad max|\phi_{t}|=sup\:\{|\phi_{t}(t,x)|\mid (t,x)\in D^{-}_{\epsilon}\}$$ and $\tilde{\tau}$ is defined by (5.11) and (5.12).
So we have $|P|\rightarrow0$, as $\epsilon\rightarrow0$. That is to say, (5.8) holds.

Similarly, we can prove (5.9).
Thus, the theorem is proved.
\end{proof}
\begin{remark}
Here and throughout the following, we will use the convention $\bar{A}\approx \bar{B}$ whenever $\bar{C}^{-1}\bar{A}\leq \bar{B}\leq \bar{C}\bar{A}$ for a constant $\bar{C}\neq0$.
\end{remark}
\section{Delta-like solutions}
In this section, by the method of characteristics, we construct some weak solutions with a new kind of singularities, named ``Delta-like'' solution.\\
\textbf{6.1. Delta-like solution with point-shape singularity.}

We first consider a simple case, in which we assume

Assumption (A1):
$$
\Lambda_{-}(x)<\Lambda_{+}(x),\quad \forall \:x\in \mathbb{R};
\eqno(6.1)
$$
Furthermore, we assume that there exist $\alpha_{0}$ and $\beta_{0}$ with $\alpha_{0}<\beta_{0}$ satisfying

Assumption (A2):
$$
\Lambda_{-}(\alpha_{0})=\Lambda_{+}(\beta_{0})=0;
\eqno(6.2)
$$

Assumption (A3):$$
\Lambda^{'}_{-}(\alpha_{0})=\Lambda^{'}_{+}(\beta_{0})=0;
\eqno(6.3)
$$

Assumption (A4):$$
\Lambda^{''}_{-}(\alpha_{0})<0,\quad \Lambda^{''}_{+}(\beta_{0})>0.
\eqno(6.4)
$$
$\Lambda_{\pm}(x)$ satisfying assumptions (A1)-(A4) are shown in Figure 6.
\begin{figure}[h]
\centering
\includegraphics[trim=0 0 0 0, scale=0.80]{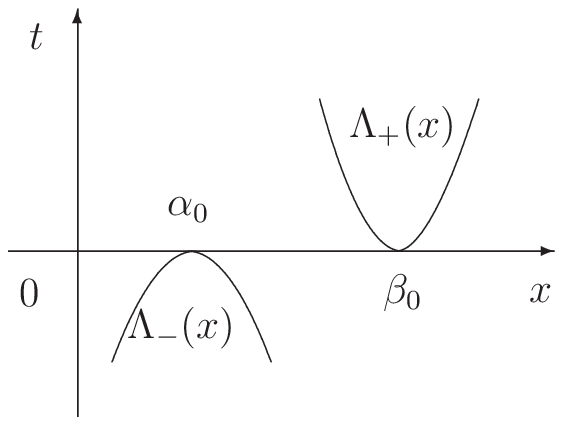}
\caption{$\Lambda_{\pm}(x)$ satisfying the assumptions (A1)-(A4)}
\end{figure}

We now derive the Delta-like solution with point-shape singularity
\begin{theorem}
Under the assumptions (A1)-(A4), it holds that in the neighborhood of the blowup point
\begin{displaymath}
|\rho| \leq \left\{\begin{array}{ll}
B_{1}|\widetilde{t}|^{-2},&\textrm{$\widetilde{x}=o(\widetilde{t}^{3})$},\\
B_{2}|\widetilde{x}|^{-\frac{2}{3}},&\textrm{$\widetilde{t}^{3}=o(\widetilde{x})$},\\
B_{3}|\widetilde{x}|^{-\frac{2}{3}},&\textrm{$\widetilde{x}=O(1)\widetilde{t}^{3}$}
\end{array}\right.
\end{displaymath}
and
\begin{displaymath}
|u| \leq \left\{\begin{array}{cc}
B_{4}|\widetilde{t}|^{2},&\textrm{$\widetilde{x}=o(\widetilde{t}^{3})$},\\
B_{5}|\widetilde{x}|^{\frac{2}{3}},&\textrm{$\widetilde{t}^{3}=o(\widetilde{x})$},\\
B_{6}|\widetilde{x}|^{\frac{2}{3}},&\textrm{$\widetilde{x}=O(1)\widetilde{t}^{3}$}
\end{array}\right.
\end{displaymath}
for sufficiently small $\widetilde{t}$ and $\widetilde{x}$ defined in Section 4 and $B_{i}\,(i=1,\cdots,6)$ are positive constants depending only on the initial data at $(\alpha_{0},\beta_{0})$.
Furthermore, the solution $(\rho,\,u)$ is Delta-like solution, which we call it Delta-like solution with point-shape singularity.
\end{theorem}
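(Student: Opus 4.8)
The plan is to rerun the scheme of Section~4 verbatim, but with the Taylor expansions recomputed under the degenerate hypotheses (A1)--(A4) in place of (H1)--(H5). Using the normalization (6.2) and expanding the kernels of (2.10)--(2.11) about $(\alpha_{0},\beta_{0})$, I set $a=\Lambda_{+}(\alpha_{0})>0$ and $b=\Lambda_{-}(\beta_{0})<0$ (both nonzero by (A1)). Because $\Lambda'_{-}(\alpha_{0})=\Lambda'_{+}(\beta_{0})=0$ by (A3), the quadratic terms that dominated $\widetilde{x}$ in Section~4 now vanish, and one finds
$$\widetilde{t}=-\frac{\widetilde{\alpha}}{a}-\frac{\widetilde{\beta}}{b}+\cdots,\qquad \widetilde{x}=c_{1}\widetilde{\alpha}^{3}+c_{2}\widetilde{\beta}^{3}+\cdots,\quad c_{1}=\frac{-\Lambda''_{-}(\alpha_{0})}{6a}>0,\ c_{2}=\frac{\Lambda''_{+}(\beta_{0})}{-6b}>0,$$
so that the time increment is \emph{linear} while the space increment is \emph{cubic} in $(\widetilde{\alpha},\widetilde{\beta})$. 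Since $\Lambda_{-}(\alpha)=\tfrac12\Lambda''_{-}(\alpha_{0})\widetilde{\alpha}^{2}+\cdots$ and $\Lambda_{+}(\beta)=\tfrac12\Lambda''_{+}(\beta_{0})\widetilde{\beta}^{2}+\cdots$, formulas (4.20)--(4.21) reduce to
$$u=\tfrac14\!\left(\Lambda''_{+}(\beta_{0})\widetilde{\beta}^{2}+\Lambda''_{-}(\alpha_{0})\widetilde{\alpha}^{2}\right)+\cdots,\qquad \rho=\frac{2\mu}{\tfrac12\Lambda''_{+}(\beta_{0})\widetilde{\beta}^{2}-\tfrac12\Lambda''_{-}(\alpha_{0})\widetilde{\alpha}^{2}+\cdots},$$
where by (6.4) the denominator of $\rho$ is a \emph{positive definite} quadratic form in $(\widetilde{\alpha},\widetilde{\beta})$. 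Every estimate therefore reduces to controlling the scaling of $\widetilde{\alpha},\widetilde{\beta}$ in terms of $(\widetilde{t},\widetilde{x})$.

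Next, mirroring Lemmas~4.1--4.2, I would eliminate $\widetilde{\beta}$ via the leading relation $\widetilde{\beta}=-b(\widetilde{t}+\widetilde{\alpha}/a)$ and substitute into the cubic for $\widetilde{x}$, producing a cubic equation in $\widetilde{\alpha}$ whose coefficients are polynomials in $\widetilde{t}$ and which carries $\widetilde{x}$ in its constant term; solving it by Cardano's formula and analyzing its discriminant as in Lemma~4.2, the natural balance is now $\widetilde{x}\sim\widetilde{t}^{3}$ (rather than $\widetilde{t}^{3/2}$), which is exactly the trichotomy in the statement. In the regime $\widetilde{x}=o(\widetilde{t}^{3})$ the pure cubic $c_{1}\widetilde{\alpha}^{3}+c_{2}\widetilde{\beta}^{3}$ must nearly cancel, pinning $(\widetilde{\alpha},\widetilde{\beta})$ to the line $\widetilde{\alpha}=-(c_{2}/c_{1})^{1/3}\widetilde{\beta}$ and giving $\widetilde{\alpha},\widetilde{\beta}\sim\widetilde{t}$; in the regime $\widetilde{t}^{3}=o(\widetilde{x})$ the term $\widetilde{t}$ is negligible, $\widetilde{\beta}\approx-(b/a)\widetilde{\alpha}$, and $\widetilde{\alpha}\sim\widetilde{x}^{1/3}$; in the balanced regime $\widetilde{x}=O(1)\widetilde{t}^{3}$ one has $\widetilde{\alpha},\widetilde{\beta}\sim\widetilde{t}\sim\widetilde{x}^{1/3}$. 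Inserting these scalings into the two quadratic forms above yields $\rho\sim\widetilde{t}^{-2}$, $u\sim\widetilde{t}^{2}$ in the first regime and $\rho\sim\widetilde{x}^{-2/3}$, $u\sim\widetilde{x}^{2/3}$ in the other two (the last two sharing a rate but with distinct constants), which are precisely the six bounds; the possible $\widetilde{x}/\widetilde{t}$ correction to $u$ in the first regime is $o(\widetilde{t}^{2})$ and hence absorbed.

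To certify that $(\rho,u)$ is a Delta-like solution with point-shape singularity, I would argue as in Theorem~5.1. For each fixed $t<t_{0}$ the denominator of $\rho$ is strictly positive, so $\rho$ is finite and the solution is classical away from $(t_{0},x_{0})$, while $\rho\to\infty$ only as $(t,x)\to(t_{0},x_{0})$; thus the concentration is at a single point, which is the point-shape property. It then remains to verify (5.3), namely that $\iint_{D_{\epsilon}}(\rho\phi_{t}+\rho u\,\phi_{x})\,dx\,dt$ and its momentum analogue vanish as $\epsilon\to0$. Splitting $D^{-}_{\epsilon}$ along the curves $\widetilde{x}\sim\pm\widetilde{t}^{3}$ into the three regimes and inserting the bounds, the dominant term is $\iint\rho\,dx\,dt$: in the first regime the $\widetilde{x}$-extent is $O(|\widetilde{t}|^{3})$, so this is $\lesssim\int_{-\epsilon}^{0}|\widetilde{t}|^{-2}\cdot|\widetilde{t}|^{3}\,d\widetilde{t}=\int_{-\epsilon}^{0}|\widetilde{t}|\,d\widetilde{t}\to0$, and in the other two regimes it is $\lesssim\iint|\widetilde{x}|^{-2/3}\,dx\,dt\to0$ since $|\widetilde{x}|^{-2/3}$ is integrable; the products obey $|\rho u|\le\rho\,|u|=O(1)$ and $|\rho u^{2}|=O(|u|)$, hence are no more singular, and all the integrals converge to zero.

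The main obstacle I anticipate is the discriminant and case analysis for the cubic in the degenerate setting. Unlike Section~4, the leading cubic form $c_{1}\widetilde{\alpha}^{3}+c_{2}\widetilde{\beta}^{3}$ has \emph{both} coefficients positive, so the regime $\widetilde{x}=o(\widetilde{t}^{3})$ is governed by a delicate near-cancellation along a single line; one must check that the Cardano roots remain real and single-valued near this line and that the subleading perturbation of $\widetilde{\alpha}$ (of order $\widetilde{x}/\widetilde{t}^{2}$) does not degrade the stated rates. A secondary technical point is that the blowup here is stronger, $\rho\sim\widetilde{t}^{-2}$ versus $\widetilde{t}^{-1}$ in Section~5, so integrability is no longer automatic and relies precisely on the \emph{cubic} vanishing of $\widetilde{x}$, which shrinks the measure of the worst region like $|\widetilde{t}|^{3}$ and just compensates the $|\widetilde{t}|^{-2}$ growth.
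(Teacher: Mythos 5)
Your proposal is correct and follows essentially the same route as the paper: you recompute the expansions (4.1)--(4.2) under (A1)--(A4) so that $\widetilde{t}$ is linear and $\widetilde{x}$ cubic in $(\widetilde{\alpha},\widetilde{\beta})$, solve the resulting cubic by Cardano's formula to obtain the trichotomy of $\widetilde{x}$ versus $\widetilde{t}^{3}$ (this is exactly the content of the paper's Lemmas 6.2--6.3, which it states without proof), and then read off the rates from the quadratic forms for $\Lambda_{+}(\beta)\pm\Lambda_{-}(\alpha)$. The only place you are lighter than the paper is the weak-solution bookkeeping: besides (5.3) one must also kill the boundary terms on $\partial D_{\epsilon}$ produced by integrating by parts over $\{t\geq 0\}\setminus D_{\epsilon}$ (the paper's $M_{2}$ and $M_{3}$), but these vanish precisely by the two facts you already establish, namely that $\rho\leq C|\widetilde{x}|^{-2/3}$ is integrable in $x$ uniformly in $t$ and that $\rho u=O(1)$.
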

Before proving Theorem 6.1, we need the following lemmas
\begin{lemma}
Under the assumptions (A1)-(A4), there is only one singular point, i.e., $(t_{0},x_{0})$ defined by (3.12) and (3.13), on which $\rho=\infty$ and away from $(t_{0},x_{0})$, $\rho(t,x)$ is finite.
\end{lemma}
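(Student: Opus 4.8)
The plan is to read the singularity locus directly off the density formula and then use the extremal structure forced by (A2)--(A4) to collapse it to a single point. By (4.21), in characteristic parameters
\[
\rho(t,x)=\frac{2\mu}{\Lambda_{+}(\beta)-\Lambda_{-}(\alpha)},
\]
so, since $\mu>0$, the density is finite wherever $\Lambda_{+}(\beta)-\Lambda_{-}(\alpha)\neq 0$ and $\rho=\infty$ precisely on the singular set $S=\{(\alpha,\beta)\mid \Lambda_{-}(\alpha)=\Lambda_{+}(\beta)\}$. Hence the lemma amounts to showing that, near the blowup, $S$ reduces to the single point $(\alpha_{0},\beta_{0})$, whose image under the mapping $\Pi$ of (3.14) is exactly $(t_{0},x_{0})$ given by (3.12)--(3.13).

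First I would record the local shape of $\Lambda_{\pm}$ at the distinguished points. By (6.3), $\alpha_{0}$ is a critical point of $\Lambda_{-}$ and $\beta_{0}$ is a critical point of $\Lambda_{+}$; by (6.4), $\Lambda_{-}''(\alpha_{0})<0$ and $\Lambda_{+}''(\beta_{0})>0$, so $\alpha_{0}$ is a strict local maximum of $\Lambda_{-}$ and $\beta_{0}$ a strict local minimum of $\Lambda_{+}$. Together with the normalization (6.2), i.e. $\Lambda_{-}(\alpha_{0})=\Lambda_{+}(\beta_{0})=0$, this yields a punctured neighborhood of $\alpha_{0}$ on which $\Lambda_{-}(\alpha)<0$ and a punctured neighborhood of $\beta_{0}$ on which $\Lambda_{+}(\beta)>0$.

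Next I would convert these sign conditions into uniqueness. Equivalently, the separable function $F(\alpha,\beta)\triangleq\Lambda_{-}(\alpha)-\Lambda_{+}(\beta)$ has a strict local maximum at $(\alpha_{0},\beta_{0})$ with $F(\alpha_{0},\beta_{0})=0$, because its Hessian there is $\mathrm{diag}(\Lambda_{-}''(\alpha_{0}),-\Lambda_{+}''(\beta_{0}))$, which is negative definite. Consequently $F<0$ throughout a punctured neighborhood of $(\alpha_{0},\beta_{0})$; that is, $\Lambda_{-}(\alpha)<\Lambda_{+}(\beta)$ and $\rho$ is finite there, while $F=0$ only at $(\alpha_{0},\beta_{0})$. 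Thus $(\alpha_{0},\beta_{0})$ is the unique singular point in parameter space. Pushing forward by $\Pi$ and evaluating $\Pi(\alpha_{0},\beta_{0})=(t_{0},x_{0})$ via Lemma 2.2 and (3.12)--(3.13) gives a single physical singular point, with $\rho$ finite away from it.

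The step requiring the most care is the transfer from the parameter plane to the $(t,x)$ plane. I must ensure that the isolated singular point of $S$ really produces an isolated singularity of $\rho(t,x)$, i.e. that no other branch of $S$ arising from the global behavior of $\Lambda_{\pm}$ away from $\alpha_{0},\beta_{0}$ can map into the same region, and that $\Pi$ is one-to-one (a local homeomorphism off the singular point) near the blowup, so that ``away from $(t_{0},x_{0})$'' in $(t,x)$ corresponds to ``away from $(\alpha_{0},\beta_{0})$'' in $(\alpha,\beta)$. Here I would invoke the global inequality (6.1), $\Lambda_{-}<\Lambda_{+}$, to exclude spurious intersections of $S$, together with the monotonicity and convexity information already developed for the mapping $\Pi$, to secure the one-to-one correspondence near the blowup point and thereby finish the proof.
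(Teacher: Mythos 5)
Your local analysis is correct and, if anything, more carefully justified than the paper's own: by (A2)--(A4) the function $F(\alpha,\beta)=\Lambda_{-}(\alpha)-\Lambda_{+}(\beta)$ has a strict local maximum equal to zero at $(\alpha_{0},\beta_{0})$ with negative definite Hessian $\mathrm{diag}(\Lambda_{-}''(\alpha_{0}),-\Lambda_{+}''(\beta_{0}))$, so $(\alpha_{0},\beta_{0})$ is an isolated zero of $\Lambda_{+}(\beta)-\Lambda_{-}(\alpha)$ and, by (4.21), the only singular parameter point \emph{near} $(\alpha_{0},\beta_{0})$. But the lemma is a global statement --- $\rho$ is finite at \emph{every} $(t,x)\neq(t_{0},x_{0})$ --- and your plan for the global step does not work. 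The pointwise inequality (6.1), $\Lambda_{-}(x)<\Lambda_{+}(x)$, constrains the two functions only at equal arguments and cannot exclude $\Lambda_{-}(\alpha)=\Lambda_{+}(\beta)$ for $\alpha\neq\beta$ with both arguments far from $(\alpha_{0},\beta_{0})$; indeed, under (H1)--(H5) of Section~3 exactly such equalities occur (the set $\Sigma$ of (3.5) is a whole curve of singular parameter pairs) even though (H1) is the very same inequality. Likewise, injectivity of $\Pi$ near the blowup point is beside the point: what must be ruled out is a second singular parameter pair anywhere in $\{\alpha\le\beta\}$, not a second preimage of $(t_{0},x_{0})$.

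The ingredient you are missing is the global sign information that the paper reads off Figure~6 and uses directly in its proof: $\alpha_{0}$ is the \emph{global} strict maximum of $\Lambda_{-}$ and $\beta_{0}$ the \emph{global} minimum of $\Lambda_{+}$, so that $\Lambda_{-}(\alpha)<0$ for every $\alpha\neq\alpha_{0}$ while $\Lambda_{+}(\beta)\ge 0$ for every $\beta$. Then $\Lambda_{-}(\alpha)=\Lambda_{+}(\beta)$ forces $\alpha=\alpha_{0}$ and, symmetrically, $\beta=\beta_{0}$, which is the paper's one-line contradiction. (To be fair, this global shape does not follow from the literal local statements (A2)--(A4) either; it is implicit in the intended class of data. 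But any complete proof must invoke it, and your proposal replaces it with (6.1), which is strictly weaker and insufficient.)
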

\begin{proof} By (3.12), (3.13), (4.21) and (6.2), we observe that at $(t_{0},x_{0})$, $\rho=\infty$.

Suppose that there exists another point $(t,x)\neq(t_{0},x_{0})$ such that $\rho(t,x)=\infty$. from $(t,x)$, there exist only two characteristics intersecting the $x$-axis at $\alpha$ and $\beta$ respectively. By (4.21) we have
$$
\Lambda_{-}(\alpha)=\Lambda_{+}(\beta).
\eqno(6.15)
 $$
If $\alpha\neq\alpha_{0}$, then by the assumptions (A1)-(A4) we have

$$\Lambda_{-}(\alpha)<0,\quad \Lambda_{+}(\beta)\geq0,\quad \forall \:\beta\in \mathbb{R}.$$
This contradicts to (6.15).

Similarly, it is easy to show that the assumption $\beta\neq\beta_{0}$ also leads to a contradiction.
Thus, the lemma is proved.
\end{proof}
Under the assumptions (A1)-(A4), the characteristics can be depicted as follows: the characteristics $x^{-}$ and $x^{+}$ passing through $(0,\alpha_{0})$ and $(0,\beta_{0})$ respectively tangent at $(t_{0},x_{0})$ and then they turn away from each other (see Figure 7).
\begin{figure}[h]
\centering
\includegraphics[trim=0 0 0 0, scale=0.80]{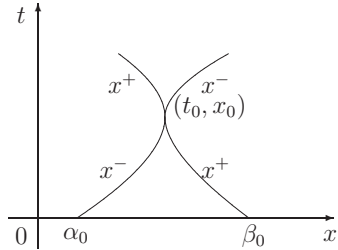}
\caption{The characteristics under assumptions (A1)-(A4).}
\end{figure}

By the same method as Lemmas 4.1-4.3, we get the following two lemmas without proof.
\begin{lemma}
Under the assumptions (A1)-(A4), in the neighborhood of $(t_{0},x_{0})$, it holds that
\begin{eqnarray}
\widetilde{\beta}&=&\left(\frac{1}{2}(-B_{7}\widetilde{x}-B_{8}\widetilde{t}^{3})+\sqrt{\frac{1}{4}(-B_{7}\widetilde{x}-B_{8}\widetilde{t}^{3})^{2}+\frac{1}{27}(B_{9}\widetilde{t}^{2})^{3}}\right)^{\frac{1}{3}}\nonumber\\
& &+\left(\frac{1}{2}(-B_{7}\widetilde{x}-B_{8}\widetilde{t}^{3})-\sqrt{\frac{1}{4}(-B_{7}\widetilde{x}-B_{8}\widetilde{t}^{3})^{2}+\frac{1}{27}(B_{9}\widetilde{t}^{2})^{3}}\right)^{\frac{1}{3}}
-\frac{B_{10}\widetilde{t}}{3},\nonumber
\end{eqnarray}
where $B_{i}$ $(i=7,\cdots,10)$ are constants depending only on the initial data at $(\alpha_{0},\beta_{0})$.
\end{lemma}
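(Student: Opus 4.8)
The plan is to reproduce, step by step, the argument of Lemma 4.1, now feeding the degenerate data (A1)--(A4) into the Taylor expansions (4.1) and (4.2). First I would specialize the coefficients there using $\Lambda_{+}(\beta_{0})=\Lambda_{-}(\alpha_{0})=0$ from (A2) together with $\Lambda_{+}'(\beta_{0})=\Lambda_{-}'(\alpha_{0})=0$ from (A3). In the expansion of $\widetilde{t}$ the two linear coefficients survive, giving $\widetilde{t}=-\widetilde{\beta}/\Lambda_{-}(\beta_{0})-\widetilde{\alpha}/\Lambda_{+}(\alpha_{0})+O(\widetilde{\alpha}^{2}+\widetilde{\beta}^{2})$, both leading coefficients being nonzero because (A1) forces $\Lambda_{-}(\beta_{0})<0$ and $\Lambda_{+}(\alpha_{0})>0$. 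In the expansion of $\widetilde{x}$, however, one checks that \emph{both} the linear coefficients and \emph{both} the quadratic coefficients vanish identically under (A2)--(A3), so that $\widetilde{x}$ becomes cubic at leading order, $\widetilde{x}=p_{3}\widetilde{\beta}^{3}+q_{3}\widetilde{\alpha}^{3}+\cdots$, with $p_{3}=-\Lambda_{+}''(\beta_{0})/(6\Lambda_{-}(\beta_{0}))\neq 0$ and $q_{3}\neq 0$ by (A4). This passage from a quadratic leading term (compare (4.9)) to a genuinely cubic one is the structural source of the point-shape, rather than cusp-type, singularity.

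Next I would eliminate $\widetilde{\alpha}$ in favour of $(\widetilde{t},\widetilde{\beta})$ by solving the $\widetilde{t}$-relation iteratively, which gives $\widetilde{\alpha}=-\Lambda_{+}(\alpha_{0})\widetilde{t}-\frac{\Lambda_{+}(\alpha_{0})}{\Lambda_{-}(\beta_{0})}\widetilde{\beta}+\cdots$, exactly the iterative device used to reach (4.9). Substituting this into the cubic relation for $\widetilde{x}$ and collecting powers of $\widetilde{\beta}$, the term $q_{3}\widetilde{\alpha}^{3}$ contributes a $\widetilde{t}\,\widetilde{\beta}^{2}$, a $\widetilde{t}^{2}\widetilde{\beta}$ and a $\widetilde{t}^{3}$ part, which combine with $p_{3}\widetilde{\beta}^{3}$. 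After dividing by the nonzero coefficient of $\widetilde{\beta}^{3}$ I would arrive at a single cubic
$$\widetilde{\beta}^{3}+B_{10}\widetilde{t}\,\widetilde{\beta}^{2}+(\cdots)\widetilde{t}^{2}\widetilde{\beta}+(\cdots)\widetilde{t}^{3}+B_{7}\widetilde{x}=0,$$
all of whose coefficients depend only on the data at $(\alpha_{0},\beta_{0})$, in complete analogy with (4.9)--(4.11).

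I would then depress this cubic by the shift $\widetilde{\beta}=z-\tfrac{1}{3}B_{10}\widetilde{t}$, which removes the $\widetilde{\beta}^{2}$ term and reduces the equation to the standard form $z^{3}+B_{9}\widetilde{t}^{2}z+(B_{7}\widetilde{x}+B_{8}\widetilde{t}^{3})=0$; here $B_{9}$ and $B_{8}$ arise from the usual reshuffling of the $\widetilde{t}$-powers under the shift. Applying Cardano's formula to this depressed cubic expresses $z$ as the sum of the two cube roots with radicand $\tfrac14(B_{7}\widetilde{x}+B_{8}\widetilde{t}^{3})^{2}+\tfrac{1}{27}(B_{9}\widetilde{t}^{2})^{3}$, and undoing the shift $\widetilde{\beta}=z-\tfrac13 B_{10}\widetilde{t}$ reproduces verbatim the formula in the statement (note that $(B_{7}\widetilde{x}+B_{8}\widetilde{t}^{3})^{2}=(-B_{7}\widetilde{x}-B_{8}\widetilde{t}^{3})^{2}$).

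The main obstacle is purely computational bookkeeping rather than any new idea: one must carry the iteration to cubic order with enough care to confirm that the coefficient of $\widetilde{\beta}^{2}$ is exactly $B_{10}\widetilde{t}$ (so that the announced shift is the correct one) and that, after depression, the linear and constant coefficients collapse precisely to $B_{9}\widetilde{t}^{2}$ and $B_{7}\widetilde{x}+B_{8}\widetilde{t}^{3}$, with all lower-order corrections swept into higher-order remainders. This mirrors exactly the verification of (4.9)--(4.11); the nonvanishing of the leading cube coefficient and of $B_{9}$, which legitimizes both the division and the real Cardano branch, follows from (A1) and (A4) in the same way that (4.10)--(4.11) followed from (3.11).
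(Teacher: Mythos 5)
Your proposal is correct and is exactly the argument the paper intends: the paper states this lemma without proof, remarking only that it follows ``by the same method as Lemmas 4.1--4.3,'' and your reconstruction --- specializing (4.1)--(4.2) to the degenerate data (A2)--(A3) so that the linear and quadratic terms of $\widetilde{x}$ drop out and it becomes cubic at leading order, eliminating $\widetilde{\alpha}$ via the linear part of the $\widetilde{t}$-relation, and solving the resulting full cubic in $\widetilde{\beta}$ by the Tschirnhaus shift $\widetilde{\beta}=z-\tfrac{1}{3}B_{10}\widetilde{t}$ followed by Cardano --- is the natural way to carry that out. The only imprecision is your attribution of the nonvanishing of $q_{3}$ (and hence of the leading $\widetilde{\beta}^{3}$-coefficient after substitution) to (A4): that coefficient also involves $\Lambda_{+}''(\alpha_{0})$, which (A1)--(A4) do not constrain, so strictly speaking this nondegeneracy is an implicit genericity assumption in the paper's statement rather than a consequence of the listed hypotheses.
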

\begin{lemma}
Under the assumptions (A1)-(A4), in the neighborhood of $(t_{0},x_{0})$, it holds that
\begin{displaymath}
\widetilde{\beta} = \left\{\begin{array}{ll}
B_{11}\widetilde{t},&\textrm{$\widetilde{x}=o(\widetilde{t}^{3})$},\\
B_{12}\widetilde{x}^{\frac{1}{3}},&\textrm{$\widetilde{t}^{3}=o(\widetilde{x})$},\\
B_{13}\widetilde{x}^{\frac{1}{3}},&\textrm{$\widetilde{x}=O(1)\widetilde{t}^{3}$},
\end{array}\right.
\end{displaymath}
\begin{displaymath}
\Lambda_{+}(\beta)-\Lambda_{-}(\alpha) = \left\{\begin{array}{ll}
B_{14}\widetilde{t}^{2},&\textrm{$\widetilde{x}=o(\widetilde{t}^{3})$},\\
B_{15}\widetilde{x}^{\frac{2}{3}},&\textrm{$\widetilde{t}^{3}=o(\widetilde{x})$},\\
B_{16}\widetilde{x}^{\frac{2}{3}},&\textrm{$\widetilde{x}=O(1)\widetilde{t}^{3}$}
\end{array}\right.
\end{displaymath}
and
\begin{displaymath}
\Lambda_{+}(\beta)+\Lambda_{-}(\alpha) = \left\{\begin{array}{ll}
B_{17}\widetilde{t}^{2},&\textrm{$\widetilde{x}=o(\widetilde{t}^{3})$},\\
B_{18}\widetilde{x}^{\frac{2}{3}},&\textrm{$\widetilde{t}^{3}=o(\widetilde{x})$},\\
B_{19}\widetilde{x}^{\frac{2}{3}},&\textrm{$\widetilde{x}=O(1)\widetilde{t}^{3}$}
\end{array}\right.
\end{displaymath}
where $B_{i}$ $(i=11,\cdots,19)$ are constants depending only on the initial data at $(\alpha_{0},\beta_{0})$.
\end{lemma}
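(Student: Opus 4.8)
The plan is to take Lemma 6.2 --- the explicit Cardano expression for $\widetilde{\beta}$ in terms of $\widetilde{t}$ and $\widetilde{x}$ --- as given, and to read off the three lines of the statement from it, exactly as Lemma 4.3 was deduced from Lemma 4.1. The only structural change is that assumption (A3) forces $\Lambda_{-}^{'}(\alpha_{0})=\Lambda_{+}^{'}(\beta_{0})=0$, which raises the leading orders in the expansions (4.1)-(4.2). Specializing those to (A2)-(A4) and writing $a=\Lambda_{+}(\alpha_{0})>0$, $b=\Lambda_{-}(\beta_{0})<0$ (the signs being forced by (A1)-(A2)), one finds that $\widetilde{t}$ is linear while $\widetilde{x}$ is purely cubic at leading order:
$$\widetilde{t}=-\frac{1}{a}\widetilde{\alpha}-\frac{1}{b}\widetilde{\beta}+O(\widetilde{\alpha}^{2},\widetilde{\beta}^{2}),\qquad \widetilde{x}=-\frac{\Lambda_{-}^{''}(\alpha_{0})}{6a}\widetilde{\alpha}^{3}-\frac{\Lambda_{+}^{''}(\beta_{0})}{6b}\widetilde{\beta}^{3}+\cdots.$$
Inverting the first relation gives the companion identity $\widetilde{\alpha}=-a\widetilde{t}-\tfrac{a}{b}\widetilde{\beta}+O(\widetilde{t}^{2},\widetilde{\beta}^{2})$, which I will use to transfer every $\widetilde{\beta}$-asymptotic into an $\widetilde{\alpha}$-asymptotic. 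Eliminating $\widetilde{\alpha}$ turns $\widetilde{x}$ into a depressed cubic in $\widetilde{\beta}$ with $\widetilde{t}$-dependent coefficients, and Lemma 6.2 is Cardano's root of it, with $A=\tfrac12(-B_{7}\widetilde{x}-B_{8}\widetilde{t}^{3})$ and $B=\tfrac{1}{27}(B_{9}\widetilde{t}^{2})^{3}$.

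First I would prove the $\widetilde{\beta}$-line by Taylor-expanding the Cardano formula, distinguishing the size of $A^{2}$ against $B$; since $A\sim\widetilde{x}+\widetilde{t}^{3}$ and $B\sim\widetilde{t}^{6}$, the split is governed by $\widetilde{x}$ versus $\widetilde{t}^{3}$. When $\widetilde{t}^{3}=o(\widetilde{x})$ one has $B=o(A^{2})$, the two radicals collapse to $(2A)^{1/3}\sim\widetilde{x}^{1/3}$ and the shift $-\tfrac{B_{10}}{3}\widetilde{t}$ is negligible, giving $\widetilde{\beta}=B_{12}\widetilde{x}^{1/3}$ (the analogue of Case II of Lemma 4.2). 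When $\widetilde{x}=o(\widetilde{t}^{3})$ the part $A\approx-\tfrac12 B_{8}\widetilde{t}^{3}$ is comparable to $B$; both radicals and the shift scale like $\widetilde{t}$ and combine to $\widetilde{\beta}=B_{11}\widetilde{t}$, with $B_{11}$ the real root of the reduced cubic obtained by dropping $\widetilde{x}$. The borderline $\widetilde{x}=O(1)\widetilde{t}^{3}$ is the analogue of Case III: everything still scales like $\widetilde{t}\sim\widetilde{x}^{1/3}$, but the leading constant $B_{13}$ now depends on the bounded ratio $\widetilde{x}/\widetilde{t}^{3}$, which is the source of its $O(1)$-dependence.

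Next, the companion identity gives $\widetilde{\alpha}\sim\widetilde{t}$ when $\widetilde{x}=o(\widetilde{t}^{3})$ and $\widetilde{\alpha}\sim\widetilde{x}^{1/3}$ in the other two regimes. Substituting $\widetilde{\alpha}$ and $\widetilde{\beta}$ into the quadratic-leading expansions
$$\Lambda_{+}(\beta)=\tfrac12\Lambda_{+}^{''}(\beta_{0})\widetilde{\beta}^{2}+O(\widetilde{\beta}^{3}),\qquad \Lambda_{-}(\alpha)=\tfrac12\Lambda_{-}^{''}(\alpha_{0})\widetilde{\alpha}^{2}+O(\widetilde{\alpha}^{3}),$$
(again a consequence of (A2)-(A3)) produces the remaining two lines. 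For the difference $\Lambda_{+}(\beta)-\Lambda_{-}(\alpha)=\tfrac12\Lambda_{+}^{''}(\beta_{0})\widetilde{\beta}^{2}-\tfrac12\Lambda_{-}^{''}(\alpha_{0})\widetilde{\alpha}^{2}$ both pieces are positive by (A4), so no cancellation occurs and one gets the powers $\widetilde{t}^{2},\widetilde{x}^{2/3},\widetilde{x}^{2/3}$ with strictly positive $B_{14},B_{15},B_{16}$; this incidentally re-proves Lemma 6.1 locally, the combination vanishing only at the cusp. The sum $\Lambda_{+}(\beta)+\Lambda_{-}(\alpha)$ carries the same powers under the same substitution.

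The main obstacle is the non-degeneracy of the leading constants of the last line, i.e. $B_{17},B_{18},B_{19}\neq0$. Here the sum $\tfrac12\Lambda_{+}^{''}(\beta_{0})\widetilde{\beta}^{2}+\tfrac12\Lambda_{-}^{''}(\alpha_{0})\widetilde{\alpha}^{2}$ pairs the positive curvature $\Lambda_{+}^{''}(\beta_{0})$ against the negative one $\Lambda_{-}^{''}(\alpha_{0})$, so writing $r=\widetilde{\alpha}/\widetilde{\beta}$ the coefficient is proportional to $\Lambda_{+}^{''}(\beta_{0})+\Lambda_{-}^{''}(\alpha_{0})r^{2}$, which \emph{can} vanish for special data when $r^{2}=-\Lambda_{+}^{''}(\beta_{0})/\Lambda_{-}^{''}(\alpha_{0})$. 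I would close this by inserting the explicit value of $r$ in each regime (namely $-a/b$ when $\widetilde{x}$ dominates, and the root $B_{11}$ of the reduced cubic when $\widetilde{t}$ dominates) and checking the resulting inequality for generic initial data, in the spirit of Remark 4.4; this is precisely where assumption (A4) (definite, opposite-signed curvatures) is indispensable. A secondary check is that the radicand $A^{2}+B$ stays positive on a full punctured neighbourhood, so that Cardano's formula selects a single real root --- this is what makes the singularity point-shaped rather than line-shaped and follows from the signs of $B_{8},B_{9}$ fixed by (A4).
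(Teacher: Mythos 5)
Your proposal is correct and follows exactly the route the paper itself prescribes: the paper states this lemma without proof, saying only that it follows ``by the same method as Lemmas 4.1--4.3'', i.e., specialize the expansions (4.1)--(4.2) to (A2)--(A4) (so $\widetilde{t}$ is linear and $\widetilde{x}$ cubic in the characteristic increments), eliminate $\widetilde{\alpha}$, Taylor-expand the Cardano root of Lemma 6.2 in the three regimes, and substitute into the second-order expansions of $\Lambda_{\pm}$ --- which is precisely what you do, including the correct observation that the regime split is now governed by $\widetilde{x}$ versus $\widetilde{t}^{3}$ because $A$ itself carries a $\widetilde{t}^{3}$ term, and the sign argument showing no cancellation in the difference $\Lambda_{+}(\beta)-\Lambda_{-}(\alpha)$. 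Your two caveats are minor and do not change the verdict: the exact $\widetilde{\alpha}^{3}$-coefficient of $\widetilde{x}$ is $-\bigl(2\Lambda^{''}_{-}(\alpha_{0})-\Lambda^{''}_{+}(\alpha_{0})\bigr)/\bigl(6\Lambda_{+}(\alpha_{0})\bigr)$ rather than $-\Lambda^{''}_{-}(\alpha_{0})/\bigl(6\Lambda_{+}(\alpha_{0})\bigr)$ (compare (4.1) with the computation in Lemma 6.6), which affects only the constants and not the scalings, and the possible vanishing of $B_{17}$--$B_{19}$ in the sum that you honestly flag is handled no more rigorously by the paper itself (cf.\ Remark 4.4), so your genericity resolution is consistent with the paper's stance.
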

\textbf{The proof of Theorem 6.1.}
 The behavior of $\rho$ and $u$ can be derived easily from the above two lemmas. By the same method as Theorem 5.1, in the integral domain $D_{\epsilon}$ defined below, we have $|\rho|\leq |\widetilde{x}|^{-\frac{2}{3}}$ and the orders of $|\rho u|$ and $|\rho u^{2}|$ are higher than the order of $|\rho|$, thus, (5.3) holds obviously. Next, we prove that system (2.1) satisfies the Definition 5.2. First we consider the first equation of (2.1).

 Define
 $$
 D_{\epsilon}=\{(t,x)|\:|t-t_{0}|\leq\epsilon,\;|x-x_{0}|\leq\epsilon\}.
 $$

 For arbitrary $\phi(t,x)\in C_{0}^{1}$, it holds that
  \begin{eqnarray}
& &\iint\limits_{t\geq0} (\rho\phi_{t}+\rho u\phi_{x}) dxdt+\int_{t=0}\rho_{0}\phi dx\nonumber\\
&=&\iint\limits_{\{t\geq0\}-D_{\epsilon}} (\rho\phi_{t}+\rho u\phi_{x}) dxdt+\int_{t=0}\rho_{0}\phi+
\iint\limits_{D_{\epsilon}} (\rho\phi_{t}+\rho u\phi_{x}) dxdt\nonumber\\
&=&\left(\int_{0}^{\infty}\int_{-\infty}^{x_{0}-\epsilon}+\int_{0}^{\infty}\int_{x_{0}+\epsilon}^{+\infty}
+\int_{0}^{t_{0}-\epsilon}\int_{x_{0}-\epsilon}^{x_{0}+\epsilon}+
\int_{t_{0}+\epsilon}^{\infty}\int_{x_{0}-\epsilon}^{x_{0}+\epsilon}\right)(\rho\phi_{t}+\rho u\phi_{x}) dxdt\nonumber\\
& &
+
\iint\limits_{D_{\epsilon}} (\rho\phi_{t}+\rho u\phi_{x}) dxdt+\int_{t=0}\rho_{0}\phi dx\nonumber\\
&=&\left(\int_{0}^{\infty}\int_{-\infty}^{x_{0}-\epsilon}+\int_{0}^{\infty}\int_{x_{0}+\epsilon}^{+\infty}
+\int_{0}^{t_{0}-\epsilon}\int_{x_{0}-\epsilon}^{x_{0}+\epsilon}+
\int_{t_{0}+\epsilon}^{\infty}\int_{x_{0}-\epsilon}^{x_{0}+\epsilon}\right)(\rho_{t}\phi+(\rho u)_{x}\phi) dxdt \nonumber\\
& &-\int_{-\infty}^{+\infty}\rho_{0}\phi dx+\int_{x_{0}-\epsilon}^{x_{0}+\epsilon}(\rho(t_{0}-\epsilon)\phi(t_{0}-\epsilon)-\rho(t_{0}+\epsilon)\phi(t_{0}+\epsilon))dx
+
\iint\limits_{D_{\epsilon}} (\rho\phi_{t}+\rho u\phi_{x}) dxdt\nonumber\\& &+\int_{t_{0}-\epsilon}^{t_{0}+\epsilon}(\rho u\phi(x_{0}-\epsilon)-\rho u\phi(x_{0}+\epsilon))dt+\int_{-\infty}^{+\infty}\rho_{0}\phi dx
\nonumber\\
&=&\left(\int_{0}^{\infty}\int_{-\infty}^{x_{0}-\epsilon}+\int_{0}^{\infty}\int_{x_{0}+\epsilon}^{+\infty}
+\int_{0}^{t_{0}-\epsilon}\int_{x_{0}-\epsilon}^{x_{0}+\epsilon}+
\int_{t_{0}+\epsilon}^{\infty}\int_{x_{0}-\epsilon}^{x_{0}+\epsilon}\right)(\rho_{t}+(\rho u)_{x})\phi dxdt \nonumber\\
& &+\int_{x_{0}-\epsilon}^{x_{0}+\epsilon}(\rho\phi(t_{0}-\epsilon)-\rho\phi(t_{0}+\epsilon))dx+\int_{t_{0}-\epsilon}^{t_{0}+\epsilon}(\rho u\phi(t_{0}-\epsilon)-\rho u\phi(t_{0}+\epsilon))dt\nonumber\\
& &+
\iint\limits_{D_{\epsilon}} (\rho\phi_{t}+\rho u\phi_{x}) dxdt\nonumber\\
&\triangleq&M_{1}+M_{2}+M_{3}+M_{4}.\nonumber
\end{eqnarray}
In the limit $\epsilon\rightarrow0$, $M_{1}\rightarrow0$ due to the first equation of (2.1), $M_{4}\rightarrow0$ due to (5.3), for $M_{2}$, we have\\
$$
|M_{2}|\leq2max|\phi|\int_{x_{0}-\epsilon}^{x_{0}+\epsilon}|\rho|dx\leq2max|\phi|\int_{x_{0}-\epsilon}^{x_{0}+\epsilon}|x-x_{0}|^{-\frac{2}{3}}dx\leq12max|\phi|\epsilon^{\frac{1}{3}},
$$
where $$max|\phi|=sup\:\{|\phi(t,x)|\mid (t,x)\in D_{\epsilon}\}.$$
So $M_{2}$ tends to zero. For $M_{3}$, since $|\rho u \phi|$ is finite on $(t_{0}-\epsilon,t_{0}+\epsilon)$. Thus, $M_{3}\rightarrow0$ as $\epsilon\rightarrow0$.
Similarly, for the second equation of (2.1) we have
$$
\iint\limits_{t\geq0} (\rho u\phi_{t}+(\rho u^{2}+p)\phi_{x}) dxdt+\int_{t=0}\rho_{0}u_{0}\phi dx=0.
$$
\textbf{6.2. Delta-like solution with line-shape singularity: Type I.}

In this subsection, we are going to investigate another important Delta-like solution named Delta-like solution with line-shape singularity. To do so, we assume that

Assumption ($B1$):
$$
\Lambda_{-}(x)<\Lambda_{+}(x), \quad \forall\: x\in \mathbb{R};
\eqno(6.5)
$$
Furthermore, there exist $\alpha_{0}$ and $\beta_{0}$ with $\alpha_{0}<\beta_{0}$ satisfying

Assumption ($B2$):
 $$
\Lambda_{-}(\alpha_{0})=\Lambda_{+}(\beta)=0,\quad \forall \beta_{0}\leq\beta\leq\widehat{\beta};
\eqno(6.6)
$$
$$(\text{resp.}
\quad\Lambda_{-}(\alpha)=\Lambda_{+}(\beta_{0})=0,\quad \forall\: \widehat{\alpha}\leq\alpha\:\leq\alpha_{0};)$$

Assumption ($B3$): $$
\Lambda^{'}_{-}(\alpha_{0})=\Lambda^{'}_{+}(\beta_{0})=0;
\eqno(6.7)
$$

Assumption ($B4$): $$
\Lambda^{''}_{-}(\alpha_{0})<0,\quad \Lambda^{''}_{+}(\beta_{0})=0;
\eqno(6.8)
$$
$$(\text{resp}.\quad\Lambda^{''}_{-}(\alpha_{0})=0,\quad \Lambda^{''}_{+}(\beta_{0})>0.)$$
$\Lambda_{\pm}(x)$ satisfying assumptions ($B1$)-($B4$) are shown in Figure 8.
\begin{figure}[h]
\centering
\includegraphics[trim=0 0 0 0,scale=0.80]{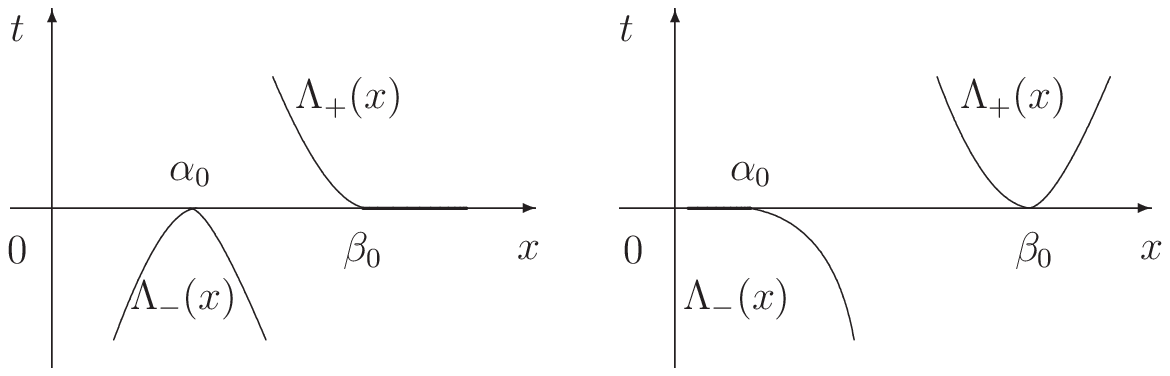}
\caption{$\Lambda_{\pm}(x)$ satisfying the assumptions $(B1)$-$(B4)$.}
\end{figure}

Define
$$
\widehat{t}=\int_{\alpha_{0}}^{\widehat{\beta}} \frac{1}{\Lambda_{+}(\zeta)-\Lambda_{-}(\zeta)}d\zeta.
$$
Under the assumptions $(B1)$-$(B4)$, we get the Delta-like solution with line-shape singularity
\begin{theorem}
Under the assumptions ($B1$)-($B4$), it holds that
$$
\lim\limits_{\epsilon\rightarrow0}\int_{t_{0}-\epsilon}^{\widehat{t}+\epsilon}\int_{x_{0}-\epsilon}^{x_{0}+\epsilon}(\rho\phi_{t}
+\rho u\phi_{x})dxdt=0
\eqno(6.9)
$$
and
$$
\lim\limits_{\epsilon\rightarrow0}\int_{t_{0}-\epsilon}^{\widehat{t}+\epsilon}\int_{x_{0}-\epsilon}^{x_{0}+\epsilon}(\rho u\phi_{t}
+(\rho u^{2}+p)\phi_{x})dxdt=0.
\eqno(6.10)
$$
Furthermore, the solution derived by the method of characteristics is a Delta-like solution and we call it Delta-like solution with line-shape singularity.
\end{theorem}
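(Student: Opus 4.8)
The plan is to adapt the arguments behind Theorem 5.1 and Theorem 6.1, the essential new feature being that the singular locus is now a segment rather than an isolated point. First I would pin down this locus. Under $(B2)$ the Riemann invariant $\Lambda_{+}$ vanishes identically on $[\beta_{0},\widehat{\beta}]$ while $\Lambda_{-}(\alpha_{0})=0$; hence along $\alpha=\alpha_{0}$, $\beta\in[\beta_{0},\widehat{\beta}]$, formulas (2.10)--(2.11) give $\frac{dt}{d\beta}=\frac{1}{\Lambda_{+}(\beta)-\Lambda_{-}(\beta)}>0$ and $\frac{dx}{d\beta}=\frac{\Lambda_{+}(\beta)}{\Lambda_{+}(\beta)-\Lambda_{-}(\beta)}=0$, so that $\Pi(\{\alpha_{0}\}\times[\beta_{0},\widehat{\beta}])$ is precisely the vertical segment $\{x=x_{0},\ t_{0}\leq t\leq\widehat{t}\}$, on which $\Lambda_{+}(\beta)-\Lambda_{-}(\alpha)=0$ and thus $\rho=\infty$ by (4.21). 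Arguing exactly as in Lemma 6.1, I would verify that this segment is the entire singular set, which is what makes the singularity line-shaped.

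Next I would obtain the transverse asymptotics of $\rho$ and $u$ along the segment by the expansion and inversion scheme of Lemmas 4.1--4.3 and 6.2--6.3. At an interior base point $(\alpha_{0},\beta^{\ast})$ with $\beta^{\ast}\in(\beta_{0},\widehat{\beta})$ one has $\Lambda_{+}\equiv0$ nearby and $\Lambda_{-}^{'}(\alpha_{0})=0$, so $\Lambda_{+}(\beta)-\Lambda_{-}(\alpha)=-\Lambda_{-}(\alpha)=-\tfrac12\Lambda_{-}^{''}(\alpha_{0})\widetilde{\alpha}^{2}+O(\widetilde{\alpha}^{3})>0$, while the Taylor expansions of $t$ and $x$ make $\widetilde{t}$ regular (linear) in $(\widetilde{\alpha},\widetilde{\beta})$ and give $\widetilde{x}\approx\widetilde{\alpha}^{3}$. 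Inverting yields $\widetilde{\alpha}\approx\widetilde{x}^{\frac13}$ and hence the transverse rate $|\rho|\approx|\widetilde{x}|^{-\frac23}$, together with $u=\tfrac12\Lambda_{-}(\alpha)\approx\widetilde{x}^{\frac23}\to0$ on the segment, so that $\rho u$ stays bounded and $\rho u^{2}$ vanishes. At the two endpoints $(t_{0},x_{0})$ and $(\widehat{t},x_{0})$ a boundary version of the same expansion applies, and the transverse blow-up of $\rho$ there is again of integrable order.

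With these bounds I would establish (6.9)--(6.10) by direct estimation, exactly as in Theorem 5.1. Since $\phi\in C_{0}^{1}$, the derivatives $\phi_{t},\phi_{x}$ are bounded and the two integrals are dominated by $\max|\phi_{t}|\iint|\rho|\,dx\,dt+\max|\phi_{x}|\iint|\rho u|\,dx\,dt$ over the shrinking rectangle $[t_{0}-\epsilon,\widehat{t}+\epsilon]\times[x_{0}-\epsilon,x_{0}+\epsilon]$. Only the $\rho\phi_{t}$ term is genuinely singular; because the exponent $-\tfrac23>-1$, the bound $|\rho|\leq C|\widetilde{x}|^{-\frac23}$ gives $\int_{x_{0}-\epsilon}^{x_{0}+\epsilon}|\rho|\,dx=O(\epsilon^{\frac13})$, whence $\iint|\rho|\,dx\,dt=O(\epsilon^{\frac13})\to0$, while the bounded $\rho u$ term is even easier; for (6.10) both $\rho u$ and $\rho u^{2}+p$ remain bounded near the segment, so that limit is immediate. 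Finally, to conclude that the characteristic solution is Delta-like in the sense of Definition 5.2, I would repeat the decomposition used in the proof of Theorem 6.1, splitting $\iint_{t\geq0}$ into the region off the rectangle, where the classical equations hold, the boundary terms, which vanish because $\rho$ is integrable and $\rho u\phi$ is finite off the segment, and the singular part over the rectangle, which vanishes by (6.9)--(6.10).

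The step I expect to be the main obstacle is the interior transverse analysis: in contrast with the point-shape case there is no single cusp to expand about, so one must carry out the expansion and local inversion around the whole one-parameter family of base points $(\alpha_{0},\beta^{\ast})$, $\beta^{\ast}\in(\beta_{0},\widehat{\beta})$, show that the rate $|\rho|\approx|\widetilde{x}|^{-\frac23}$ holds with constants that remain uniform along the segment, and match this interior behavior to the more degenerate endpoint expansions (where $\Lambda_{+}^{''}(\beta_{0})=0$) so that the single integrability bound $|\rho|\leq C|\widetilde{x}|^{-\frac23}$ is valid throughout a full neighborhood of the segment.
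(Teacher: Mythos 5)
Your proposal follows essentially the same route as the paper: it identifies the singular segment as the image $\Pi(\{\alpha_{0}\}\times[\beta_{0},\widehat{\beta}])$ exactly as in Lemma 6.4, derives the transverse rates $\widetilde{\alpha}\approx\widetilde{x}^{1/3}$, $|\rho|\approx|\widetilde{x}|^{-2/3}$, $\rho u$ bounded as in Lemma 6.5 (the paper's expansion in $\widetilde{\alpha}$ alone already gives the uniformity along the segment you flag as a concern, since $\Lambda_{+}\equiv 0$ on $[\beta_{0},\widehat{\beta}]$ kills all $\widetilde{\beta}$-dependence in $\widetilde{x}$ and in $\Lambda_{+}(\beta)-\Lambda_{-}(\alpha)$), and then concludes (6.9)--(6.10) by the same integrability estimate and the Delta-like property by the same decomposition as in Theorem 6.1. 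The argument is correct and matches the paper's proof.
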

Before proving Theorem 6.2, we need the following lemmas.
\begin{lemma}
Under the assumptions ($B1$)-($B4$), the singularities form a line $L\triangleq\{(t,x)\mid x=x_{0},t_{0}\leq t\leq \widehat{t} \}$, and on this line, it holds that $\rho=\infty$, while off the line, $\rho$ is finite and smooth, we denote this set by $L^{c}$.
\end{lemma}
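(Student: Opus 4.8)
The plan is to characterize the set $\{\rho=\infty\}$ through the blowup condition $\Lambda_{+}(\beta)=\Lambda_{-}(\alpha)$ and then transport it to the $(t,x)$-plane via the map $\Pi$. Recall from (4.21) that $\rho=2\mu/(\Lambda_{+}(\beta)-\Lambda_{-}(\alpha))$, so that $\rho=\infty$ at a point precisely when its characteristic parameters satisfy $\Lambda_{-}(\alpha)=\Lambda_{+}(\beta)$; this is also exactly the locus where the Jacobian $J$ vanishes, as noted after Definition 3.2.

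First I would locate the singular set in the $(\alpha,\beta)$-plane, arguing as in Lemma 6.1. By (B3)--(B4) the point $\alpha_{0}$ is a strict local maximum of $\Lambda_{-}$ with $\Lambda_{-}(\alpha_{0})=0$, so (using the global shape of $\Lambda_{-}$ encoded in Figure 8) $\Lambda_{-}(\alpha)\leq 0$ with equality only at $\alpha_{0}$; by (B2) one has $\Lambda_{+}(\beta)=0$ on the whole interval $[\beta_{0},\widehat{\beta}]$ and $\Lambda_{+}(\beta)>0$ outside it. Hence $\Lambda_{-}(\alpha)=\Lambda_{+}(\beta)$ forces both sides to vanish, which happens if and only if $\alpha=\alpha_{0}$ and $\beta\in[\beta_{0},\widehat{\beta}]$; the alternatives $\alpha\neq\alpha_{0}$ or $\beta\notin[\beta_{0},\widehat{\beta}]$ are ruled out by the sign contradiction used in Lemma 6.1. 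Thus the singular set in parameter space is exactly the segment $S=\{\alpha_{0}\}\times[\beta_{0},\widehat{\beta}]$.

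Next I would compute $\Pi(S)$. Along $S$ the time coordinate is $t(\alpha_{0},\beta)=\int_{\alpha_{0}}^{\beta}(\Lambda_{+}-\Lambda_{-})^{-1}\,d\zeta$, whose integrand is positive by (B1), so $t$ is strictly increasing in $\beta$ and runs from $t(\alpha_{0},\beta_{0})=t_{0}$ to $t(\alpha_{0},\widehat{\beta})=\widehat{t}$, sweeping the whole interval $[t_{0},\widehat{t}]$. For the space coordinate, differentiating (2.11) gives $\partial x/\partial\beta=\Lambda_{+}(\beta)/(\Lambda_{+}(\beta)-\Lambda_{-}(\beta))$, and since $\Lambda_{+}(\beta)=0$ on $[\beta_{0},\widehat{\beta}]$ this derivative vanishes identically there; hence $x(\alpha_{0},\beta)\equiv x(\alpha_{0},\beta_{0})=x_{0}$ along $S$, and $\Pi(S)=\{(t,x_{0})\mid t_{0}\leq t\leq\widehat{t}\}=L$. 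This is the heart of the argument: the flatness of $\Lambda_{+}$ imposed by (B2) is exactly what collapses the $\beta$-direction onto a single vertical line, producing a line-shaped rather than point-shaped singularity. Consequently $\rho=\infty$ on $L$.

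Finally, for the behavior off $L$: a point lies in $L^{c}$ iff its parameters avoid $S$, so there $\Lambda_{+}(\beta)-\Lambda_{-}(\alpha)\neq 0$ and $\rho$ is finite; moreover such a point is a regular point of $\Pi$ (since $J\neq 0$ away from the singular set), so $\Pi$ is a local diffeomorphism and $\rho=2\mu/(\Lambda_{+}(\beta)-\Lambda_{-}(\alpha))$, being smooth in $(\alpha,\beta)$, is smooth in $(t,x)$. The \emph{resp.} case in (B2)--(B4) is handled symmetrically, using $\partial x/\partial\alpha=-\Lambda_{-}(\alpha)/(\Lambda_{+}(\alpha)-\Lambda_{-}(\alpha))=0$ on the flat segment of $\Lambda_{-}$. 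The main obstacle is the rigorous verification that $S$ is the \emph{entire} singular set, which requires controlling the global signs of $\Lambda_{\pm}$ beyond the purely local data at $\alpha_{0}$ and $\beta_{0}$; as in Lemma 6.1, this is supplied by the prescribed shape of the initial curves in Figure 8.
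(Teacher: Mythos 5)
Your proposal is correct and follows essentially the same route as the paper: identify the singular set as the locus $\Lambda_{-}(\alpha)=\Lambda_{+}(\beta)$, reduce it to $\{\alpha_{0}\}\times[\beta_{0},\widehat{\beta}]$ by the sign argument of Lemma 6.1, and push it forward under $\Pi$ using the flatness of $\Lambda_{+}$ on $[\beta_{0},\widehat{\beta}]$ to get $x\equiv x_{0}$ and $t$ sweeping $[t_{0},\widehat{t}]$. The only cosmetic difference is that you obtain $x\equiv x_{0}$ by showing $\partial x/\partial\beta=\Lambda_{+}(\beta)/(\Lambda_{+}(\beta)-\Lambda_{-}(\beta))=0$, whereas the paper evaluates the integral in (2.11) directly; these are equivalent, and your added remarks on smoothness off $L$ and on the global sign conditions implicit in Figure 8 only make the argument more complete.
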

\begin{proof} By (4.21), $\rho=\infty$ if and only if $\Lambda_{+}(\beta)=\Lambda_{-}(\alpha),$ on the other hand, by (2.10)-(2.11), (3.12)-(3.13) and the assumptions ($B1$)-($B4$), when $\beta_{0}\leq\beta\leq\widehat{\beta}$,
\begin{eqnarray}
x&=&\frac{1}{2}\left\{\alpha_{0}+\beta+\int_{\alpha_{0}}^{\beta}\frac{\Lambda_{+}(\zeta)+\Lambda_{-}(\zeta)}{\Lambda_{+}(\zeta)-\Lambda_{-}(\zeta)}d\zeta\right\}\nonumber\\
&=&\frac{1}{2}\left\{\alpha_{0}+\beta_{0}+\beta-\beta_{0}+\int_{\alpha_{0}}^{\beta_{0}}\frac{\Lambda_{+}(\zeta)+\Lambda_{-}(\zeta)}{\Lambda_{+}(\zeta)-\Lambda_{-}(\zeta)}d\zeta
+\int_{\beta_{0}}^{\beta}\frac{\Lambda_{+}(\zeta)+\Lambda_{-}(\zeta)}{\Lambda_{+}(\zeta)-\Lambda_{-}(\zeta)}d\zeta\right\}\nonumber\\
&=&\frac{1}{2}\left\{2x_{0}+\beta-\beta_{0}+\int_{\beta_{0}}^{\beta}(-1)d\zeta\right\}=x_{0}\nonumber
\end{eqnarray}
and
\begin{eqnarray}
t&=&\int_{\alpha_{0}}^{\beta}\frac{1}{\Lambda_{+}(\zeta)-\Lambda_{-}(\zeta)}d\zeta\nonumber\\
&=&
\int_{\alpha_{0}}^{\beta_{0}}\frac{1}{\Lambda_{+}(\zeta)-\Lambda_{-}(\zeta)}d\zeta+
\int_{\beta_{0}}^{\beta}\frac{1}{\Lambda_{+}(\zeta)-\Lambda_{-}(\zeta)}d\zeta\nonumber\\
&=&t_{0}+
\int_{\beta_{0}}^{\beta}\frac{1}{\Lambda_{+}(\zeta)-\Lambda_{-}(\zeta)}d\zeta\geq t_{0}.\nonumber
\end{eqnarray}
Moreover, since $\beta\leq\widehat{\beta}$, we have $t\leq\widehat{t}$.
So, the mapping $\Pi$ maps the curve $\Lambda_{-}(\alpha)=\Lambda_{+}(\beta)$ into $L$ and $(t_{0},x_{0})$ is the blowup point. Passing through any point $(t,x)\in L^{c}$, there exist only two characteristics which intersect the $x$-axis at $\alpha$ and $\beta$ with $\alpha<\beta$ and satisfying  $$\Lambda_{-}(\alpha)\neq\Lambda_{+}(\beta),$$ i.e., $\rho<\infty.$ Thus, the proof of Lemma 6.4 is completed.
\end{proof}
\begin{remark}
The characteristics under the assumptions ($B1$)-($B4$) can be depicted as follows: the characteristics $x^{-}$ and $x^{+}$ passing through $(0,\alpha_{0})$ and $(0,\beta_{0})$, respectively, are tangent at $(t_{0},x_{0})$, and then they turn away from each other when $t>t_{0}$. $L$ is the envelope of the characteristics passing through the points $(0,\alpha_{0})$ and $(0,\beta)$ in which $\beta_{0}\leq\beta\leq\widehat{\beta}$ (see Figure 9).
\end{remark}
\begin{figure}[h]
\centering
\includegraphics[trim= 0 0 0 0, scale=0.80]{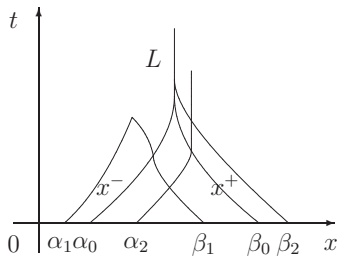}
\caption{A sketch of the characteristics under the assumptions ($B1$)-($B4$).}
\end{figure}

\begin{lemma}
Under the assumptions ($B1$)-($B4$), in the neighborhood of $(t,x_{0})\in L$ defined in Lemma 6.4, it holds that
$$
\widetilde{\alpha}=V_{1}\widetilde{x}^{\frac{1}{3}},
$$
$$
\Lambda_{+}(\beta)-\Lambda_{-}(\alpha)=V_{2}\widetilde{x}^{\frac{2}{3}},
$$
$$
\Lambda_{+}(\beta)+\Lambda_{-}(\alpha)=-V_{2}\widetilde{x}^{\frac{2}{3}}.
$$
Where $V_{i}\,(i=1,2)$ are constants depending only on the initial data at $\alpha_{0}$.
\end{lemma}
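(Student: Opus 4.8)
The plan is to exploit the two special features created by (B1)--(B4): that $\Lambda_+$ vanishes identically on the flat interval $[\beta_0,\widehat\beta]$, and that $\Lambda_-$ has a nondegenerate critical zero at $\alpha_0$, i.e. $\Lambda_-(\alpha_0)=\Lambda_-'(\alpha_0)=0$, $\Lambda_-''(\alpha_0)<0$, while $\Lambda_+(\alpha_0)>0$ by (B1). The whole lemma should then fall out of the representation (2.11) by a direct expansion, \emph{without} the three-regime case split needed in Lemmas 4.2--4.3 and 6.3. Fix an interior point $(t,x_0)\in L$ with $t_0<t<\widehat t$; by the computation in the proof of Lemma 6.4 its characteristic parameters are $(\alpha_0,\beta^\ast)$ with $\beta^\ast\in(\beta_0,\widehat\beta)$ and $x(\alpha_0,\beta^\ast)=x_0$. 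A nearby point $(t,x)$ has parameters $(\alpha,\beta)=(\alpha_0+\widetilde\alpha,\beta^\ast+\widetilde\beta)$, and for $(t,x)$ close enough $\beta$ still lies in the open flat interval.

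First I would record the behaviour of the integrand $g(\zeta)=\frac{\Lambda_+(\zeta)+\Lambda_-(\zeta)}{\Lambda_+(\zeta)-\Lambda_-(\zeta)}$ in (2.11) on the two relevant pieces. On the flat interval $\Lambda_+\equiv0$, and by (B1) $\Lambda_-(\zeta)<\Lambda_+(\zeta)=0$ there, so $g(\zeta)=\Lambda_-(\zeta)/(-\Lambda_-(\zeta))=-1$ identically (not merely to leading order). Near $\alpha_0$ I write $g=1+2\Lambda_-/(\Lambda_+-\Lambda_-)$; using $\Lambda_-(\alpha_0)=\Lambda_-'(\alpha_0)=0$ gives $g(\zeta)=1+\frac{\Lambda_-''(\alpha_0)}{\Lambda_+(\alpha_0)}(\zeta-\alpha_0)^2+O((\zeta-\alpha_0)^3)$, the linear term being absent precisely because $\Lambda_-'(\alpha_0)=0$.

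Then I would subtract $x(\alpha_0,\beta^\ast)=x_0$ from (2.11) and split $\int_\alpha^\beta g-\int_{\alpha_0}^{\beta^\ast}g=-\int_{\alpha_0}^{\alpha}g+\int_{\beta^\ast}^{\beta}g$. Since $g\equiv-1$ on the flat interval, $\int_{\beta^\ast}^{\beta}g\,d\zeta=-\widetilde\beta$, which exactly cancels the explicit $+\widetilde\beta$ in (2.11); and the expansion of $g$ near $\alpha_0$ yields $\int_{\alpha_0}^{\alpha}g\,d\zeta=\widetilde\alpha+\frac{\Lambda_-''(\alpha_0)}{3\Lambda_+(\alpha_0)}\widetilde\alpha^3+O(\widetilde\alpha^4)$. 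Combining, all $\widetilde\beta$ dependence drops and $\widetilde x=-\frac{\Lambda_-''(\alpha_0)}{6\Lambda_+(\alpha_0)}\widetilde\alpha^3+O(\widetilde\alpha^4)$, with strictly positive leading coefficient since $\Lambda_-''(\alpha_0)<0<\Lambda_+(\alpha_0)$. Inverting with the real cube root gives $\widetilde\alpha=V_1\widetilde x^{1/3}$, $V_1=(-6\Lambda_+(\alpha_0)/\Lambda_-''(\alpha_0))^{1/3}$. Finally, using $\Lambda_+(\beta)=0$ on the flat interval together with $\Lambda_-(\alpha)=\tfrac12\Lambda_-''(\alpha_0)\widetilde\alpha^2+O(\widetilde\alpha^3)$, I obtain $\Lambda_+(\beta)-\Lambda_-(\alpha)=-\tfrac12\Lambda_-''(\alpha_0)V_1^2\,\widetilde x^{2/3}\triangleq V_2\widetilde x^{2/3}$ with $V_2>0$, and $\Lambda_+(\beta)+\Lambda_-(\alpha)=\Lambda_-(\alpha)=-(\Lambda_+(\beta)-\Lambda_-(\alpha))=-V_2\widetilde x^{2/3}$; both $V_1,V_2$ depend only on $\Lambda_-''(\alpha_0)$ and $\Lambda_+(\alpha_0)$, i.e. on the data at $\alpha_0$.

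The step I expect to be the crux is the \emph{exact} cancellation of the $\widetilde\beta$ terms: it hinges on $\Lambda_+\equiv0$ forcing $g\equiv-1$ throughout the flat interval, so that $\widetilde x$ becomes a function of $\widetilde\alpha$ alone. This is exactly what produces one uniform asymptotic formula instead of a regime split, because $\widetilde t$ never competes with $\widetilde x$ at leading order along $L$. The only additional care is to ensure $\beta$ stays inside the open interval $(\beta_0,\widehat\beta)$, which holds for interior points of $L$ and for $(t,x)$ sufficiently close, so that $g\equiv-1$ is available on the whole relevant $\beta$-range.
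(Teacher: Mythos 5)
Your proposal is correct, and it follows the same basic strategy as the paper: expand the representation (2.11) of $x$ in the characteristic parameters, use the degeneracy assumptions (B2)--(B4) to kill all $\widetilde{\beta}$-dependence and the linear and quadratic terms in $\widetilde{\alpha}$, reduce to a pure cubic $\widetilde{x}\sim\mathrm{const}\cdot\widetilde{\alpha}^{3}$, invert by a real cube root, and then read off $\Lambda_{\pm}$ from $\Lambda_{+}(\beta)=0$ and the second-order Taylor expansion of $\Lambda_{-}$ at $\alpha_{0}$. There are, however, two points where your route genuinely differs from (and improves on) the paper's. First, the paper simply plugs (B1)--(B4) into the Taylor expansion (4.1), which is an expansion about $(\alpha_{0},\beta_{0})$ and hence only legitimate for $\widetilde{\beta}=\beta-\beta_{0}$ small, i.e.\ near the endpoint $(t_{0},x_{0})$ of $L$; for an interior point $(t,x_{0})$ with $t_{0}<t<\widehat{t}$ the relevant $\beta$ is near some $\beta^{\ast}\in(\beta_{0},\widehat{\beta})$, not near $\beta_{0}$. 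Your use of the exact identity $\frac{\Lambda_{+}+\Lambda_{-}}{\Lambda_{+}-\Lambda_{-}}\equiv-1$ on the whole flat interval, so that the $\widetilde{\beta}$-contribution cancels identically rather than to finite Taylor order, is what actually justifies the statement ``in the neighborhood of $(t,x_{0})\in L$'' for every interior point of $L$; this is the crux you correctly identified. Second, your leading coefficient $\widetilde{x}=-\frac{\Lambda_{-}''(\alpha_{0})}{6\Lambda_{+}(\alpha_{0})}\widetilde{\alpha}^{3}+O(\widetilde{\alpha}^{4})$ disagrees with the paper's $-\frac{2\Lambda_{-}''(\alpha_{0})-\Lambda_{+}''(\alpha_{0})}{6\Lambda_{+}(\alpha_{0})}\widetilde{\alpha}^{3}$; yours is the correct one. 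The discrepancy traces to an apparent slip in the $\widetilde{\alpha}^{3}$ coefficient of (4.1), where the term $\frac{\Lambda_{+}(\alpha_{0})(\Lambda_{+}''(\alpha_{0})-\Lambda_{-}''(\alpha_{0}))}{(\Lambda_{+}(\alpha_{0})-\Lambda_{-}(\alpha_{0}))^{2}}$ should carry $\Lambda_{-}(\alpha_{0})$ (which vanishes here) rather than $\Lambda_{+}(\alpha_{0})$ --- compare the correctly written companion $\widetilde{\beta}^{3}$ term and the coefficient $B_{3}$ in (4.8)--(4.11), both consistent with your value. Since the lemma only asserts that $V_{1},V_{2}$ are nonzero constants determined by the data at $\alpha_{0}$, neither difference affects its validity, but your version is the one I would keep.
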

\begin{proof}By the assumptions ($B1$)-($B4$) and (4.2), we have
$$
\widetilde{x}=-\frac{1}{6}\left(\frac{2\Lambda_{-}^{''}(\alpha_{0})-\Lambda_{+}^{''}(\alpha_{0})}{\Lambda_{+}(\alpha_{0})}\right)\widetilde{\alpha}^{3}.
$$
So
 $$
 \widetilde{\alpha}=\sqrt[3]{-6\frac{\Lambda_{+}(\alpha_{0})}{2\Lambda_{-}^{''}(\alpha_{0})-\Lambda_{+}^{''}(\alpha_{0})}}\widetilde{x}^{\frac{1}{3}}\triangleq V_{1}\widetilde{x}^{\frac{1}{3}}.
$$
On the other hand
$$
\Lambda_{+}(\beta)-\Lambda_{-}(\alpha)=-\frac{\Lambda_{-}^{''}(\alpha_{0})}{2}\widetilde{\alpha}^{2}\triangleq V_{2}\widetilde{x}^{\frac{2}{3}}
$$
and
$$
\Lambda_{+}(\beta)+\Lambda_{-}(\alpha)=\frac{\Lambda_{-}^{''}(\alpha_{0})}{2}\widetilde{\alpha}^{2}\triangleq -V_{2}\widetilde{x}^{\frac{2}{3}}.
$$
Thus, the lemma is proved.
\end{proof}

By the above two lemmas, we are ready to prove Theorem 6.2\\
\textbf{The proof of Theorem 6.2}.

By Lemma 6.5,
$$|\rho|\leq 2V_{2}|\widetilde{x}|^{-\frac{2}{3}},\:|\rho u|\leq 2\mu\:\text{ and}\:
|\rho u^{2}|\leq 2\mu V_{2}|\widetilde{x}|^{\frac{2}{3}},\;\text{in}\:\widehat{D}(\epsilon),$$
where
$$
\widehat{D}(\epsilon)=\{(t,x)|\,t_{0}-\epsilon\leq t\leq \widehat{t}+\epsilon,\;|x-x_{0}|\leq\epsilon\}.
$$
Obviously, the singular integral (5.3) in $\widehat{D}(\epsilon)$ is convergent.
By the same method as Theorem 6.1, for arbitrary $\phi\in C_{0}^{1}(\mathbb{R^{+}}\times\mathbb{R})$, we have
\begin{eqnarray}
& &\iint\limits_{t\geq0} (\rho\phi_{t}+\rho u\phi_{x}) dxdt+\int_{t=0}\rho_{0}\phi dx\nonumber\\
&=&\int_{t_{0}-\epsilon}^{\widehat{t}+\epsilon}\int_{x_{0}-\epsilon}^{x_{0}+\epsilon}(\rho\phi_{t}+\rho u\phi_{x})dx dt+\int_{t_{0}-\epsilon}^{\widehat{t}+\epsilon}(\rho u\phi(x_{0}+\epsilon)-\rho u\phi(x_{0}-\epsilon))dt\nonumber\\
&\triangleq&N_{1}+N_{2}.\nonumber
\end{eqnarray}
When $\epsilon\rightarrow0$, by (5.3) we have $$N_{1}\rightarrow0.$$  For $N_{2}$,
since $|\rho u|\leq 2\mu$, $|\widehat{t}-t_{0}|<\infty$ and
$$\lim\limits_{\epsilon\rightarrow0}(\rho u(x_{0}-\epsilon)-\rho u(x_{0}+\epsilon))=(-\mu-(-\mu))=0,$$
by the dominated convergence theorem,
 $$N_{2}\rightarrow0.$$

Similarly, when $\epsilon\rightarrow0$, we have
\begin{eqnarray}
& &\iint\limits_{t\geq0} (\rho u\phi_{t}+(\rho u^{2}+p)\phi_{x}) dxdt+\int_{t=0}\rho_{0}u_{0}\phi dx\rightarrow0\nonumber
\end{eqnarray}
Thus, the theorem is proved.

\textbf{6.3. Delta-like solution with line-shape singularity: Type II.}

This section is a continuation of the previous subsection 6.2. The difference is the assumptions on initial data. Here we assume

Assumption ($C1$):
$$
\Lambda_{-}(x)<\Lambda_{+}(x),\quad \forall\: x\in \mathbb{R};
\eqno(6.11)
$$
Furthermore, we suppose that there exist $\alpha_{0}$,\:$\beta_{0}$ and $\alpha_{0}<\beta_{0}$ satisfying

Assumption ($C2$):
 $$
\Lambda_{-}(\alpha)=\Lambda_{+}(\beta)=0,\quad \forall\, \widehat{\alpha}\leq\alpha\:\leq\alpha_{0},\quad \forall\:\beta_{0}\leq\beta\leq\widehat{\beta};
\eqno(6.12)
$$

Assumption ($C3$): $$
\Lambda^{'}_{-}(\alpha_{0})=\Lambda^{'}_{+}(\beta_{0})=0;
\eqno(6.13)
$$

Assumption ($C4$): $$
\Lambda^{''}_{-}(\alpha_{0})=0,\quad \Lambda^{''}_{+}(\beta_{0})=0.
\eqno(6.14)
$$
$\Lambda_{\pm}(x)$ satisfying assumptions ($C1$)-($C4$) can be shown in Figure 10.
\begin{figure}[h]
\centering
\includegraphics[trim=0 0 0 0, scale=0.80]{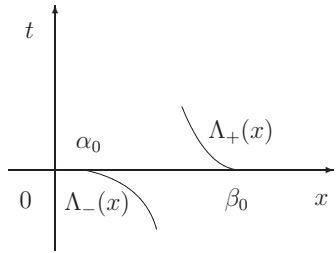}
\caption{A sketch of $\Lambda_{\pm}(x)$ satisfying the assumptions ($C1$)-($C4$).}
\end{figure}

By the same method as Lemma 6.4, we have the following lemma.
\begin{lemma}
Under the assumptions ($C1$)-($C4$), the singularities form a line $L$ which is defined as Lemma 6.4. The density $\rho$ is infinite on the line $L$, while is finite and smooth off the line.
\end{lemma}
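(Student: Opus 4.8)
The plan is to follow the proof of Lemma 6.4 almost verbatim, the single new feature being that under $(C1)$--$(C4)$ \emph{both} Riemann invariants carry a flat zero segment, so the preimage of the singular set is a two-dimensional rectangle rather than a one-dimensional curve. First, by (4.21) we have $\rho=\frac{2\mu}{\Lambda_{+}(\beta)-\Lambda_{-}(\alpha)}$, so $\rho=\infty$ precisely when $\Lambda_{-}(\alpha)=\Lambda_{+}(\beta)$. Using the sign pattern forced by (6.11)--(6.14) exactly as in Lemma 6.4 (cf.\ Lemma 6.1), I would first show that a pair $(\alpha,\beta)$ with $\alpha<\beta$ can satisfy $\Lambda_{-}(\alpha)=\Lambda_{+}(\beta)$ only if the common value is $0$; equivalently, the singular set in the $(\alpha,\beta)$-plane is exactly the rectangle $R=\{(\alpha,\beta)\mid \widehat{\alpha}\le\alpha\le\alpha_{0},\ \beta_{0}\le\beta\le\widehat{\beta}\}$, on which $\Lambda_{-}(\alpha)=\Lambda_{+}(\beta)=0$.

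The heart of the argument is then to compute the image $\Pi(R)$ using (2.10)--(2.11), splitting each integral $\int_{\alpha}^{\beta}$ at the interior points $\alpha_{0}$ and $\beta_{0}$. On $[\widehat{\alpha},\alpha_{0}]$ one has $\Lambda_{-}\equiv0$, so $\Lambda_{+}>0$ by (6.11) and the $x$-integrand $\frac{\Lambda_{+}+\Lambda_{-}}{\Lambda_{+}-\Lambda_{-}}$ equals $+1$; on $[\beta_{0},\widehat{\beta}]$ one has $\Lambda_{+}\equiv0$, so $\Lambda_{-}<0$ and the same integrand equals $-1$. The resulting contributions $(\alpha_{0}-\alpha)$ and $-(\beta-\beta_{0})$ cancel the free terms $\alpha$ and $\beta$ in $\tfrac12\{\alpha+\beta+\cdots\}$, and the remaining middle piece reconstructs $x_{0}$ via (3.13); hence $x(\alpha,\beta)\equiv x_{0}$ on all of $R$, the very cancellation already exploited in Lemma 6.4. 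For the time coordinate, the middle integral $\int_{\alpha_{0}}^{\beta_{0}}\frac{d\zeta}{\Lambda_{+}-\Lambda_{-}}$ equals $t_{0}$ by (3.12), while the two outer integrals are strictly positive because $\frac{1}{\Lambda_{+}-\Lambda_{-}}$ reduces to $\frac{1}{\Lambda_{+}}>0$ and to $\frac{1}{-\Lambda_{-}}>0$ on the two flat segments; thus $t(\alpha,\beta)\ge t_{0}$, and as $(\alpha,\beta)$ runs over $R$ the value of $t$ increases from $t_{0}$, attained at the cusp $(\alpha_{0},\beta_{0})$, up to its value at the far corner $(\widehat{\alpha},\widehat{\beta})$. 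Consequently $\Pi$ collapses the whole rectangle onto the vertical segment $L=\{(t,x)\mid x=x_{0},\ t_{0}\le t\le\widehat{t}\,\}$ of Lemma 6.4, on which $\rho=\infty$.

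Finally, for any $(t,x)\in L^{c}$ the two characteristics through it meet the initial line at parameters $\alpha<\beta$ lying off $R$, so $\Lambda_{-}(\alpha)\neq\Lambda_{+}(\beta)$ and (4.21) gives $\rho<\infty$; smoothness of $\rho$ there follows from the smoothness of $\Lambda_{\pm}$ together with the nondegeneracy of $\Pi$ away from the singular set, exactly as in the classical characteristic construction and in Lemma 6.4. I expect the main obstacle to be the middle step: confirming that the genuinely two-dimensional preimage $R$ degenerates to a single line, i.e.\ that $x(\alpha,\beta)$ is \emph{identically} $x_{0}$ throughout $R$ while $t$ fills out a full interval. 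This rests entirely on the simultaneous $+1/-1$ cancellation produced by the two flat segments, which is precisely the feature distinguishing this Type~II situation from the single-segment case treated in Lemma 6.4.
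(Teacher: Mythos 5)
Your argument is correct and is exactly the method the paper intends: the paper gives no separate proof of this lemma (it says only ``by the same method as Lemma 6.4''), and your write-up fills in precisely that computation --- the singular preimage is now the rectangle $[\widehat{\alpha},\alpha_{0}]\times[\beta_{0},\widehat{\beta}]$, the simultaneous $+1/-1$ cancellation on the two flat segments forces $x(\alpha,\beta)\equiv x_{0}$ there, and $t$ sweeps out an interval starting at $t_{0}$. The one point worth noting is that the top of the segment is $t(\widehat{\alpha},\widehat{\beta})=\int_{\widehat{\alpha}}^{\widehat{\beta}}\frac{d\zeta}{\Lambda_{+}(\zeta)-\Lambda_{-}(\zeta)}$, which strictly exceeds the $\widehat{t}$ of subsection 6.2 whenever $\widehat{\alpha}<\alpha_{0}$, so ``$L$ as defined in Lemma 6.4'' must be read with this adjusted endpoint; your identification of the far corner as giving the top of $L$ is the correct reading.
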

\begin{remark}
Under the assumptions ($C1$)-($C4$), the characteristics can be shown in Figure 11. The difference between the assumptions ($B1$)-($B4$) and the assumptions ($C1$)-($C4$) is that once the characteristic touch at the line $L$, they will remain in contact.
\end{remark}
\begin{figure}[h]
\centering
\includegraphics[trim=0 0 0 0, scale=0.80]{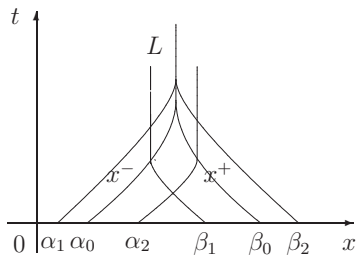}
\caption{A sketch of the characteristics under the assumptions ($C1$)-($C4$).}
\end{figure}

\begin{remark}
On the line $L$ the solution constructed by the method of characteristics satisfies $u^{+}=u^{-}=0$, i.e., it satisfies (5.5) under the assumptions ($B1$)-($B4$) and ($C1$)-($C4$).
\end{remark}
\section{conclusion}
In this paper, we study the behavior of one-dimensional Chaplygin gas. In particular, we analyze the formation of singularities for such a system.
We show that these singularities are very different from the traditional formation of singularities, such as in the case of shock wave formation. We call these new type singularities ``Delta-like'' singularities since the densities become infinite at the singularities. Depending on the initial conditions, different types of Delta-like singularities can form, such as Delta-like solution with point-shape singularity and Delta-like solution with line-shape singularity (including Type I and II). For convenience, we assume that the initial data only leads to the formation of one Delta-like singularity. It is straight forward to generalize this to the cases which allow many Delta-like singularities to form. More specially, we can generalize the assumptions (A2)-(A4) to the case:
there exist numerous $\alpha_{i}$, $\beta_{j}$ $(i,j=0,1,2,\cdots,n)$ satisfying

Assumptions (A2'):
$$
\Lambda_{-}(\alpha_{i})=\Lambda_{+}(\beta_{j})=0;
\eqno(7.1)
$$

Assumptions (A3')
$$
\Lambda^{'}_{-}(\alpha_{i})=\Lambda^{'}_{+}(\beta_{j})=0;
\eqno(7.2)
$$

Assumptions (A4')
$$
\Lambda^{''}_{-}(\alpha_{i})<0,\quad \Lambda^{''}_{+}(\beta_{j})>0.
\eqno(7.3)
$$
Under the assumptions (A1), (A2')-(A4'), there are numerous Delta-like solution with point-shape singularity and the theory of subsection 6.1 holds in this case.

We have studied the formation of singularities under specific assumptions on initial data in this paper. We believe that other initial data could lead to more complicated and more interesting phenomena of singularity formation. However, the study of such possibility is beyond the scope of this paper.

For the more interesting and complicated cusp-type singularity, we will construct the Delta-like solution in the forthcoming paper.
\vskip 4mm
\noindent{\Large {\bf Acknowledgements.}} Kong and Wei thank A. Bressan for helpful discussion.
The work of Kong and Wei was supported in
part by the NNSF of China (Grant No.: 11271323), Zhejiang Provincial Natural Science Foundation of China (Grant No.: Z13A010002) and a National Science and Technology Project during the twelfth five-year plan of China (2012BAI10B04).
The work of Zhang was supported by the Research Grant Council of HKSAR (Grant No.: CityU 103509).

\end{document}